\DeclarePairedDelimiter{\ceil}{\lceil}{\rceil}
\newtheorem{theorem}{Theorem}[section]
\newtheorem{lemma}[theorem]{Lemma}
\newtheorem{proposition}[theorem]{Proposition}
\newtheorem{problem}[theorem]{Problem}
\newtheorem{definition}[theorem]{Definition}
\newtheorem{assumption}[theorem]{Assumption}
\newtheorem{remark}[theorem]{Remark}
\newtheorem{example}[theorem]{Example}
\begin{document}
	
	\title{Co-design of Safe and Efficient Networked Control Systems in Factory Automation \replaced{with State-dependent Wireless Fading Channels}{: A Constrained Cooperative Game Approach}}
	
	\author{Bin Hu, Yebin Wang, Philip~Orlik, Toshiaki~Koike-Akino and Jianlin~Guo
	
	\thanks{Yebin Wang, Philip Orlik, Toshiaki Koike-Akino and Jianlin Guo are with Mitsubishi Electric Research Laboratories~(MERL), Cambridge, MA 02139, USA. {\tt\small yebinwang, porlik, koike,  guo@merl.com}}
	\thanks{This work was performed during Bin Hu's internship at MERL. {\tt\small bhu@odu.edu}}}
	
	\maketitle
	
	\begin{abstract}
In factory automation, heterogeneous manufacturing processes need to be coordinated over wireless networks to achieve safety and efficiency. These wireless networks, however, are inherently unreliable due to \emph{shadow fading} induced by the physical motion of the machinery. To assure both safety and efficiency, this paper proposes a state-dependent channel model that captures the interaction between the physical and communication systems. By adopting this channel model, sufficient conditions on the maximum allowable transmission interval are then derived to ensure \emph{stochastic safety} for a nonlinear physical system controlled over a state-dependent wireless fading channel. Under these sufficient conditions, the safety and efficiency co-design problem is formulated as a constrained cooperative game, whose equilibria represent optimal control and transmission power policies that minimize a discounted joint-cost in an infinite horizon. This paper shows that the equilibria of the constrained game are solutions to a non-convex generalized geometric program, which are approximated by solving two convex programs. The optimality gap is quantified as a function of the size of the approximation region in convex programs, and asymptotically converges to zero by adopting a branch-bound algorithm. Simulation results of a networked robotic arm and a forklift truck are presented to verify the proposed co-design method.
	\end{abstract}


\begin{IEEEkeywords}
Co-design method, shadow fading, stochastic safety, factory automation, networked control system.
\end{IEEEkeywords}
	
	\IEEEpeerreviewmaketitle
	
	\section{Introduction}
\subsection{Background and Motivation}
\IEEEPARstart{F}{actory} Automation Networks (FANs) are Cyber-Physical Systems (CPS)  consisting of numerous heterogeneous manufacturing processes that coordinate with each other by exchanging information over wireless networks \cite{zhuang2007wireless, rajhans2014supporting,de2006use}. FANs have received considerable attention due to the rapid development of wireless communication technologies, which provides efficient and cost-effective service such as increased mobility, easy scalability and maintenance for applications like automated assembly systems in manufacturing factories \cite{groover2007automation}. In many safety-critical applications, safety is always of primary concern in FANs. However, building safe and efficient FANs is challenging in two aspects. First, from a system modeling standpoint, the heterogeneous nature of FANs requires a hybrid framework that can capture system dynamics in different levels as well as their mutual interactions. Assessing the performance and safety of this ``hybrid'' system as a whole demands different modeling and analysis tools. Secondly, the wireless network in FANs is inherently unreliable due to channel fading \cite{islam2012wireless, de2006use} or interference \cite{tse2005fundamentals} caused by internal system states or external environments, such as obstacles or physical motions of machinery. The fading channel inevitably results in a severe drop in the network's quality of service (QoS) and thereby introduces a great deal of stochastic uncertainties in FANs that may cause serious safety issues. The objective of this paper is to develop a co-design paradigm for communication and control systems under which a certain level of safety and efficiency can be achieved for FANs in the presence of \emph{shadow fading}.

Assuring safety for FANs often requires joint coordination from heterogeneous systems which may have different objectives. Such a coordination is necessary due to the interactions among the heterogeneous systems. \added{Such interactions exist in many industrial applications, to name a few,  manufacturing systems with heavy facilities mills and cranes discussed in \cite{agrawal2014long}, sensor network with moving robots \cite{quevedo2013state} and indoor wireless networks with moving human bodies \cite{kashiwagi2010time}.} One typical example in FANs \deleted{shown in Figure \ref{fig: hsf}} is an assembly process where an autonomous assembly arm and a forklift truck collaborate to assemble products. On the one hand, the control objective of an autonomous assembly arm is to track a specified trajectory by exchanging information between a physical plant and a remote controller via wireless networks. On the other hand, the objective of the forklift system is often related to accomplishing some high-level tasks, such as transporting assembled products from one workstation to another. These physically separated systems, however, may have strong cyber-physical couplings. The cyber-physical couplings in the systems of networked assembly arm and forklift trucks comes from the fact that the physical motion of forklift vehicle may lead to serious \emph{shadow fading} in the wireless network that is used by the assembly arm, thereby significantly affecting the system stability and performance. Thus, to ensure system safety for FANs, one must explicitly examine such cyber-physical couplings in communication channels. 

The channel model that is used to characterize the \emph{shadow fading} in FANs, must be carefully examined. As a type of channel fading, shadow fading is often characterized in terms of the channel gain.  Traditionally, the channel gains are modeled either as \emph{independent identical distributed}~(i.i.d.) random processes \cite{tse2005fundamentals, gatsis2014optimal, tatikonda2004control, elia2005remote} with assumed distributions such as Rayleigh, Rician and Weibull or as Markov chains \cite{zhang1999finite, wang1995finite}. These channel models are inadequate to characterize the cyber-physical couplings in FANs due to the fact that the network state is assumed to be independent from physical states in either i.i.d. or Markov chain models. With such independency, control and communication could be considered separately through the application of a separation principle \cite{gatsis2014optimal}. This separation-principle, may be valid for networked system where the network states are independent of physical dynamics, but is clearly inappropriate for FANs where the channel state is functionally dependent on the physical states. This dependency of channel states on physical states motivates the development of a new co-design paradigm under which the communication and control policies are coordinated to achieve both system safety and efficiency.  
\subsection{Related Work}
The example of an assembly process \replaced{as well as the research work in \cite{agrawal2014long, quevedo2013state,kashiwagi2010time,agrawal2009correlated,leong2016network} have}{shown in Figure \ref{fig: hsf} has} demonstrated the importance of considering the cyber-physical couplings between communication and control systems in assuring system safety and efficiency for FANs. Similar conclusions have also \replaced{been}{be} made in prior work \cite{hu2013using, hu2015distributed} where the dependency of channel states on physical states is used in the design of distributed switching control strategy to assure vehicle safety in vehicular networked systems. This paper expands the results in \cite{hu2015distributed} to show that both system safety and efficiency can be achieved via a novel co-design framework. Other than these papers, we are aware of no other work formally analyzing both the system safety and efficiency \emph{in the presence of  such cyber-physical couplings}. There is, however, a great deal of related work on the co-design of communication and control systems assuming the channel states are independent of physical states. We will review these results and discuss their relationships to the work in this paper. 

From a communication perspective, the impact of channel fading on the system performance can be mitigated by increasing the transmission power. This observation motivates much research on the design of optimal power strategy to achieve various objectives in both communication \cite{caire1999optimum, goldsmith1997variable} and control communities \cite{gatsis2014optimal, quevedo2013state, quevedo2014power}. The objective of power control in the communication community mainly focuses on improving the communication reliability and performance in an average or asymptotic sense. In \cite{caire1999optimum, goldsmith1997variable} and relevant references therein, an adaptive power strategy combined with adaptive data-rate strategies was developed to achieve Shannon limit for fading channels. The optimal power strategy was shown to be a function of the channel gain. 

The objective of power control in the control community, however, is more concerned with how the communication quality affects the system stability and performance.  As shown in \cite{tatikondacontrol, nair2007feedback}, such impact is often related to the unstable modes of the dynamics in physical systems and the QoS that could be delivered by a given wireless network. The power control strategy in networked control systems is often designed to ensure a certain level of QoS under which the closed-loop system is stable. In \cite{quevedo2013state, quevedo2014power}, sufficient conditions on the transmission power were established to ensure exponentially bounded performance for state estimation of discrete linear time-varying systems. 

When considering a joint objective for the communication and control systems, recent work in \cite{molin2009lqg, gatsis2014optimal, di2015co, bao2011iterative} showed that the \emph{certainty equivalence property} holds for the optimal control policy while the optimal communication policy was adapted to the channel states and physical states. In particular, \cite{molin2009lqg} showed that the joint optimization of scheduling and control can be separated into the subproblems of an optimal regulator, estimator and scheduling. Similar ideas were applied to a joint design of controller and routing redundancy over a wireless network \cite{di2015co}. The work in \cite{gatsis2014optimal} considered a co-design problem for optimal control and transmission power policies for a stochastic discrete linear system controlled over a fading channel. Their results showed that the optimal control policy was a standard LQR controller while the optimal power policy was adapted to both channel and plant states. This similar structure was also discovered in a joint design problem for an optimal encoder and controller over noisy channels \cite{bao2011iterative}.

All of the above studies, however, were developed by assuming a state-independent channel model. From a safety standpoint, this state-independent channel model is often obtained by assuming the worst impact that the physical state can have on the network. As a result, the selected communication policy~(transmission power, data rate, or scheduling) may be greater than necessary to assure the same level of performance that can be obtained by using state-dependent channel model. In other words, the conservativeness on the selection of state-independent channel model may prevent the system as a whole from achieving system efficiency.
\subsection{Contribution}
Motivated by the cyber-physical couplings in heterogeneous industrial systems, this paper develops a co-design paradigm to achieve both system safety and efficiency in the presence of \emph{shadow fading}. The heterogeneous industrial systems are characterized by a nonlinear networked control system and a Markov decision process, which can represent a variety of realistic situations in industrial applications \cite{agrawal2014long, quevedo2013state,kashiwagi2010time,agrawal2009correlated,leong2016network}. Under this heterogeneous system framework, the first contribution of this paper is the proposal of a novel state-dependent fading channel model that captures the impact of the physical states on the channel state. \added{Furthermore, this paper shows that the state-dependent channel model is a Markov modulated Bernoulli process \cite{ozekici1997markov} that generalizes the traditional i.i.d. Bernoulli channel model in two important aspects: (1) the model parameters are not constants and are stochastic processes due to their dependence on a randomly changing environment; (2) the channel parameters can be controlled by taking advantage of the cyber-physical couplings between communication and control systems. }

Under the state-dependent channel model, the safety issue is examined in a stochastic setting by investigating the likelihood of the system states entering a forbidden or unsafe region. Thus, the second contribution of this paper is the sufficient condition on the maximum allowable transmission interval (MATI) under which the wireless networked system with \emph{state-dependent fading channels} is \emph{stochastically safe}. \added{We also show that the MATI derived in this paper generalizes the well known results in \cite{nevsic2004input} where the channel fading impact was not considered.  To the best of our knowledge, the sufficient
conditions presented in this paper are the first results on MATI that guarantee the stochastic safety under the \emph{state-dependent fading channels}.}

Under these safety conditions, the third contribution of this paper is the proposal of a new co-design paradigm to assure both safety and efficiency for FANs. In particular, we show that this safety-efficiency co-design can be formulated as a constrained two-player cooperative game. The equilibrium points of the constrained cooperative game represent optimal control and transmission power policies that minimize a discounted joint-cost induced by power consumption and control efforts in infinite horizon. The equilibrium of this constrained cooperative game can be obtained by solving a non-convex generalized geometric program (GGP) \cite{boyd2007tutorial, maranas1997global}. To address the non-convexity of the GGP, this paper approximates the non-convex GGP with two relaxed convex GGPs that provide upper and lower bounds on the optimal solution. These bounds are shown to asymptotically approach the global optimum by using a branch-bound algorithm. 

This paper is organized as follows. Section \ref{section: SHS} describes the system model and problem formulation. Section \ref{sec: safety} presents the sufficient conditions to ensure \emph{stochastic safety}. Under the safety conditions, Section \ref{sec: efficiency} proposes a co-design paradigm to assure both safety and efficiency. The optimal solutions for the co-design problem are provided in Section \ref{sec: GGP}. The main results are demonstrated via simulations of a mechanical robotic arm and a forklift truck in Sections \ref{sec: example}. Section \ref{sec: conclusion} concludes the paper. 

\textbf{Notations}. Throughout the paper the $n$-dimensional Euclidean vector space is denoted by $\mathbb{R}^{n}$ and the non-negative reals and integers are denoted as $\mathbb{R}_{\geq 0}$ and $\mathbb{Z}_{\geq 0}$, respectively. The infinity norms of the vector $x \in \mathbb{R}^{n}$ and the matrix $A$ are denoted by $|x|$ and $\|A\|$ respectively. The right limit value of a function $f(t)$ at time $t$ is denoted by $f(t^{+})$. Given a time interval $[t_1, t_2)$ with $t_1, t_2 > 0$, the essential supremum of a function $f(t)$ over the time interval $[t_1, t_2)$ is denoted by $|f(t)|_{[t_1, t_2)}=\text{ess}\sup_{t \in [t_1, t_2)}\|f(t)\|$ where $\|\cdot\|$ is the Euclidean norm. A function $f(t)$ is essentially ultimately bounded if $\exists M > 0$, $|f(t)|_{\mathcal{L}_{\infty}}=\text{ess}\sup_{t \geq 0}\|f(t)\| \leq M$. A function $\alpha(\cdot): \mathbb{R}_{\geq 0} \rightarrow \mathbb{R}_{\geq 0}$ is a class $\mathcal{K}$ function if it is continuous and strictly increasing, and $\alpha(0)=0$. A function $\alpha(t)$ is a class $\mathcal{K}_{\infty}$ function if it is in class $\mathcal{K}$ and radially unbounded. A function $\beta(\cdot, \cdot): \mathbb{R}_{\geq 0} \times \mathbb{R}_{\geq 0} \rightarrow \mathbb{R}_{\geq 0}$ is a class $\mathcal{KL}$ function if $\beta(\cdot, t)$ is a class $\mathcal{K}_{\infty}$ function for each fixed $t \in \mathbb{R}_{\geq 0}$ and $\beta(s, t) \rightarrow 0$ for each $s \in \mathbb{R}_{\geq 0}$ as $t \rightarrow +\infty$. \added{The function $\beta(\cdot, \cdot)$ is said to be of class Exp-$\mathcal{KL}$ if there exist $K_1, K_2 > 0$ such that $\beta(s, t)=K_1 \exp(-K_{2}t)s$. A function $\overline{\beta}(\cdot, \cdot, \cdot): \mathbb{R}_{\geq 0} \times \mathbb{R}_{\geq 0} \times \mathbb{R}_{\geq 0} \rightarrow \mathbb{R}_{\geq 0}$ is said to be of class $\mathcal{KLL}$~($\overline{\beta} \in \mathcal{KLL}$), if for each $r \geq 0$, $\overline{\beta}(\cdot, \cdot, r) \in \mathcal{KL}$ and $\overline{\beta}(\cdot, r, \cdot) \in \mathcal{KL}$.}
	\section{System Model: A Heterogeneous System Framework}
\label{section: SHS}
\added{Fig. \ref{fig: hsf} shows a heterogeneous system framework with two subsystems.  One is a networked control system ($\mathcal{G}$) that characterizes a nonlinear physical system being controlled over a wireless network. The other one is a Markov Decision Process (MDP) ($\mathcal{M}$) that models stochastic high level dynamics of a moving object in industrial systems. }
	
\added{The cyber-physical coupling within this heterogeneous framework is due to the fact that the physical states~(e.g., locations) of the moving object modeled by MDP's states may lead to \emph{shadow fading} on the wireless channel that is used by the networked control system. Such a coupling has been shown to be critical for performance guarantee in a variety of realistic situations in industrial applications, to name a few, such as robotic arms and forklift trucks, heavy facilities mills and cranes \cite{agrawal2014long}, sensor network with moving robots \cite{quevedo2013state,leong2016network} and indoor wireless networks with moving human bodies \cite{kashiwagi2010time}. Under such industrial settings, the radio channel characteristics are non-stationary and may experience abrupt changes due to the motion of the moving object. Such \emph{state-dependent} property of these wireless communications in industrial systems clearly invalidates the use of traditional co-design frameworks, such as \cite{molin2009lqg,gatsis2014optimal, zhang2006communication}, that rely on the assumption that the channel states are decoupled from the physical states. The heterogeneous system framework depicted in Fig. \ref{fig: hsf} is thus motivated by the co-design challenge under state-dependent fading channels.}

\deleted{Figure \ref{fig: hsf} shows a heterogeneous system framework with two subsystems. One is a networked control system ($\mathcal{G}$) that characterizes a physical robotic manipulator being controlled over a wireless network. The other one is a Markov Decision Process (MDP) ($\mathcal{M}$) that models the high transitions of a forklift truck. The MDP's states may lead to \emph{shadow fading} on the wireless channel that is used by the networked control system. This cyber-physical coupling is modeled by a state dependent channel model in Section \ref{sub: channel-model}.}
\begin{figure}[!t]
	\centering
	\includegraphics[scale=.25]{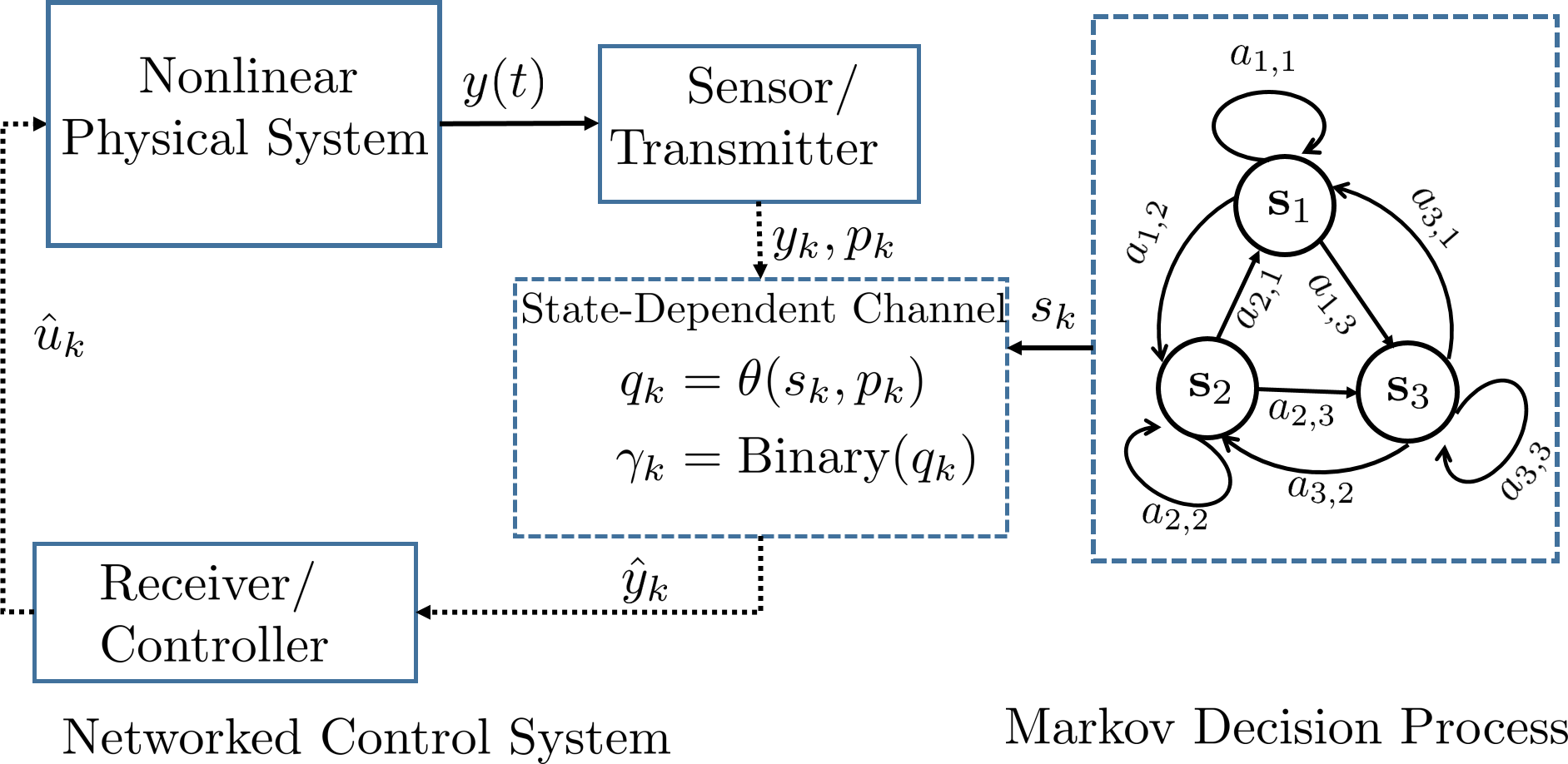}
	\caption{Heterogeneous System Framework: Networked Control System and Markov Decision Process}
	\label{fig: hsf}
\end{figure}
 \subsection{The $\mathcal{G}$ System Model}
The dynamics of the ${\mathcal G}$ system are modeled as follows,
\begin{align*}
\mathcal{G}:=\begin{cases} \dot{x}_{p}&=f_{p}(t, x_p, \hat{u}, w) \\
                                              y&=g_{p}(t, x_p)    , \quad \text{Physical Plant} \\
                                              \dot{x}_{c}&=f_{c}(t, x_{c}, \hat{y}) \\
                                              u&=g_{c}(t, x_{c}),  \quad \text{Remote Controller}. \\
\end{cases}
\end{align*}
where $x_{p} \in \mathbb{R}^{n_{x}}$ and $y \in \mathbb{R}^{n_{y}}$ are the physical states and measurements, respectively. $x_{c} \in \mathbb{R}^{n_{c}}$ and $u \in \mathbb{R}^{n_{u}}$ are the internal state and output for the remote controller, respectively. $w \in \mathbb{R}^{n_w}$ is the external disturbance that is assumed to be essentially ultimately bounded, i.e., $\exists M_w > 0$, $|w|_{\mathcal{L}_{\infty}} \leq M_w$. $f_{p}(\cdot, \cdot, \cdot, \cdot):\mathbb{R}_{\geq 0}\times \mathbb{R}^{n_{x}} \times \mathbb{R}^{n_{u}} \times \mathbb{R}^{n_w} \rightarrow \mathbb{R}^{n_x}$, $g_{p}(\cdot, \cdot): \mathbb{R}_{\geq 0} \times \mathbb{R}^{n_x} \rightarrow \mathbb{R}^{n_y}$, $f_{c}(\cdot, \cdot, \cdot):\mathbb{R}_{\geq 0}\times \mathbb{R}^{n_c} \times \mathbb{R}^{n_{u}} \rightarrow \mathbb{R}^{n_c}$ and $g_{c}(\cdot, \cdot): \mathbb{R}_{\geq 0} \times \mathbb{R}^{n_x} \rightarrow \mathbb{R}^{n_u}$ are Lipschitz functions for the physical plant and remote controller respectively.  Without loss of generality, we assume the origin is the unique equilibrium for system $\mathcal{G}$, i.e. $f_{p}(0, 0, 0, 0)=0^{n_x}, f_{c}(0, 0, 0)=0^{n_{c}}, g_{p}(0, 0)=0^{n_y}, g_{c}(0, 0)=0^{n_u}$.

Let $\{t_k\}$ denote an increasing sequence of
time instants where $t_k < t_{k+1}$ for all $k \in \mathbb{Z}_{\geq 0}$.  Let $\Omega_{p}=\{p_{i}\}_{i=1}^{M}$ be a transmission power set including $M$ power levels where $p_{i} \in \mathbb{R}_{\geq 0}$ is the power level. As shown in Figure \ref{fig: hsf}, the measurement $y$ and controller output $u$ are sampled and transmitted over an unreliable
communication channel with a selected power level $p_k \in \Omega_{p}$ at time instant $t_k$. The wireless network is subject to fading and randomly drops the sampled information at each time instant. Let $\{\gamma(k)\}$ denote a binary random process taking value from $\{0, 1\}$.  The value of the process $\gamma(k)$ at the $k$th consecutive sampling instant indicates whether or not a packet dropout has occurred. In particular,
\begin{align*}
\gamma(k)=\begin{cases} 1 & ,\quad \text{packet successfully decoded without error} \\
0 & , \quad \text{packet is dropped}. \\
\end{cases}
\end{align*}
Let $\hat{y}(t_{k})$ and $\hat{u}(t_{k})$ denote the estimates of the corresponding variables at time instant $t_{k}$. Note that we assume the time used for communication and computing control action is negligible compared to the sampling time interval and the network condition is unchanged during this small time interval. The estimation error induced by the communication during the sampling time interval $[t_{k}, t_{k+1})$ is defined as $e_{y}(t)=y(t)-\hat{y}(t_{k})$ and $e_{u}=u(t)-\hat{u}(t_{k})$. Let $e(t) = [ e_y(t); e_u(t)]^T$ denote the aggregated estimation error at time $t$. After
the information is successfully received, this aggregated estimation error will be reset to zero.  Let $t_k^+$ denote
the real time immediately after the sampling instant, $t_k$. The estimation error $e(t_k^+)$
will be reset to zero immediately after each successful transmission.  So we may formally express $e(t_k^+)$ as $e(t_{k}^{+})=(1-\gamma(k))e(t_{k})$.
Let $x:=[x_{p}; x_{c}]$ denote the aggregated state for the closed loop system $\mathcal{G}$, and then one has the following equivalent system representation in terms of $x$ and $e$,
\begin{align}
\hat{\mathcal{G}}:=\begin{cases}
\dot{x}&=f(t, x, e, w) \\
\dot{e}&=g(t,x, e, w), \forall t \in (t_{k}, t_{k+1})  \\
e(t_{k}^{+})&=(1-\gamma(k))e(t_{k}), \quad k \in \mathbb{N}^{+}.\\
\end{cases}
\label{eq: G_hat}
\end{align}
where 
\begin{align*}
f(t, x, e, w)&:=\left [\begin{array}{c}   f_{p}(t, x_{p}, g_{c}(t, x_{c})-e_{u}(t), w)  \\
                                                       f_{c}(t, x_{c}, g_{p}(t, x_p)-e_{p}(t))
 \end{array} \right] \\
 g(t, x, e, w)&:=\left [\begin{array}{c}      \frac{\partial g_{p}(x_p, t)}{\partial x_{p}}f_{p} (t, x_{p}, g_{c}(t, x_{c})-e_{u}(t), w) + \frac{\partial g_{p}(x_p, t)}{\partial t}                \\
                                                             \frac{\partial g_{c}(x_c, t)}{\partial x_{c}}f_{c}(t, x_{c}, g_{p}(t, x_{p})-e_{u}(t))+\frac{\partial g_{c}(x_c, t)}{\partial t}
 \end{array} \right].
\end{align*}
Note that  we further assume that the functions $g_{p}(\cdot, \cdot)$ and $g_{c}(\cdot, \cdot)$ are continuously differentiable and thus the function $g(\cdot, \cdot, \cdot, \cdot)$ in \eqref{eq: G_hat} is well defined. Since the (set)~stability of the system $\hat{\mathcal{G}}$ implies the (set)~stability of the system $\mathcal{G}$, we will only discuss the stability of the system $\hat{\mathcal{G}}$ in the remaining of this paper.  
\subsection{The $\mathcal{M}$ System Model}
The $\mathcal{M}$ system is modeled by an MDP process. An MDP is defined by a five tuple $\mathcal{M}=\{S, s_{0}, A, P, c\}$, where
\begin{itemize}
\item $S=\{s_{i}\}_{i=1}^{N}$ is the state space for the MDP.
\item $s_{0} \subset S$ is the set of initial states.
\item $A=\{a_{i}\}_{i=1}^{M_{a}}$ is the action set.
\item $P: S \times A \times S \rightarrow [0, 1]$ is the transition probability , i.e. $P(s_{i}, a, s_{j})={\rm Pr}\{s_{j}| a, s_{i}\}$.
\item $c: S \times A \rightarrow \mathbb{R}_{\geq 0}$ is the reward function. 
\end{itemize}

Unlike system $\mathcal{G}$ that models low level physical dynamics, the MDP process is used to model discrete-event decision making
processes managing high-level control objectives such as transporting products from one location to another with minimum time or energy. \added{The state space $S$ in the MDP system corresponds to a finite number of partitioned regions that the vehicle system, such as forklift trucks or cranes \cite{agrawal2014long} or robots \cite{quevedo2013state}, can operate by taking actions from an action set $A$.  The transition probability matrix $P$ is used to model the stochastic uncertainties caused by sensor or actuation noises when the actions are physically implemented. The costs in the MDP model are defined to characterize the high level control objectives for the vehicle system. For instance, if the control objective is to transport the products to a target region, then small costs will be assigned in the minimization optimization problem, to the situation when the vehicle is transitted to the target region.}
\subsection{State Dependent Dropout Channel Model}
\label{sub: channel-model}
As shown in Fig. \ref{fig: hsf}, the wireless channel used by the networked control system $\mathcal{G}$ is functionally dependent on the state of the MDP system. This relationship corresponds to the situation that vehicle's physical positions directly lead to \deleted{the} shadow fading, thereby generating a great deal of stochastic uncertainties in system $\mathcal{G}$. Equation (\ref{eq: G_hat}) shows that the stochastic uncertainty in system $\hat{\mathcal{G}}$ is governed by a binary random process $\{\gamma(k)\}$, which characterizes the stochastic variations in channel conditions. 

The state-dependency in the shadow fading channel is captured by a novel \emph{State-Dependent Dropout Channel} (SDDC) model that is formally defined as follows.
\begin{definition}
\label{def: SDDC}
Given a binary random process $\{\gamma(k)\}_{k=0}^{\infty}$, an MDP system $\mathcal{M}=\{S, s_{0}, A, P_{m}, c\}$ and a transmission power set $\Omega_{p}=\{p_{i}\}_{i=1}^{M}$,  the wireless channel is SDDC if 
\begin{align}
\label{eq: SDDC}
{\rm Pr}\{\gamma(k)=1 | s(k)=s, p(k)=p\} = 1-\theta(s, p), \forall s \in S, p \in \Omega_p.
\end{align}
where $\theta(s, p) \in (0, 1)$ is the outage probability \cite{tse2005fundamentals} that monotonically decreases with respect to the transmission power level $p$.
\end{definition}
\begin{remark}
	\added{
The definition of the SDDC is closely related to  the \emph{outage probability}, which is a widely used performance metric for fading channels \cite{tse2005fundamentals}. It characterizes the likelihood of the Signal-to-Noise Ratio~(SNR) \added{being} below a specified threshold $\gamma_0$, i.e. ${\rm Pr}\{\text{SNR} \leq \gamma_0\}$. The difference between the SDDC model and traditional \emph{outage probability} lies in the state-dependent feature of \eqref{eq: SDDC} where the probability is defined for each each MDP state~(partitioned region). The probability defined in \eqref{eq: SDDC} can be obtained by measuring the SNR for each MDP state, see \cite{kashiwagi2010time, agrawal2014long} and reference therein for details about the statistical methods. In practice, the transmitter can estimate the probability by either directly using the visual sensor to observe the positions of the controlled moving object, or using the estimation techniques discussed in \cite{agrawal2014long,agrawal2009correlated}. See Example \ref{example: raleigh-fading} for more details about how to construct the SDDC from the \emph{outage probability}.}
\end{remark}

\begin{remark}
	\added{
The SDDC model in \eqref{eq: SDDC} relates the channel state~(packet dropout probability) to the MDP states and transmission power levels. From a control standpoint, this correlation enables that the channel conditions can be controlled by designing different control and transmission power strategies. By using such a freedom in the channel model, this paper develops a co-design framework that coordinates control and communication strategies to achieve both safety and efficiency for the entire heterogeneous system. The co-design idea of using the state-dependent channel model distinguishes our work from other results, such as \cite{quevedo2013state, kashiwagi2010time, agrawal2014long, leong2016network} where the channel state is assumed a fixed and uncontrollable random process.}
\end{remark}

\begin{example}[SDDC model with Raleigh fading]
\label{example: raleigh-fading}
Channel fading is often the result of the superimposition of signal attenuation in both large~(shadowing) and small scale levels \cite{tse2005fundamentals}. Let $h_k$ denote the small scale fading gain induced by multi-path propagation at time instant $t_k$.  Suppose $\{h_{k}\}_{k=0}^{\infty}$ is an \emph{i.i.d} process that
satisfies a Raleigh distribution with a scale parameter $1$, i.e. $h_{k} \sim \text{Raleigh}(1), \forall k \in \mathbb{Z}_{\geq 0}$. Let $\psi(\cdot): S \rightarrow [0, 1]$ denote a shadow level function that characterizes the level of shadowing effect on the channel gain for each MDP state, i.e. $0 \leq \psi(s) \leq 1, \forall s \in S$. Thus, the state dependent channel gain is $\overline{h}_{k}(s):=\psi(s)h_{k}$, and for a given transmission power level $p$ and noise power $N_0$, the SNR is $p\overline{h}_{k}(s)^{2}/N_0$. With the assumption that the small scale fading gain is conditionally independent on shadowing state $s \in S$,  for a given SNR threshold $\gamma_0$, one has
\begin{align*}
&{\rm Pr}\{\gamma(k)=1 | s(k)=s,p(k)=p\}\\=&{\rm Pr}\{\frac{p(k)h_{k}^{2}\psi(s(k))^{2}}{N_{0}} \geq \gamma_{0} \Big| s(k)=s, p(k)=p\} \\                                                             =&\int_{\frac{\gamma_{0}N_{0}}{p}}^{\infty} \psi(s) e^{-\psi(s) x} dx = e^{-\frac{N_{0}\gamma_{0}\psi(s)}{p}}.
\end{align*}
Then, we have the explicit function form $\theta(s, p)=1-e^{-\frac{N_{0}\gamma_{0}\psi(s)}{p}}$ for SDDC model. 
\end{example}

The SDDC in \eqref{eq: SDDC} characterizes a \emph{cyber-physical coupling} between the networked control system $\mathcal{G}$ and the MDP system $\mathcal{M}$. In the presence of such coupling, the first objective of this paper is to find conditions under which system $\mathcal{G}$ achieves \replaced{\emph{stochastic safety} that is formally defined as belows.}{\emph{almost sure asymptotic stability} without external disturbance~($w=0$) and \emph{stochastic stability in probability} with essentially ultimated bounded disturbance. These stability notions are formally defined as follows.}
\begin{definition}[Stochastic Safety]
	\label{def:stochastic-safety}
	Consider the networked control system $\hat{\mathcal{G}}$ in (\ref{eq: G_hat}) and the SDDC model in (\ref{eq: SDDC}), let $\Omega_{s}=\{x \in \mathbb{R}^{n_x+n_c} \vert |x| \leq r \}$ with $r \geq 0$ denote a safe set for $\hat{\mathcal{G}}$ system, and $x_0=x(0)$ denote the initial state of the networked control system, 
	\begin{itemize}
		\item[\textbf{E1}] The $\hat{\mathcal{G}}$ system with $w \equiv 0$ is \emph{asymptotically safe in expectation} with respect to $\Omega_{s}$, if $\forall x(0) \in \Omega_{s}$, there exists a class $\mathcal{KL}$ function $\overline{\beta}(\cdot, \cdot)$ such that
		\begin{align}
		\label{eq: asymptotic-safety-in-expectation}
		\mathbb{E}\big[ |x(t)| \big] \leq \overline{\beta}(|x_0|, t), \quad \forall t \in \mathbb{R}_{\geq 0}
		\end{align}
		and thereby $\lim_{t \rightarrow +\infty}\mathbb{E}\big[ |x(t)| \big]=0$.
		\item[\textbf{E2}] The $\hat{\mathcal{G}}$ system with $|w(t)|_{\mathcal{L}_{\infty}} \leq M_w$ is \emph{asymptotically bounded in expectation} with respect to $\Omega_{s}$, if $\forall x(0) \in \Omega_{s}$, there exists a class $\mathcal{KL}$ function $\overline{\beta}(\cdot, \cdot)$ and a class $\mathcal{K}$ function $\kappa(\cdot)$ such that 
		\begin{align}
		\label{ineq: stochastic-safety-in-expectation}
		\mathbb{E}\big[ |x(t)|\big] \leq \overline{\beta}(|x_0|, t) + \kappa(M_w), \quad \forall t \in \mathbb{R}_{\geq 0}
		\end{align}
		and $\lim_{t \rightarrow +\infty}\mathbb{E}\big[ |x(t)|\big]=\kappa(M_w)$.
		\item[\textbf{P1}] The $\hat{\mathcal{G}}$ system with $w \equiv 0$ is \emph{almost surely asymptotically safe} with respect to $\Omega_{s}$, if $\forall \epsilon, \tau > 0$ and $x_{0} \in \Omega_{s}$, there exists a class $\mathcal{KLL}$ function $\beta_{\epsilon}(\cdot, \cdot, \cdot)$ such that 
		\begin{align}
		\label{ineq: almost-sure-safety}
		{\rm Pr}\big\{\sup_{t \geq \tau} |x(t)| \geq \epsilon+r \big\} \leq \beta_{\epsilon}(|x_0|, \tau, r) 
		\end{align}
		and $\lim_{\tau \rightarrow \infty} {\rm Pr}\big\{\sup_{t \geq \tau} |x(t)| \geq \epsilon+r \big\}=0$.
		\item[\textbf{P2}] The $\hat{\mathcal{G}}$ system with $|w(t)|_{\mathcal{L}_{\infty}} \leq M_w$ is \emph{stochastically safe in probability} with respect to $\Omega_{s}$, if $\forall \epsilon_1 > 0$, there exists a class $\mathcal{KL}$ function $\overline{\beta}_{\epsilon_2}(M_w, r)$ such that 
		\begin{align}
		\label{ineq: stochastic-safety-in-probability}
		\lim_{t \rightarrow \infty}{\rm Pr}\big\{ |x(t)| \geq \epsilon_1+r\big\} \leq \overline{\beta}_{\epsilon_2}(M_w, r).
		\end{align}
	\end{itemize}
	\end{definition}
	

\begin{remark}
The safety notions \textbf{E1} and \textbf{E2} are concerned with system behavior on average~(in the first moment) while the safety notions \textbf{P1} and \textbf{P2} focus on the specification on the sample path of the system. Note that these two types of safety definitions specify both the system's transient and steady behavior. For systems without external disturbance, the safety definition \textbf{E1} requires that the first moment of the norm of the system trajectories must asymptotically converge to the origin if the initial states start within the safety set while the \emph{almost sure asymptotic safety} definition \textbf{P1} is a stronger safety notion than the definition \textbf{E1} in the sense that it requires almost all sample paths starting from the safety set $\Omega_{s}$ stay in the safe region with probability asymptotically going to one. For systems with \replaced{non-vanishing}{non-varnishing} but ultimately bounded disturbance, the definition \textbf{E2} requires that the first moment of the system trajectories is asymptotically bounded with its bound depending on the magnitude of external disturbance. The safety notion \textbf{P2} basically means that the probability of sample paths of the system leaving the safe region is asymptotically bounded and the probability bound is a function of the size of the external disturbance and safety region. These safety notions are closely related to the concepts of stochastic stability defined in \cite{kushner1967,khasminskii2011stochastic}.
\end{remark}

Under the safety conditions for system $\mathcal{G}$, the second objective of this paper is to seek optimal control and communication policies to achieve system efficiency for both system $\mathcal{G}$ and $\mathcal{M}$. A control policy for the MDP system $\mathcal{M}$ is an infinite sequence $\pi^{m}=\{u^{m}_1, u^{m}_2, \ldots\}$ where $u^{m}_k$ is the decision made at time instant $k$. The decision making $u^{m}_k$ is defined as a probability distribution over the action set $A$ given the history information, i.e.,~$u_{k}^{m}={\rm Pr}\{a \vert s_{k}, a_{k-1}, \ldots, s_0\}, \forall a \in A$. Similarly, a power policy for system $\mathcal{G}$ can be defined as $\pi^{p}=\{u^{p}_1, u^{p}_{2}, \ldots, \}$ with $u^{p}_{k}={\rm Pr}\{p \vert s_{k}, a_{k-1}, \ldots, s_0\}$. The policy is \emph{stationary} if $\pi^{m}_{\infty}=\{u_{\infty}^{m}, u_{\infty}^{m}, \ldots \}$~($\pi^{p}_{\infty}=\{u_{\infty}^{p}, u_{\infty}^{p}, \ldots \}$) with $u_{\infty}^{m}={\rm Pr}\{a \vert s\}$~($u_{\infty}^{p}={\rm Pr}\{p \vert s\}$), $\forall a \in A, s \in S$ and $p \in \Omega_p$. \added{This paper will focus on the stationary policy space.}

With the definitions of control $\pi^{m}$ and communication $\pi^{p}$ policies, the system efficiency is defined as a constrained infinite horizon optimization problem as follows,
\begin{equation}
\label{eq: opt-original}
\begin{aligned}
& \underset{\pi^{p}, \pi^{m}}{\text{min}}
& & J_{\alpha}(s_{0}, \pi^{m}, \pi^{p})=(1-\alpha)\sum_{k=0}^{\infty}\alpha^{k}\mathbb{E}\{\lambda c_{p}(p_{k})+c(s_{k}, a_{k})\}\\
& \text{s.t.} 
& & \text{Safety conditions assuring}~\eqref{eq: asymptotic-safety-in-expectation}~\text{or}~\eqref{ineq: stochastic-safety-in-expectation}~\text{or}~\eqref{ineq: almost-sure-safety}~\text{or}~\eqref{ineq: stochastic-safety-in-probability}.
\end{aligned}
\end{equation}
where $c_{p}(\cdot): \Omega_p \rightarrow \mathbb{R}_{\geq 0}$ is the power cost and $c(\cdot, \cdot)$ is the cost defined in the MDP system.  \replaced{$\alpha \in (0, 1)$ is the discounted factor that provides a weight between short term rewards and rewards that might be obtained in a more distance future. $\lambda> 0$ is a parameter used to adjust the weight between communication and control costs.}{$\lambda > 0$ and $\alpha \in (0, 1)$ are the weight and discounted factors respectively.} 

	\section{Stochastic Safety}
\label{sec: safety}
This section presents sufficient conditions to ensure \emph{stochastic safety} defined in Definition \ref{def:stochastic-safety} for the $\mathcal{G}$ system. The following two assumptions are \replaced{needed}{needced} for the main results. 
\begin{assumption}
\label{assumption: iss}
The system $\dot{x}=f(t,x, e, w)$ is input to state stable ~\added{(ISS)} w.r.t. $e$ and $w$, i.e. there exist a class $\mathcal{KL}$ function $\beta(\cdot, \cdot)$, a class $\mathcal{K}$ function $\gamma_{2}(\cdot)$ and a positive real $\overline{\gamma}_{1} \in \mathbb{R}_{\geq 0}$ such that
$
|x(t-t_{0})| \leq \beta(|x(t_{0})|, t-t_{0})+\overline{\gamma}_{1}|e|_{[t_{0},t)}+\gamma_{2}(|w|_{[t_{0}, t)})
$
and $\beta(\cdot, t)$ is a concave function for any fixed $t \in \mathbb{R}_{\geq 0}$. \added{The system is \emph{exponential input to state stable~(Exp-ISS)} w.r.t. $e$ and $w$, if  $\beta(s, t)$ is  a class Exp-$\mathcal{KL}$ function and $\gamma_{2}(s)=\overline{\gamma}_{2}s$ is a linear function with $\overline{\gamma}_{2} > 0$.}
\end{assumption}

\begin{assumption}
\label{assumption: e}
There exists a Lyapunov function $W(\cdot)$ and $\underline{w}, \overline{w}, L_{1}, L_{2}, L_{3} > 0$ for the estimation error dynamics $\dot{e}=g(t, x, e, w)$ in system (\ref{eq: G_hat}) such that 
\begin{align}
\underline{w}|e| &\leq W(e) \leq \overline{w}|e|,  \label{ineq: lower-upper-W}\\
\left\langle \frac{\partial W(e)}{\partial e}, g(t, x, e, w) 
\right\rangle  &\leq L_{1} W(e)+L_{2} |x|+L_{3}|w|.
\label{ineq: grow-W}
\end{align}
\end{assumption}
Assumption \ref{assumption: e} basically requires that the estimation error $e$ is exponentially bounded and the couplings of $x, w$ in the error dynamics are linear.  The following proposition shows that for a given transmission time sequence $\{t_k\}_{k=0}^{\infty}$, the estimation error $e(t_k)$ forms a stochastic jump process whose jump size is $\theta(s, p)$ and depends on the MDP's state $s \in S$ and the transmission power level $p \in \Omega_p$.
\begin{proposition}
\label{proposition: exp-W}
Consider a random dropout process $\{\gamma(k)\}$ associated with the channel's SDDC model in \eqref{eq: SDDC} and let $\{t_{k}\}$ denote the transmission time sequence.
Let $W(e)$ be a Lyapunov function for the error dynamic system in \eqref{eq: G_hat}, then one has
\begin{align}
\mathbb{E}\{W(e(t_{k}^{+})) \Big| s(k)=s, p(k)=p\} = \theta(s, p) W(e(t_{k})).
\end{align} 
where the conditional expectation operator $\mathbb{E}(\cdot \vert \cdot)$ is taken with respect to the random process ${\gamma(k)}$. 
\end{proposition}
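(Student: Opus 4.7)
The plan is to compute the conditional expectation directly by conditioning on the two possible outcomes of the Bernoulli variable $\gamma(k)$. From the reset rule $e(t_k^+) = (1-\gamma(k)) e(t_k)$, the value of $W(e(t_k^+))$ is a deterministic function of $\gamma(k)$ once $e(t_k)$ is fixed, so the only randomness left in the conditional expectation comes from $\gamma(k)$.

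First I would note that Assumption \ref{assumption: e}, in particular the bound $\underline{w}|e| \leq W(e) \leq \overline{w}|e|$, forces $W(0)=0$. Then I would split into the two cases: when $\gamma(k)=1$ (successful reception) we have $e(t_k^+)=0$ and hence $W(e(t_k^+))=0$; when $\gamma(k)=0$ (dropout) we have $e(t_k^+)=e(t_k)$ and hence $W(e(t_k^+))=W(e(t_k))$. Next I would invoke the SDDC definition \eqref{eq: SDDC} to substitute the conditional probabilities $\Pr\{\gamma(k)=1 \mid s(k)=s, p(k)=p\}=1-\theta(s,p)$ and $\Pr\{\gamma(k)=0 \mid s(k)=s, p(k)=p\}=\theta(s,p)$.

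The law of total expectation then gives
\begin{align*}
\mathbb{E}\{W(e(t_{k}^{+})) \mid s(k){=}s, p(k){=}p\} &= (1-\theta(s,p))\,W(0) + \theta(s,p)\,W(e(t_k)) \\
&= \theta(s,p)\,W(e(t_k)),
\end{align*}
where $e(t_k)$ is treated as measurable with respect to the conditioning information (it is determined by the sample path up to time $t_k$, before the Bernoulli trial at step $k$ is drawn).

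There is essentially no obstacle here; the result is a one-line consequence of the two-point distribution of $\gamma(k)$ and the definition of the reset map. The only modeling subtlety worth flagging explicitly is the measurability of $e(t_k)$ relative to the $\sigma$-algebra generated by $\{s(k),p(k)\}$ together with the past, which justifies pulling $W(e(t_k))$ outside the conditional expectation; this is immediate under the standard filtration in which $\gamma(k)$ is drawn after $s(k)$, $p(k)$ and $e(t_k)$ have been realized.
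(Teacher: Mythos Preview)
Your proof is correct and follows the same approach as the paper, which simply combines the identity $W(e(t_k^+))=(1-\gamma(k))W(e(t_k))$ (a consequence of the reset rule and $W(0)=0$) with the SDDC model \eqref{eq: SDDC}. Your version just spells out the two-case split and the measurability remark more explicitly.
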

\begin{IEEEproof}
The proof is easily completed by combining $W(e(t_{k}^{+}))=(1-\gamma(k))W(e(t_{k}))$ and the SDDC model in \eqref{eq: SDDC}.
\end{IEEEproof}

Under a \emph{state dependent shadow fading channel}, the following theorem present\added{s} a sufficient condition on the Maximum Allowable Transmission Interval (MATI) under which the system $\hat{\mathcal{G}}$ achieves \emph{almost sure asymptotic safety}. In particular, we show that the MATI is a function of the control~($\pi_{\infty}^{m}$) and transmission power~($\pi_{\infty}^{p}$) policies.
\begin{theorem}
\label{thm: almost-sure-safety}
Let $T_{k}=t_{k+1}-t_{k}$ denote the transmission time interval, $P_{m}$ denote the transition matrix defined in \eqref{Pm} and $p \in \Omega_{p}$ denote the transmission power level. Suppose \replaced{the ISS assumption in Assumption \ref{assumption: iss} }{Assumptions \ref{assumption: iss}} and \added{Assumption} \ref{assumption: e} hold, for a given stationary control policy $\pi_{\infty}^{m}$ and a given stationary transmission power policy $\pi_{\infty}^{p}$, the $\hat{\mathcal{G}}$ system with $w=0$ is \replaced{asymptotically safe in expectation~(asymptotically stable in expectation)}{almost surely safe (\textbf{E1} in Definition \ref{def:stochastic-safety})} with respect to the origin, if $T_{k} \in (0, \tau^{*}]$ where
\begin{align}
\tau^{*}=\frac{1}{L_{1}}\ln {\frac{L_{2}\overline{\gamma}_{1}+L_{1}\overline{w}}{L_{2}\overline{\gamma}_{1}+\overline{w}L_{1}\|P_{m}(\pi_{\infty}^{m}, \pi_{\infty}^{p})\text{diag}(\theta(s, p)\|}} > 0
\label{eq: bound_T}
\end{align}
is the MATI. The system parameters $L_{1}$, and $L_{2}$ come from (\ref{ineq: lower-upper-W}) and (\ref{ineq: grow-W}) respectively and
\begin{align}
&\text{diag}(\theta(s, p)) = \begin{bmatrix}
\theta(s_{1}, p_{1}) & \cdots & 0 & \cdots & 0 \\
\vdots & \ddots & \vdots & \ddots & \vdots \\
0 & \cdots & \theta(s_{i}, p_{j}) & \cdots & 0 \\
\vdots & \ddots & \vdots & \ddots & \vdots \\
0 & \cdots & 0 & \cdots & \theta(s_{N}, p_{M})
\end{bmatrix} \nonumber\\
&P_{m}(\pi_{\infty}^{m}, \pi_{\infty}^{p})=\begin{bmatrix}
{\rm Pr}(s_{1}, p_{1} | s_{1}, p_{1}) & \cdots & {\rm Pr}(s_{1}, p_{1} | s_{N}, p_{M}) \\
{\rm Pr}(s_{1}, p_{2} | s_{1}, p_{1}) & \cdots & {\rm Pr}(s_{2}, p_{1} | s_{N}, p_{M}) \\
\vdots & \vdots & \vdots \\
{\rm Pr}(s_{N}, p_{M} | s_{1}, p_{1}) & \cdots & {\rm Pr}(s_{N}, p_{M} | s_{N}, p_{M}) \\
\end{bmatrix}
\label{Pm}
\end{align}
with ${\rm Pr}\{s_{i}, p_{i} \vert s_{j}, p_{j}\}=\sum_{a \in A(s_j)}{\rm Pr}\{s_i \vert a, s_j\}{\rm Pr}\{a \vert s_j\}{\rm Pr}\{p_i \vert s_i\}$.
\end{theorem}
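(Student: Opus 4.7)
The plan is to combine the inter-sample growth of the error Lyapunov function $W(e)$ from Assumption \ref{assumption: e} with the Markov-modulated expected jump from Proposition \ref{proposition: exp-W}, and to close the $x$--$e$ coupling via the ISS gain of Assumption \ref{assumption: iss}. First I would apply the comparison lemma to (\ref{ineq: grow-W}) with $w \equiv 0$ on each interval $[t_k^+, t_{k+1})$ to get
\begin{equation*}
W(e(t)) \leq e^{L_1(t - t_k^+)} W(e(t_k^+)) + L_2 \int_{t_k^+}^{t} e^{L_1(t-\tau)} |x(\tau)|\, d\tau,
\end{equation*}
and, using Assumption \ref{assumption: iss} together with (\ref{ineq: lower-upper-W}), replace $|x(\tau)|$ by $\beta(|x_0|, \tau) + \overline{\gamma}_1 |e|_{[0,\tau)}$ and $|e|$ by $W(e)/\underline{w}$. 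Evaluating the integral $\int_{t_k^+}^{t_{k+1}} e^{L_1(t_{k+1}-\tau)}\, d\tau = (e^{L_1 T_k}-1)/L_1$ and absorbing the $\overline{w}$-bound on $W$ yields a deterministic inter-sample inequality of the form $W(e(t_{k+1}^-)) \leq \mathcal{A}(T_k)\, W(e(t_k^+)) + \eta_k(|x_0|)$, where $\mathcal{A}(T_k)$ is governed by $e^{L_1 T_k}$ and by the constants $L_1,L_2,\overline{\gamma}_1,\overline{w}$, and $\eta_k$ decays with $t_k$.

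Next I would take conditional expectations at the jump $t_{k+1}$. Define the vector $V(k) \in \mathbb{R}^{NM}$ with components $V(k)_{(s_i,p_j)} := \mathbb{E}\bigl[W(e(t_k))\, \mathbbm{1}\{(s(k),p(k)) = (s_i,p_j)\}\bigr]$. Proposition \ref{proposition: exp-W} turns the pre-/post-jump map into componentwise multiplication by $\text{diag}(\theta(s,p))$, while the Markov property of the joint $(s,p)$-chain under the stationary policies $(\pi_\infty^m,\pi_\infty^p)$ transports the next-step distribution through $P_m(\pi_\infty^m,\pi_\infty^p)$. Combined with the inter-sample bound of the previous step, this produces a vector recursion
\begin{equation*}
V(k+1) \leq \mathcal{A}(T_k)\, P_m(\pi_\infty^m, \pi_\infty^p)\, \text{diag}(\theta(s,p))\, V(k) + \epsilon_k,
\end{equation*}
with $\epsilon_k \to 0$ driven by $\beta(|x_0|,\cdot)$.

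Strict contraction in the induced operator norm then demands $\mathcal{A}(T_k)\,\|P_m\,\text{diag}(\theta)\| < 1$. Unpacking $\mathcal{A}(T_k)$ in terms of the $\overline{w}$ factor (from $W \leq \overline{w}|e|$) together with the $L_2\overline{\gamma}_1(e^{L_1 T_k}-1)/L_1$ contribution from the $|x|$--$|e|$ feedback yields exactly
\begin{equation*}
\overline{w} L_1\,\|P_m\,\text{diag}(\theta)\|\, e^{L_1 T_k} + L_2 \overline{\gamma}_1\bigl(e^{L_1 T_k}-1\bigr) \;<\; L_2 \overline{\gamma}_1 + L_1 \overline{w},
\end{equation*}
whose rearrangement is $T_k \leq \tau^*$ from (\ref{eq: bound_T}). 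Iterating the geometric recursion for $\mathbb{E}[W(e(t_k))]$, feeding it back through the ISS gain, and using concavity of $\beta(\cdot,t)$ from Assumption \ref{assumption: iss} to commute the expectation with the envelope, I obtain a class-$\mathcal{KL}$ bound $\mathbb{E}[|x(t)|] \leq \overline{\beta}(|x_0|, t)$, which is property \textbf{E1}.

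The main obstacle I expect is the tight accounting in the first step: the $L_2|x|$ term in (\ref{ineq: grow-W}) and the $\overline{\gamma}_1|e|_{[0,\tau)}$ term in the ISS estimate form an implicit loop, and recovering precisely the denominator $L_2\overline{\gamma}_1 + \overline{w}L_1\|P_m\,\text{diag}(\theta)\|$ (rather than a looser surrogate) requires careful handling of the essential supremum over the interval and of the concave $\beta$ under expectation. Once this bound is sharp, the Markov vector recursion and the class-$\mathcal{KL}$ conclusion are largely mechanical.
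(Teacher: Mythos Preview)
Your skeleton---comparison lemma on $W$ over each transmission interval, then a Markov-modulated vector recursion for $\mathbb{E}[W(e(t_k))\mathbbm{1}_{(s,p)}]$ via Proposition~\ref{proposition: exp-W}, then a contraction condition yielding (\ref{eq: bound_T})---matches the paper's. The difference is in how the $x$--$e$ loop is closed. The paper does \emph{not} substitute the ISS bound for $|x(\tau)|$ into the $W$-integral. Instead it keeps $|x|_{[t_k,t_{k+1})}$ as an exogenous input in the $W$-recursion, takes the $\infty$-norm and iterates to obtain an ISS-type bound for the $E$-system with $|X_{[0,k+1)}|$ as input and linear gain $\frac{L_2}{L_1\overline{w}}\frac{e^{L_1 T^*}-1}{1-e^{L_1T^*}P_\infty}$. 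It then writes the symmetric ISS bound $|X_t| \leq \beta(|X_0|,t-t_0) + \overline{\gamma}_1|E_{[t_0,t)}|$ for the $X$-system (taking expectations against $\mathbbm{1}_{s,p}$ and using concavity of $\beta(\cdot,t)$ to pass the expectation inside). The loop is then closed by the small gain theorem of \cite{jiang1994small}, and the small gain condition $\overline{\gamma}_1 \cdot \frac{L_2}{L_1\overline{w}}\frac{e^{L_1 T^*}-1}{1-e^{L_1T^*}P_\infty} < 1$ rearranges exactly to $T^* \leq \tau^*$.

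Your direct-substitution route runs into precisely the obstacle you flag, and it is a genuine gap: the ISS estimate gives $|x(\tau)| \leq \beta(|x_0|,\tau) + \overline{\gamma}_1 |e|_{[0,\tau)}$, and $|e|_{[0,\tau)}$ is the essential supremum over the \emph{entire} past, not just the current interval. Substituting it in does not produce a one-step recursion $W(e(t_{k+1}^-)) \leq \mathcal{A}(T_k) W(e(t_k^+)) + \eta_k$; it produces a bound depending on $\sup_{j \leq k}\sup_{t \in [t_j,t_{j+1})}|e(t)|$, a random quantity whose expectation does not commute with the supremum. Consequently the vector recursion $V(k+1) \leq \mathcal{A}(T_k)\,P_m\,\text{diag}(\theta)\,V(k)+\epsilon_k$ as written does not follow. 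The paper's device is exactly to defer this coupling to the small gain step, after both subsystems have been put in ISS form \emph{in expectation}. A minor point: your displayed contraction inequality does not rearrange to (\ref{eq: bound_T}); expanding it gives $e^{L_1 T_k}(L_2\overline{\gamma}_1 + \overline{w}L_1 P_\infty) < 2L_2\overline{\gamma}_1 + L_1\overline{w}$, whereas the correct target has $L_2\overline{\gamma}_1 + L_1\overline{w}$ on the right.
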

\begin{proof}
The proof is provided in Appendix \ref{appendix: proof}.
\end{proof}
\begin{remark}
The MATI in \eqref{eq: bound_T} generalizes the result in \cite{nevsic2004input}. In particular, one can see that the MATI in \cite{nevsic2004input} is recovered if the shadow fading is absent, i.e., $\theta(s, p)=0, \forall s\in S, p \in \Omega_p$. \deleted{Theorem \ref{thm: almost-sure-safety} also holds for non-stationary policies with the transition matrix $P_m$ being time varying.}
\end{remark}
\begin{theorem}
\label{thm: as-es}
Let the hypothesis in Theorem \ref{thm: almost-sure-safety} and \added{the Exp-ISS assumption in} Assumption \ref{assumption: iss} hold \deleted{and suppose there exists $K_{1}, K_{2}> 0$ such that $\beta(s, t)=K_{1}e^{-K_{2}t} $}, then the system $\mathcal{G}$ is almost surely asymptotically safe~(\textbf{P1} in Definition \ref{def:stochastic-safety}) with respect to the origin. 
\end{theorem}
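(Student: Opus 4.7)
The plan is to upgrade the first-moment bound established in Theorem \ref{thm: almost-sure-safety} into a pathwise, almost sure statement by combining Markov's inequality with a summable-tail (Borel--Cantelli style) argument, together with a continuous-time interpolation between sampling instants. The Exp-ISS hypothesis is precisely what makes the relevant probability tails geometrically summable; without it, the $\mathcal{KL}$ decay produced in \textbf{E1} would be too weak to support a union bound.

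First, I would revisit the recursion that produced \eqref{eq: bound_T}. Under the Exp-$\mathcal{KL}$ form $\beta(s,t)=K_1 e^{-K_2 t} s$ and the linear ISS gain $\gamma_2(s)=\overline{\gamma}_2 s$ (which contributes nothing when $w \equiv 0$), the same argument that established the MATI yields an explicitly exponential first-moment bound $\mathbb{E}[|x(t)|] \leq C e^{-\rho t}|x_0|$ for constants $C,\rho>0$. The strict positivity of $\rho$ is exactly the content of $T_k \leq \tau^*$: by Proposition \ref{proposition: exp-W}, the expected Lyapunov function of the error contracts at each sample by a factor $\|P_m(\pi_\infty^m,\pi_\infty^p)\text{diag}(\theta(s,p))\|$, which dominates the inter-sample continuous growth $e^{L_1 T_k}$ whenever $T_k \leq \tau^*$. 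Exp-ISS then transfers this geometric contraction in $\mathbb{E}[W(e)]$ to a geometric contraction in $\mathbb{E}[|x(t)|]$.

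Next, Markov's inequality gives the pointwise-in-$t$ tail bound $\Pr\{|x(t)|\geq \epsilon+r\}\leq C e^{-\rho t}|x_0|/(\epsilon+r)$. To replace $|x(t)|$ by $\sup_{t \geq \tau}|x(t)|$, I would sample on a grid $\{n\Delta\}_{n\geq 0}$ with $\Delta$ chosen comparable to $\tau^*$, and use the continuous Exp-ISS estimate on each sub-interval $[n\Delta,(n+1)\Delta)$ to bound $\sup_{t \in [n\Delta,(n+1)\Delta)}|x(t)|$ by a deterministic multiple of $|x(n\Delta)|$ plus a deterministic multiple of $|e|_{[n\Delta,(n+1)\Delta)}$; both of these admit geometric first-moment bounds from the step above. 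A union bound over $n \geq \tau/\Delta$ and summation of the resulting geometric series then produces a tail bound of the form $\Pr\{\sup_{t\geq \tau}|x(t)|\geq \epsilon+r\}\leq C' e^{-\rho\tau}|x_0|/(\epsilon+r)$, which I would repackage as a class $\mathcal{KLL}$ function $\beta_\epsilon(|x_0|,\tau,r)$, the $\epsilon$ dependence entering parametrically through the Markov denominator.

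The main obstacle I anticipate is the bookkeeping needed to cast the final probability bound as a genuine class $\mathcal{KLL}$ function in the sense of the notation block: one must exhibit $\mathcal{KL}$ behavior jointly in $(|x_0|,\tau)$ for each fixed $r$ and simultaneously in $(|x_0|,r)$ for each fixed $\tau$, which forces a specific multiplicative structure on the constants $C',\rho$ and on how $(\epsilon+r)$ enters. A secondary difficulty is controlling the supremum inside the sub-intervals rather than at sample points alone; this requires an inter-sample growth bound derived from the Lipschitz structure of $f$ in \eqref{eq: G_hat}, carefully used with $w \equiv 0$ so that no additive disturbance inflates the decay constants.
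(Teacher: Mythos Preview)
Your proposal is correct but takes a genuinely different route from the paper. Both arguments begin identically, upgrading the $\mathcal{KL}$ bound of Theorem~\ref{thm: almost-sure-safety} to an explicit exponential first-moment estimate $\mathbb{E}[|x(t)|] \leq K_1 e^{-K_2 t}|x_0|$ under the Exp-ISS hypothesis. Where they diverge is in passing from this first-moment bound to the supremum tail bound~\eqref{ineq: almost-sure-safety}. You propose a discrete-grid, Borel--Cantelli style argument: sample on $\{n\Delta\}$, control the continuous-time supremum on each sub-interval via the Exp-ISS estimate and the Lipschitz structure of $f$, apply Markov's inequality at each grid point, and sum the resulting geometric tails via a union bound. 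The paper instead applies Markov's inequality directly to the time integral $\int_\tau^{\tau'}|x(t)|\,dt$, exchanges expectation and integration by Fubini to obtain $\int_\tau^{\tau'}\mathbb{E}[|x(t)|]\,dt \leq \frac{K_1|x_0|}{K_2}\big(e^{-K_2\tau}-e^{-K_2\tau'}\big)$, and lets $\tau'\to\infty$ to arrive in one stroke at the same bound $\frac{K_1|x_0|}{K_2(\epsilon+r)}e^{-K_2\tau}$. The paper's route is shorter and sidesteps both obstacles you flag (inter-sample growth control and grid bookkeeping); your route is the more standard probabilistic construction and does not rely on the delicate step of dominating the supremum event by an integral event. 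Either way the resulting bound is already in the multiplicative form $|x_0|\cdot e^{-K_2\tau}\cdot(\epsilon+r)^{-1}$, so the $\mathcal{KLL}$ verification you worry about is immediate and not a genuine obstacle.
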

\begin{proof}
\deleted{The proof is similar to the proof in Theorem \ref{thm: almost-sure-safety} and thus is omitted due to space limit.} \added{The proof is provided in Appendix \ref{appendix: proof}. }
\end{proof}
\begin{theorem}
\label{thm: practical-stability-in-expectation}
Suppose the MATI condition in \eqref{eq: bound_T} holds and consider the system in \eqref{eq: G_hat} with \deleted{essentially bounded external disturbance} $|w|_{\mathcal{L}_{\infty}} \leq M_{w}$,  then the system $\hat{\mathcal{G}}$ is \replaced{asymptotically bounded in expectation~(\textbf{E2} in Definition \ref{def:stochastic-safety})}{stochastically safe in probability} \added{with respect to a bounded safe set $\Omega_{s}=\{x \in \mathbb{R}^{n_x+n_c} \vert |x| \leq r \}$}, i.e., $\forall x(0) \in \Omega_{s}$, there exists a class $\mathcal{KL}$ function $\overline{\beta}(\cdot, \cdot)$ and a class $\mathcal{K}$ function $\kappa(\cdot)$ such that 
\begin{align*}
\mathbb{E}\big[ |x(t)|\big] \leq \overline{\beta}(|x_0|, t) + \kappa(M_w), \quad \forall t \in \mathbb{R}_{\geq 0}
\end{align*}
and $\lim_{t \rightarrow +\infty}\mathbb{E}\big[ |x(t)|\big]=\kappa(M_w)$.
\end{theorem}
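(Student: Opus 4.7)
The plan is to mimic the proof strategy of Theorem \ref{thm: almost-sure-safety}, but to carry the disturbance term $w$ through every bound and show that, instead of forcing the coupled $(|x|, W(e))$ dynamics to zero, the disturbance produces a steady-state offset that depends monotonically on $M_w$. First I would integrate the Lyapunov inequality \eqref{ineq: grow-W} over a single transmission interval $[t_k^+, t_{k+1})$. Because $|w|_{\mathcal{L}_\infty} \leq M_w$, the variation-of-constants formula gives
\begin{align*}
W(e(t_{k+1})) \leq e^{L_1 T_k} W(e(t_k^+)) + L_2 \int_{t_k}^{t_{k+1}} e^{L_1(t_{k+1}-s)}|x(s)|\,ds + \tfrac{L_3 M_w}{L_1}\bigl(e^{L_1 T_k}-1\bigr).
\end{align*}
Taking the conditional expectation and invoking Proposition \ref{proposition: exp-W} together with the transition structure embedded in $P_m(\pi^m_\infty, \pi^p_\infty)$ yields a recursion for $\mathbb{E}[W(e(t_k))]$ whose homogeneous part has the contraction factor $e^{L_1 T_k}\,\|P_m(\pi^m_\infty,\pi^p_\infty)\mathrm{diag}(\theta(s,p))\|$; by the MATI condition \eqref{eq: bound_T} this factor is strictly less than $1$, so the recursion is a stable, bounded-input linear system driven by $|x|$ and the constant $M_w$.

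Next I would couple this to the ISS bound in Assumption \ref{assumption: iss}. Applied over the same interval,
\begin{align*}
|x(t)| \leq \beta(|x(t_k)|, t-t_k) + \overline{\gamma}_1 |e|_{[t_k,t)} + \gamma_2(M_w).
\end{align*}
Using \eqref{ineq: lower-upper-W} to replace $|e|$ by $W(e)/\underline{w}$ and then taking expectations --- here Jensen's inequality plus the concavity of $\beta(\cdot, t)$ guaranteed by Assumption \ref{assumption: iss} lets me pass the expectation through $\beta$ --- I obtain
\begin{align*}
\mathbb{E}\bigl[|x(t)|\bigr] \leq \beta\bigl(\mathbb{E}[|x(t_k)|], t-t_k\bigr) + \tfrac{\overline{\gamma}_1}{\underline{w}}\,\mathbb{E}\!\left[\sup_{s\in[t_k,t)} W(e(s))\right] + \gamma_2(M_w).
\end{align*}
Substituting the recursion obtained for $\mathbb{E}[W(e(t_k))]$ into this inequality produces a coupled pair of discrete-time inequalities in $\mathbb{E}[|x(t_k)|]$ and $\mathbb{E}[W(e(t_k))]$ with bounded, $M_w$-dependent forcing. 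The MATI condition supplies a small-gain-type contraction that dominates the cross-coupling, exactly as in Theorem \ref{thm: almost-sure-safety}, so the pair admits a unique asymptotic fixed point proportional to $M_w$ (up to the nonlinear $\gamma_2$).

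Finally I would convert the discrete-time estimate back into continuous time. Interpolating between sampling instants via the ISS bound and using the fact that $\beta \in \mathcal{KL}$ gives the exponentially-decaying transient $\overline{\beta}(|x_0|, t)$, while the steady-state value of the coupled fixed point defines the class $\mathcal{K}$ function $\kappa(M_w)$. Asymptotic boundedness $\lim_{t\to\infty}\mathbb{E}[|x(t)|] = \kappa(M_w)$ then follows from letting $t \to \infty$ in the coupled recursion.

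The hard part will be the simultaneous handling of the stochastic jumps in $W(e)$ and the nonlinear ISS bound on $|x|$: the expectation must be interchanged with $\beta$ (which is why concavity in Assumption \ref{assumption: iss} matters), and the MATI contraction must be shown to survive when the forcing is no longer vanishing. A secondary subtlety is that the supremum over $[t_k,t)$ of $W(e(s))$ inside the expectation must be controlled by the endpoint values $W(e(t_k^+))$ plus a bounded forcing integral --- a uniform Gronwall step on each deterministic sub-interval. Once these two points are in place, the algebra mirrors the zero-disturbance proof of Theorem \ref{thm: almost-sure-safety} with the additive $M_w$ terms carried along unchanged.
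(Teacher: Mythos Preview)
Your proposal is correct and follows essentially the same route as the paper: both extend the proof of Theorem~\ref{thm: almost-sure-safety} by carrying the $w$-dependent forcing terms through the variation-of-constants bound on $W(e)$ and the ISS bound on $|x|$, then close the loop with the same small-gain argument (the paper invokes Jiang--Teel--Praly explicitly) to obtain ISS of the composite $(E_k,X_k)$ system with respect to $w$. The only cosmetic difference is that the paper works throughout with the vector-valued expectations $W_k(s,p)$, $E_k$, $X_t$ indexed by the MDP/power states (as in its proof of Theorem~\ref{thm: almost-sure-safety}) rather than the scalar $\mathbb{E}[W(e(t_k))]$ you write, but since you already invoke the transition structure in $P_m$ to produce the correct contraction factor this amounts to the same computation.
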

\begin{proof}
The proof is provided in Appendix \ref{appendix: proof}.
\end{proof}
\begin{theorem}
Suppose the hypothesis in Theorem \ref{thm: practical-stability-in-expectation} holds, then the system $\hat{\mathcal{G}}$ is stochastically safe in probability~(\textbf{P2} in Definition \ref{def:stochastic-safety}) with respect to a bounded safe set $\Omega_{s}=\{x \in \mathbb{R}^{n_x+n_c} \big\vert |x| \leq r \}$.
\end{theorem}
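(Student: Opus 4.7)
The plan is to obtain the probabilistic bound \eqref{ineq: stochastic-safety-in-probability} by coupling the expectation bound supplied by Theorem \ref{thm: practical-stability-in-expectation} with Markov's inequality. Under the hypothesis of Theorem \ref{thm: practical-stability-in-expectation}, we already have, for every initial state $x(0) \in \Omega_{s}$, a class $\mathcal{KL}$ function $\overline{\beta}(\cdot,\cdot)$ and a class $\mathcal{K}$ function $\kappa(\cdot)$ such that $\mathbb{E}[|x(t)|] \leq \overline{\beta}(|x_0|,t)+\kappa(M_w)$ with $\lim_{t\to\infty}\mathbb{E}[|x(t)|]=\kappa(M_w)$. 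Since $|x(t)|$ is a non-negative random variable, Markov's inequality applies, and this is the workhorse of the argument.

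First, I would fix an arbitrary $\epsilon_1>0$ and apply Markov's inequality directly to the bound from Theorem \ref{thm: practical-stability-in-expectation}, yielding
\begin{align*}
{\rm Pr}\bigl\{|x(t)| \geq \epsilon_1 + r\bigr\}
\leq \frac{\mathbb{E}[|x(t)|]}{\epsilon_1+r}
\leq \frac{\overline{\beta}(|x_0|,t)+\kappa(M_w)}{\epsilon_1+r}.
\end{align*}
Then I would let $t\to\infty$; the class $\mathcal{KL}$ property of $\overline{\beta}$ kills the transient term, leaving
\begin{align*}
\lim_{t\to\infty}{\rm Pr}\bigl\{|x(t)| \geq \epsilon_1 + r\bigr\} \leq \frac{\kappa(M_w)}{\epsilon_1+r}.
\end{align*}

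The last step is to package the right-hand side as a class $\mathcal{KL}$ function in $(M_w,r)$ to match Definition \ref{def:stochastic-safety}\textbf{P2}. Defining $\overline{\beta}_{\epsilon_1}(M_w,r):=\kappa(M_w)/(\epsilon_1+r)$, the map $M_w \mapsto \overline{\beta}_{\epsilon_1}(M_w,r)$ inherits continuity, strict monotonicity, and vanishing at zero from $\kappa\in\mathcal{K}$, while $r\mapsto \overline{\beta}_{\epsilon_1}(M_w,r)$ is strictly decreasing and converges to zero as $r\to\infty$. If $\kappa$ is not radially unbounded, one can pass to a class $\mathcal{K}_\infty$ majorant $\widetilde{\kappa}\geq\kappa$, which only inflates the bound and preserves the inequality.

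The main obstacle, such as it is, is essentially cosmetic: ensuring that the bound $\kappa(M_w)/(\epsilon_1+r)$ genuinely qualifies as a class $\mathcal{KL}$ function under the paper's stricter $\mathcal{K}_\infty$-in-the-first-argument convention, which is handled by the majorant trick above. All substantive analytic work has already been performed in Theorem \ref{thm: practical-stability-in-expectation}, so this result is best viewed as a direct probabilistic corollary obtained by Markov's inequality and a limit interchange.
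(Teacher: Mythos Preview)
Your proposal is correct and follows the same approach as the paper: the paper's proof is literally a one-line appeal to Markov's inequality applied to the expectation bound of Theorem \ref{thm: practical-stability-in-expectation}, followed by taking $t\to\infty$. Your additional care in verifying that $\kappa(M_w)/(\epsilon_1+r)$ qualifies as a class $\mathcal{KL}$ function (and the $\mathcal{K}_\infty$ majorant remark) goes beyond what the paper spells out, but the substance is identical.
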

\begin{IEEEproof}
The result can be straightforwardly obtained by Markov inequality. 
\end{IEEEproof}
	\section{Safety and Efficiency: A Two-player Constrained Cooperative Game}
\label{sec: efficiency}
The \emph{system efficiency} in this paper is defined as an optimization problem where optimal transmission power and control policies are sought to minimize a joint communication and control cost in an infinite horizon. To assure both system efficiency and safety, the control~($\pi^{m}$) and communication~($\pi^{p}$) policies must be carefully coordinated due to their tight couplings as suggested by the safety condition in \eqref{eq: bound_T}. This collaboration between communication and control systems can be naturally formulated as a \emph{two-player constrained cooperative game} where the players' strategy spaces are constrained and coupled. The equilibrium of the game represents the optimal transmission power and control policies to achieve both \emph{system safety} and \emph{efficiency}. 
\begin{problem}[Two-player Constrained Cooperative Game]
\label{problem: CCSG}
Let $c_{p}(\cdot):\Omega_p \rightarrow \mathbb{R}_{\geq 0}$ denote the power cost and $c(\cdot, \cdot): S \times A \rightarrow \mathbb{R}_{\geq 0}$ denote the control cost for the MDP system, the safety and efficiency problem is to find the optimal control $\pi^{m^*}$ and transmission power $\pi^{p^*}$ policies to the following two-player constrained cooperative game,
\begin{equation}
\label{eq: opt}
\begin{aligned}
& \underset{\pi^{p}, \pi^{m}}{\text{min}}
& & J_{\alpha}(s_{0}, \pi^{m}, \pi^{p})\\
& \text{s.t.} 
& & \|P_{m}(\pi^{m}, \pi^{p})\text{diag}(\theta(s, p))\| \leq \xi(T).
\end{aligned}
\end{equation}
where $\alpha \in (0, 1)$ and $T$ is the transmission time interval and $\xi(T) \in (0, 1)$ is a monotonically decreasing function with respect to $T$.
\end{problem} 
\begin{remark}
\label{remark: safety-constraint}
The inequality \eqref{eq: opt} is a safety constraint reformulated by the sufficient condition \eqref{eq: bound_T}. In order to see how this safety constraint is derived from \eqref{eq: bound_T}, let $(\pi^{p}, \pi^{m})$ denote the feasible policies such that $T \leq \tau^{*}(\pi^{p}, \pi^{m})$. Thus
\begin{align*}
T \leq \frac{1}{L_{1}}\ln {\frac{L_{2}\overline{\gamma}_{1}+L_{1}}{L_{2}\overline{\gamma}_{1}+L_{1}\|P_{m}(\pi^{p}, \pi^{m})\text{diag}(\theta(s, p))\|}}.
\end{align*}
By arranging the inequality, one has
\begin{align*}
\|P_{m}(\pi_{p}, \pi_{m})\text{diag}(\theta(s, p))\| \leq \underbrace{\frac{1}{L_{1}}\big [e^{-L_{1} T}(L_{2}\overline{\gamma}_{1}+L_{1})-L_{2}\overline{\gamma}_{1}\big]}_{\xi(T)}.
\end{align*}
Since $T \leq \tau^{*}$, one always has $\xi(T) > 0$. Thus, for any given control $\pi^{m}$ and power $\pi^{p}$ policies that satisfy the above inequality, the sufficient condition in \eqref{eq: bound_T} assures system safety. 
\end{remark}


Under the stationary policy space, we show that the two-player constrained cooperative game Problem \ref{problem: CCSG} can be solved by solving the following constrained nonlinear optimization problem.
\begin{problem}{\bf Constrained Nonlinear Optimization Problem:}
\label{problem: CNOP}
Suppose the state~$S$ and action~$A$ spaces in the MDP system $\mathcal{M}$ are finite sets, and transmission power set $\Omega_{p}$ is finite. Let $u_{\infty}^{p}(p\vert s)={\rm Pr}\{p \vert s\}$ and $ \delta(s, a)$ where $p \in \Omega_p, s \in S$ and $a \in A$, denote the decision variables to the following nonlinear constrained optimization problem. 
\begin{subequations}
\label{opt2}
\begin{align}
& \underset{u^{p}_{\infty}(p \vert s), \delta(s, a)}{\text{min}}
\sum_{(s, a) \in S \times A(s)}  \big ( \lambda \sum_{p \in \Omega_{p}} c_{p}(p)u^{p}_{\infty}(p \vert s) +c(s, a) \big) \delta(s, a) \label{opt2:obj} & &\\
& \text{subject to} & & \nonumber\\
&\sum_{s \in S}\frac{\sum_{a \in A(s)}{\rm Pr}\{s' \vert s, a\}\delta(s, a)}{\sum_{a \in A(s)}\delta(s, a)}\sum_{p \in \Omega_p}u_{\infty}^{p}(p\vert s')\theta(s, p) \leq \xi(T),  \label{opt2: safety-constraint} & &\\
&\sum_{a \in A(s)}\delta(s, a)= D_{0}(s)(1-\alpha)+\alpha \sum_{s' \in S} \sum_{a' \in A(s')}\delta(s', a'){\rm Pr}\{s | s', a' \}, \label{opt2: eq3} & &\\
& \sum_{s} \sum_{a}\delta(s, a)=1, \; \sum_{p \in \Omega_{p}}u^{p}_{\infty}(p \vert s)=1, \delta(s, a)\geq 0, \; u^{p}_{\infty}(p \vert s) \geq 0.  \label{opt2: eq2} & &
\end{align}
\end{subequations}
\end{problem}
The following Lemma shows that Problems \ref{problem: CNOP} and \ref{problem: CCSG} are equivalent in the sense that they have the same optimal solutions and objectives.
\begin{lemma}
\label{lem: problem-equivalence}
Let $\delta^{*}$ and $u^{p^{*}}_{\infty}$ denote the optimal solutions to Problem \ref{problem: CNOP}, then the policies $u^{p^{*}}_{\infty}=\pi^{p^{*}}_{\infty}$ and $\pi^{m^*}_{\infty}(a \vert s)={\rm Pr}\{a|s\}=\frac{\delta^{*}(s, a)}{\sum_{a \in A(s)}\delta^{*}(s, a)}$
are the optimal solutions to Problem \ref{problem: CCSG}.
\end{lemma}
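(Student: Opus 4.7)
The plan is to exploit the classical occupation-measure reformulation of discounted MDPs. Specifically, I will establish a bijection between stationary policy pairs $(\pi^m_\infty, \pi^p_\infty)$ feasible for Problem \ref{problem: CCSG} and feasible pairs $(\delta, u^p_\infty)$ of Problem \ref{problem: CNOP}, under which both the objective and the safety constraint are preserved. Once the bijection is in place, optimality of one problem transfers directly to optimality of the other, and the recovery formulas $u^{p^*}_\infty = \pi^{p^*}_\infty$ and $\pi^{m^*}_\infty(a\vert s) = \delta^*(s, a)/\sum_{a'\in A(s)}\delta^*(s, a')$ stated in the lemma simply express this correspondence.

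For the forward direction, given a stationary policy pair $(\pi^m_\infty, \pi^p_\infty)$, I would define the discounted state--action occupation measure
\[
\delta(s, a) := (1-\alpha) \sum_{k=0}^{\infty} \alpha^{k}\, {\rm Pr}\{s_k = s,\, a_k = a\},
\]
where the probability is taken under $\pi^m_\infty$ with initial distribution $D_0$. Standard MDP arguments show that $\delta$ satisfies the Bellman flow equation \eqref{opt2: eq3}, the normalization $\sum_{s,a}\delta(s,a) = 1$, and $\delta \geq 0$. Setting $u^p_\infty = \pi^p_\infty$ then completes a feasible pair, once the safety constraint is also verified. For the objective, using ${\mathbb E}[c(s_k, a_k)] = \sum_{s, a} c(s, a)\, {\rm Pr}\{s_k = s, a_k = a\}$ together with ${\mathbb E}[c_p(p_k)] = \sum_s \mu_s \sum_p c_p(p)\, u^p_\infty(p\vert s)$, where $\mu_s := \sum_a \delta(s,a)$, a short calculation yields
\[
J_\alpha(s_0, \pi^m, \pi^p) = \sum_{s, a}\Big( c(s, a) + \lambda\sum_{p}c_p(p)\,u^p_\infty(p\vert s) \Big)\delta(s, a),
\]
which is exactly \eqref{opt2:obj}.

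For the converse, given any feasible $(\delta, u^p_\infty)$ for Problem \ref{problem: CNOP}, I would recover $\pi^m_\infty(a\vert s) := \delta(s,a)/\sum_{a'\in A(s)} \delta(s, a')$ (whenever the denominator is positive, and arbitrarily otherwise) and set $\pi^p_\infty := u^p_\infty$. The flow equation \eqref{opt2: eq3} ensures that running $\pi^m_\infty$ from initial distribution $D_0$ reproduces $\delta$ as its discounted occupation measure, so the two maps are mutual inverses and the costs match identically. The last step is to verify that \eqref{opt2: safety-constraint} is the scalar counterpart of the matrix-norm bound $\|P_m(\pi^m, \pi^p)\,\text{diag}(\theta(s, p))\| \leq \xi(T)$ in \eqref{eq: opt}: observe that $\sum_a {\rm Pr}\{s'\vert s, a\}\,\pi^m_\infty(a\vert s)$ is precisely the one-step state-transition probability of the closed-loop chain, so the inner double sum in \eqref{opt2: safety-constraint} is the $(s, p)$-row-sum of the product matrix $P_m(\pi^m, \pi^p)\,\text{diag}(\theta(s,p))$.

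The main obstacle I anticipate is exactly this norm identification. The constraint \eqref{eq: opt} is written in operator-norm form, whereas \eqref{opt2: safety-constraint} is a pointwise inequality indexed only by the successor state $s'$; making the two rigorously equivalent requires committing to the max-row-sum (induced $\ell_\infty$) norm convention implicit in Theorem \ref{thm: almost-sure-safety}, and carefully unpacking how the factor $u^p_\infty(p\vert s')$ arises from conditioning on the next state before the next power level in the block structure of $P_m$ indexed by $(s, p)$ pairs. The objective equivalence and the policy-to-occupation-measure bijection are otherwise standard; the care is in this last identification.
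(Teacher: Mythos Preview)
Your proposal is correct and follows essentially the same approach as the paper: the occupation-measure reformulation of discounted constrained MDPs (the paper cites Altman's monograph for the flow constraint~\eqref{opt2: eq3}), the recovery formula $\pi^m_\infty(a\vert s)=\delta(s,a)/\sum_{a'}\delta(s,a')$, and the rewriting of the objective are all exactly what the paper sketches. Your treatment is in fact more careful than the paper's own proof, which does not explicitly verify the equivalence between the matrix-norm safety constraint in~\eqref{eq: opt} and the scalar family~\eqref{opt2: safety-constraint}; the identification you outline via the induced $\ell_\infty$-norm (cf.\ the Notations paragraph) and the block structure of $P_m$ indexed by $(s,p)$ pairs is the right way to close that gap.
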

\begin{proof}
The proof can be obtained by examining the equivalence between Problem \ref{problem: CNOP} and Problem \ref{problem: CCSG} in terms of objective function, decision variables and feasible set imposed by the constraints. We have already shown that the objective function in Problem \ref{problem: CCSG} can be rewritten as a function of the new decision variables $\{u_{p}(s, a)\}$ and $\{\delta(s, a)\}$ in Problem \ref{problem: CNOP}. According to the definition of $\delta(s, a)$, one has
$
{\rm Pr}\{a | s\}=\frac{{\rm Pr}\{a , s\}}{{\rm Pr}\{ s\}}
                       =\frac{\delta(s, a)}{\sum_{a \in A(s)} \delta(s, a)}
$. Thus, the decision variable $\delta(s, a)$ uniquely defines the control strategy $\pi^{m}$. The constraints in (\ref{opt2: eq2}) are introduced to enforce the probability law (i.e. non-negativity and total probability being $1$). The constraint in (\ref{opt2: eq3}) is a reformulation of the Markovian dynamics for the MDP in terms of new decision variables $\delta(s, a)$ and $u_{p}(s, a)$ (see \cite{altman1999constrained} for more details). Therefore, one has established the equivalence and the proof is complete. 
\end{proof}
	\begin{remark}
	Problem \ref{problem: CNOP} is a polynomial optimization problem where the objective function and safety constraints in \eqref{opt2: safety-constraint} are polynomial functions. The main challenge to solve this polynomial optimization problem is the fact that the safety constraints are non-convex. The presence of non-convex constraint \eqref{opt2: safety-constraint} in the optimization problem is due to the couplings between communication and control policies in industrial settings with \emph{state-dependent fading wireless channels}.
	\end{remark}
	\subsection{Relaxed Generalized Geometrical Programming}
\label{sec: GGP}
Problem \ref{problem: CNOP} falls into one type of non-convex optimization problem, called Generalized Geometric Program (GGP) \cite{maranas1997global} where the objective function and constraints are the difference of two \emph{posynomials}. A posynomial is a function such that $ G_{i}(x_{1}, x_{2}, \ldots, x_{n})=\sum_{j=1}^{L}a_{ij}x_{1}^{b_{ij1}}x_{2}^{b_{ij2}}\dots x_{n}^{b_{ijn}}$
where $a_{l} > 0, \forall l$ and $b_{ij} \in \mathbb{R}$. 

Let $X=[\delta(s_1, a_1), u_{\infty}^{p}(p_1 \vert s_1), \ldots, \delta(s_N, a_M), u_{\infty}^{p}(p_{\ell} \vert s_N)]^{T}$ denote the decision vector and $\Omega_{X} \subset \mathbb{R}^{NM\ell \times 1}_{+}$ denote the feasible region for $X$. The constrained optimization Problem \ref{problem: CNOP} can be formulated as a GGP as follows,
\begin{equation}
\begin{aligned}
& \underset{X}{\text{minimize}} 
&& G_{0}(X)=G_{0}^{+}(X) \\
& \text{subject to}
&& G_{i}(X)=G_{i}^{+}(X)-G_{i}^{-}(X) \leq 0, \quad i=1, \ldots, N \\
&&& G_{\text{linear}}(X) \leq 0, \quad  X \in \Omega_{X}
\end{aligned}
\end{equation}
where $G_{i}^{+}, G_{i}^{-}, i=1,2, \ldots, N$ are posynomials and $G_{\text{linear}}$ are linear functions. To see how safety constraints in \eqref{opt2: safety-constraint} can be written as the difference of two posynomials, multiplying both sides of \eqref{opt2: safety-constraint} by $\prod_{s \in S}\sum_{a \in A(s)}\delta(s, a)$ leads to 
\begin{align*}
&\underbrace{\sum_{a \in A(s)}{\rm Pr}\{s' \vert a, s\}\delta(s, a)\sum_{p \in \Omega_p}u_{\infty}^{p}(p \vert s')\theta(s, p) \prod_{\tilde{s}\neq s, \tilde{s} \in S}\sum_{a \in A(s)}\delta(\tilde{s}, a)}_{G_{i}^{+}(X)} \\
& - \underbrace{\xi(T)\prod_{s \in S}\sum_{a \in A(s)}\delta(s, a)}_{G_{i}^{-}(X)} \leq 0.
\end{align*}
The above GGP can be further reformulated by introducing an exponential transformation, $X=\exp(Z)$,
\begin{equation}
\label{opt: Z}
\begin{aligned}
& \underset{Z}{\text{minimize}} 
&& \tilde{G}_{0}(Z)=\tilde{G}_{0}^{+}-\tilde{G}_{0}^{-} \\
& \text{subject to}
&& \tilde{G}_{i}(Z)=\tilde{G}_{i}^{+}(Z)-\tilde{G}_{i}^{-}(Z) \leq 0, \quad i=1, \ldots, M \\
&&& G_{\text{linear}}(Z) \leq 0, \quad  Z \in \Omega_{Z}.
\end{aligned}
\end{equation}
where $\Omega_{Z}=\log(\Omega_{X}) \subset \mathbb{R}^{NM\ell \times 1}$, $G_{i}^{-}= \sum_{j \in L_{i}^{-}}a_{ij}\exp{\sum_{l=1}^{n}b_{ijl}z_{l}}$ and $G_{i}^{+}=\sum_{j \in L_{i}^{+}}a_{ij}\exp{\sum_{l=1}^{n}b_{ijl}z_{l}}.$ 

Since $\exp(Z)$ is a convex function in terms of $Z$, $\tilde{G}_{i}^{+}, \tilde{G}_{i}^{-}, i=0,1, \ldots, N$ and $G_{\text{linear}}$ are convex functions as well. However, the function $\tilde{G}_{i}^{+}(Z)-\tilde{G}_{i}^{-}(Z)$ in the safety constraint is generally not convex \cite{maranas1997global}. To address the non-convexity issues, this paper approximates the second terms $\tilde{G}_{i}^{-}$ in the non-convex safety constraints using a linear function. The basic idea is illustrated in Figure \ref{fig: bound-1} using a simple exponential function. In Figure \ref{fig: bound-1}, the linear function shown by the solid line upper approximates the exponential function while the linear function shown by the dashed line approximates the exponential function from below. These two functions can be viewed as upper and lower bounds on the exponential function. The following two subsections are devoted to demonstrate how to construct the upper and lower linear functions for a general multivariate exponential function $\tilde{G}_{i}^{-}(Z)$ for a given domain. 
\subsubsection{Relaxed GGP with Linear Upper Bound}
 For a given bounded domain $\Omega_{Z}=\{Z | Z \in [Z^{L}, Z^{H}]\}$ with $Z^{L}=[z^{L}_{1},\ldots, z^{L}_{n}]$ and $Z^{H}=[z^{H}_{1},\ldots, z^{H}_{n}]$, one can construct a linear function such that,
\begin{align}
&\tilde{G}_{i}^{-}(Z)  \leq A_{i}Z+B_{i} \nonumber\\
A_{i}&=\sum_{j \in L_{i}^{-}}a_{ij}A_{ij}[b_{ij1}, \ldots, b_{ijn}], \quad B_{i}=\sum_{j \in L_{i}^{-}}a_{ij}B_{ij} \label{eq: A-B}\\
A_{ij}&=\frac{\exp(Y_{ij}^{H})-\exp(Y_{ij}^{L})}{Y_{ij}^{H}-Y_{ij}^{L}}, \quad B_{ij}=\frac{Y_{ij}^{H}\exp(Y_{ij}^{L})-Y_{ij}^{L}\exp(Y_{ij}^{H})}{Y_{ij}^{H}-Y_{ij}^{L}} \nonumber\\
Y_{ij}^{L}&=\sum_{l=1}^{n}\min(b_{ijl}z_{l}^{L}, b_{ijl}z_{l}^{H}), \quad Y_{ij}^{H}=\sum_{l=1}^{n}\max(b_{ijl}z_{l}^{L}, b_{ijl}z_{l}^{H}) 
\label{def: Y}
\end{align}
By replacing $\tilde{G}_{i}^{-}(Z)$ with $A_{i}Z+B_{i}, \forall i=0, 1, \ldots, M$  in the transformed GGP (\ref{opt: Z}), one has the convex optimization problem as follows,
\begin{equation}
\label{opt: upper-bound}
\begin{aligned}
& \underset{Z}{\text{minimize}} 
&& \tilde{G}_{0}^{U}(Z)=\tilde{G}_{0}^{+}(Z) \\
& \text{subject to}
&& \tilde{G}_{i}^{U}(Z)=\tilde{G}_{i}^{+}(Z)-(A_{i}Z+B_{i}) \leq 0, \quad i=1, \ldots, N \\
&&& G_{\text{linear}}(Z) \leq 0, \quad  Z \in \Omega_{Z}
\end{aligned}
\end{equation}
Let $\delta_{ij}=Y_{ij}^{H}-Y_{ij}^{L}$ denote the interval width associated with term $j$ in $\tilde{G}_{i}^{-}$ and $\delta_{i}=\max_{j \in L_{i}^{-}}\delta_{ij}$ denote the maximum interval width over all terms in $\tilde{G}_{i}^{-}$. Let $\Delta_{i}(Z)=A_{i}Z+B_{i}-G_{i}^{-}(Z)$ denote the gap between $\tilde{G}_{i}^{-}$ and $A_{i}Z+B_{i}$ and $\Delta_{i}^{*}=\max_{Z \in \Omega_{Z}}\Delta_{i}(Z)$ denote the maximum gap. The following lemma characterizes the explicit relationship between the maximum gap $\Delta_{i}^{*}$ and the size of the region of approximation $\delta_{i}$  \cite{maranas1997global},  
\begin{lemma}
\label{lem: maximum-gap}
Consider the transformed posynomial functions $\tilde{G}_{i}^{-}(Z)$ and its upper approximation $A_{i}Z+B_{i}$ with the region of approximation $\Omega_{Z}$, then, for all $Z \in \Omega_{Z}$, the maximum gap $\Delta_{i}^{*}, \forall i=1, \ldots, N$ defined over $\Omega_{Z}$ is a function of $\delta_{i}$ as follows,
\begin{align*}
\Delta_{i}^{*} &\leq \sum_{j \in L_{i}^{-}}e^{Y_{ij}^{L}}\bigg(1-\Theta(\delta_{ij})+\Theta(\delta_{ij})\log(\Theta(\delta_{ij}))\bigg) \\
                     &\leq |L_{i}^{-}| e^{Y_{i}^{L}}\bigg(1-\Theta(\delta_{i})+\Theta_{i}\log(\Theta(\delta_{i})\bigg)
\end{align*} 
where $e^{Y_{i}^{L}}=\max_{j \in L_{i}^{-}} e^{Y_{ij}^{L}}$ and $\Theta(\delta)=\frac{e^{\delta}-1}{\delta}$. Furthermore, one has
$
\Delta_{i}^{*} \sim \mathcal{O}(\delta_{i}^{2})
$.
\end{lemma}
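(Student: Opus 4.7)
The plan is to decompose the multivariate gap into independent univariate pieces, compute the maximum secant-gap of a single exponential in closed form, and read off the asymptotic order from a Taylor expansion. First I would write $\Delta_i(Z)=\sum_{j\in L_i^-}a_{ij}\bigl(A_{ij}Y_{ij}(Z)+B_{ij}-\exp(Y_{ij}(Z))\bigr)$ with $Y_{ij}(Z):=\sum_{l}b_{ijl}z_l$, and observe from the definition of $Y_{ij}^L,Y_{ij}^H$ in \eqref{def: Y} that $Y_{ij}(Z)\in[Y_{ij}^L,Y_{ij}^H]$ for all $Z\in\Omega_Z$. Since $e^y$ is convex and $A_{ij}y+B_{ij}$ is its secant over that interval, each summand is pointwise non-negative, so the maximum of the sum is bounded by the sum of the per-term maxima: $\Delta_i^*\le\sum_{j}a_{ij}\max_{y\in[Y_{ij}^L,Y_{ij}^H]}(A_{ij}y+B_{ij}-e^y)$.

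Next I would solve each one-dimensional subproblem exactly. Setting the derivative $A_{ij}-e^y$ to zero gives the unique interior maximiser $y^\star=\ln A_{ij}$, which lies inside $(Y_{ij}^L,Y_{ij}^H)$ because $A_{ij}=e^{Y_{ij}^L}\Theta(\delta_{ij})$ with $\Theta(\delta):=(e^{\delta}-1)/\delta\in(1,e^\delta/\delta)$ for $\delta>0$. Evaluating the gap at $y^\star$ and using the direct algebraic identity $A_{ij}Y_{ij}^L+B_{ij}=e^{Y_{ij}^L}$ (immediate from the definitions of $A_{ij}$ and $B_{ij}$) yields a per-term gap of $e^{Y_{ij}^L}\bigl(1-\Theta(\delta_{ij})+\Theta(\delta_{ij})\log\Theta(\delta_{ij})\bigr)$, which after summation produces the first inequality in the lemma statement.

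For the second inequality I would establish monotonicity of $h(\delta):=1-\Theta(\delta)+\Theta(\delta)\log\Theta(\delta)$ on $[0,\infty)$. Writing $H(t):=1-t+t\log t$, one computes $H'(t)=\log t\ge 0$ for $t\ge 1$, while $\Theta(\delta)\ge 1$ and $\Theta$ is itself non-decreasing in $\delta$ (a short calculation showing $\Theta'(\delta)=\bigl(e^\delta(\delta-1)+1\bigr)/\delta^2\ge 0$). Combining with $\delta_{ij}\le\delta_i$ and $e^{Y_{ij}^L}\le e^{Y_i^L}:=\max_{j\in L_i^-}e^{Y_{ij}^L}$ gives the termwise uniform bound $|L_i^-|\,e^{Y_i^L}h(\delta_i)$. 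Finally, for the asymptotic rate I would Taylor-expand $\Theta(\delta)=1+\delta/2+\delta^2/6+\mathcal{O}(\delta^3)$ and $\log\Theta(\delta)=\delta/2+\delta^2/24+\mathcal{O}(\delta^3)$; the leading $\delta$ contributions cancel in $h(\delta)$, leaving $h(\delta)=\delta^2/8+\mathcal{O}(\delta^3)$ and hence $\Delta_i^*=\mathcal{O}(\delta_i^2)$. The main care needed is bookkeeping rather than any conceptual obstacle: tracking the coefficients $a_{ij}$ through the decomposition and confirming that each maximiser is interior rather than a boundary endpoint, both of which reduce to convexity of $e^y$.
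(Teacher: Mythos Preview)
Your proposal is correct and follows essentially the same route as the paper: decompose the gap term-by-term, locate each one-dimensional maximiser at $y^\star=\log A_{ij}$, substitute to obtain the closed form $e^{Y_{ij}^L}\bigl(1-\Theta(\delta_{ij})+\Theta(\delta_{ij})\log\Theta(\delta_{ij})\bigr)$, invoke monotonicity for the uniform bound, and Taylor-expand for the $\mathcal{O}(\delta_i^2)$ rate. Your version is in fact slightly tidier than the paper's in two places: you reduce to a scalar problem in $y=Y_{ij}(Z)$ from the outset rather than differentiating in $Z$, and you make the monotonicity step explicit by factoring it as $H'(t)=\log t\ge 0$ composed with $\Theta'(\delta)\ge 0$, whereas the paper only cites monotonicity of $\Theta$ and leaves the outer composition implicit.
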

\begin{proof}
The proof is included in Appendix \ref{appendix: proof}.
\end{proof}
%
\subsubsection{Relaxed GGP with Linear Lower Bound}
Similar to the case of upper bound,a lower bound for the transformed monomial function $\tilde{G}_{ij}^{-}(Z)$ can also be constructed as follows,
\begin{align}
&\tilde{G}_{i}^{-}(Z) \geq A_{i}Z+B_{i}^{L} \nonumber \\
B_{i}^{L}&= \sum_{j \in L_{i}^{-}} a_{ij} A_{ij}\bigg(1-\log(A_{ij})\bigg)
\label{eq: B_{i}^{L}}
\end{align}

By replacing $\tilde{G}_{i}^{-}(Z)$ with $A_{i}Z+B_{i}^{L}, \forall i=0, 1, \ldots, M$  in the transformed GGP (\ref{opt: Z}), one has the following convex optimization with linear lower bounds,
\begin{equation}
\label{opt: lower-bound}
\begin{aligned}
& \underset{Z}{\text{minimize}} 
&& \tilde{G}_{0}^{L}(Z)=\tilde{G}_{0}^{+}(Z)-(A_{0}Z+B_{0}^{L}) \\
& \text{subject to}
&& \tilde{G}_{i}^{L}(Z)=\tilde{G}_{i}^{+}(Z)-(A_{i}Z+B_{i}^{L}) \leq 0, \quad i=1, \ldots, M \\
&&& G_{\text{linear}}(Z) \leq 0, \quad  Z \in \Omega_{Z}
\end{aligned}
\end{equation}
\begin{figure}[!tbp]
\begin{subfigure}{.5\textwidth}
  \centering
  \includegraphics[width=.5\linewidth]{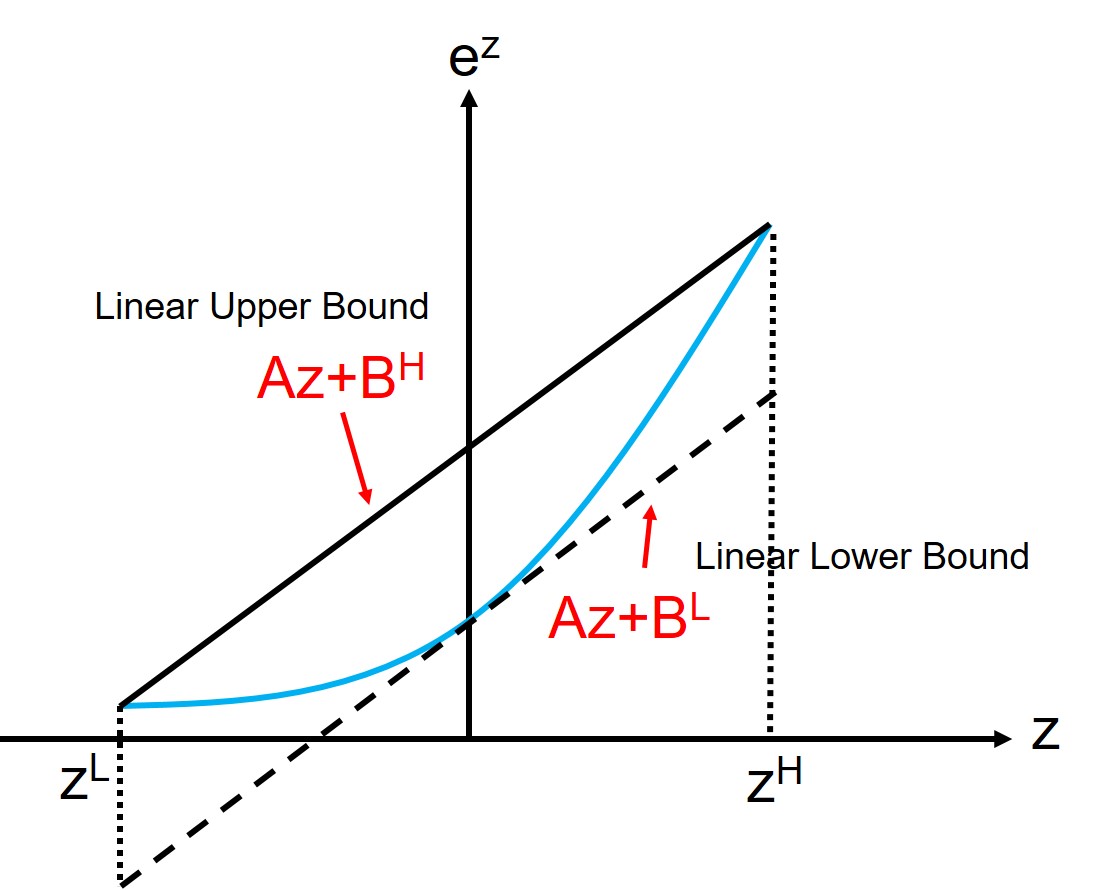}
  \caption{Linear lower and upper approximation}
  \label{fig: bound-1}
\end{subfigure}
\begin{subfigure}{.5\textwidth}
  \centering
  \includegraphics[width=.5\linewidth]{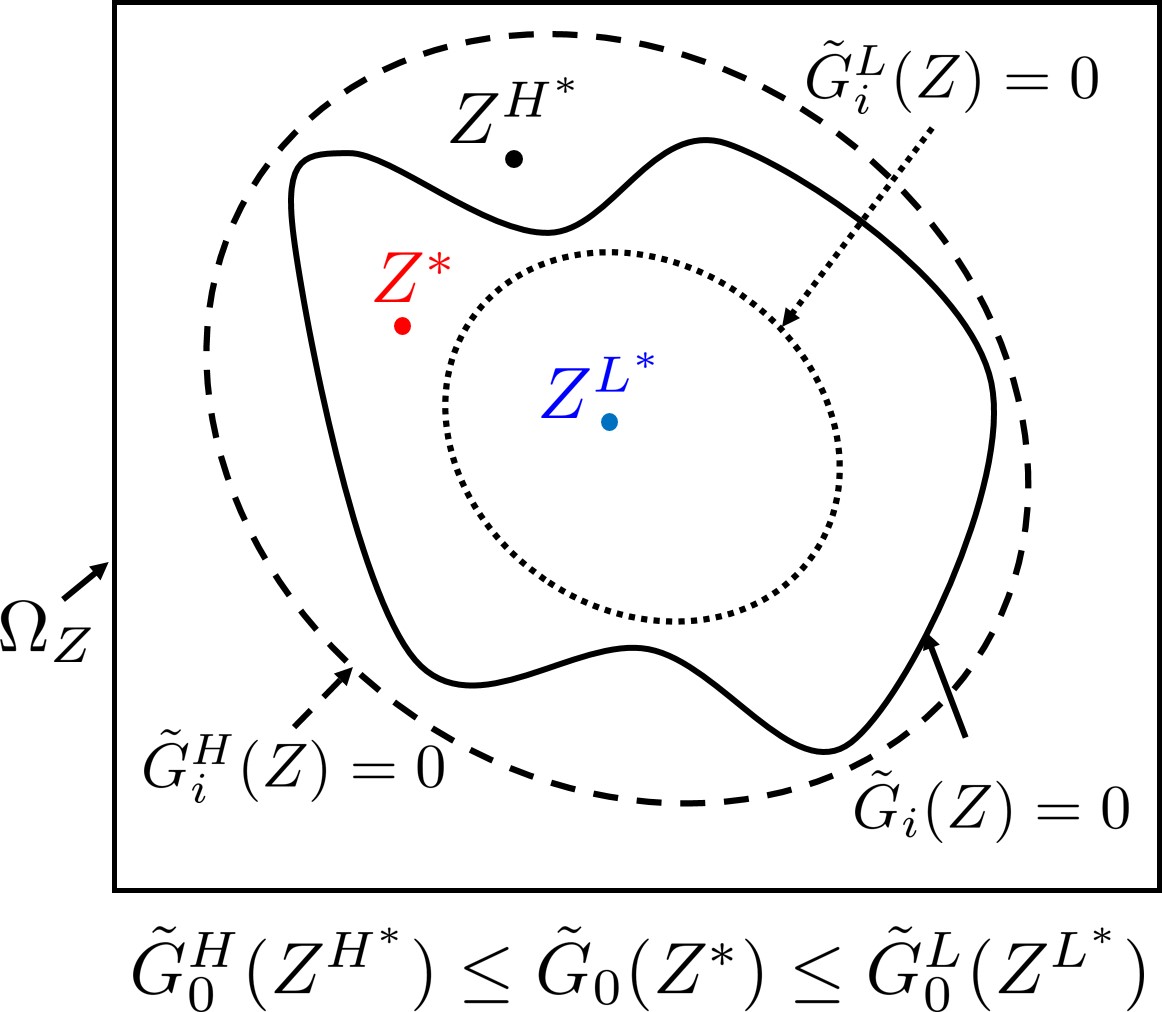}
  \caption{Inner and outer convex regions}
  \label{fig: bound-2}
\end{subfigure}
\caption{Lower and upper bounds by two relaxed convex GGPs}
\label{fig: bound}
\end{figure}

The following lemma shows that the maximum gap of for the lower bound case is the same as the upper bound case. 
\begin{lemma}
\label{lem: lower-bound}
Consider the GGP problem (\ref{opt: Z}) and the relaxed GGP (\ref{opt: lower-bound}) with lower bound linear function $A_{i}Z+B_{i}^{L}, i=0,1,\ldots, M$. Let $\Delta_{i}^{L^{*}}$ denote the maximum gap defined over the domain $\Omega_{Z}$, then 
$\Delta_{i}^{L^{*}} = \Delta^{*}_{i}$
and 
$\Delta_{i}^{L^{*}}=\mathcal{O}(\delta_{i}^{2}) \quad \text{as} \ \delta \rightarrow 0 $.
\end{lemma}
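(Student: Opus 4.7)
The plan is to exploit the symmetry between the secant-over-exponential construction underlying Lemma \ref{lem: maximum-gap} and the tangent-under-exponential construction used here, and to prove the stronger pointwise statement that $\Delta_i^{L^{*}}$ equals $\Delta_i^{*}$ term-by-term. Once this equality is established, the $\mathcal{O}(\delta_i^2)$ asymptotics are inherited from Lemma \ref{lem: maximum-gap}.

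First, I would reduce to a single monomial. Both $\tilde{G}_i^-$ and its lower approximation decompose additively across $j\in L_i^-$:
\[
\tilde{G}_i^-(Z)-(A_i Z+B_i^L)=\sum_{j\in L_i^-}a_{ij}\bigl[\exp(Y_{ij}(Z))-A_{ij}Y_{ij}(Z)-A_{ij}(1-\log A_{ij})\bigr],
\]
where $Y_{ij}(Z)=\sum_l b_{ijl}z_l$. Since $Y_{ij}$ is linear in $Z$, its range over the box $\Omega_Z=[Z^L,Z^H]$ is exactly $[Y_{ij}^L,Y_{ij}^H]$ by \eqref{def: Y}, so the supremum over $\Omega_Z$ can be computed one coordinate at a time.

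Second, for each monomial I define $h_{ij}(Y):=\exp(Y)-A_{ij}Y-A_{ij}(1-\log A_{ij})$. Because $\exp$ is convex, $h_{ij}$ is convex in $Y$, so on the compact interval $[Y_{ij}^L,Y_{ij}^H]$ its maximum is attained at an endpoint. A brief computation shows the two endpoint values coincide: by the definition of $A_{ij}$ as the secant slope,
\[
\exp(Y_{ij}^H)-A_{ij}Y_{ij}^H=\exp(Y_{ij}^L)-A_{ij}Y_{ij}^L=B_{ij},
\]
so $h_{ij}(Y_{ij}^L)=h_{ij}(Y_{ij}^H)=B_{ij}-A_{ij}+A_{ij}\log A_{ij}$.

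Third, I compare with the upper-bound gap. In Lemma \ref{lem: maximum-gap} the maximum of $(A_{ij}Y+B_{ij})-\exp(Y)$ on $[Y_{ij}^L,Y_{ij}^H]$ is attained at the interior critical point $Y^{*}=\log A_{ij}$, giving $A_{ij}\log A_{ij}+B_{ij}-A_{ij}$, which is exactly the endpoint value of $h_{ij}$ derived above. Multiplying by $a_{ij}$ and summing over $j\in L_i^-$ yields $\Delta_i^{L^{*}}=\Delta_i^{*}$, and the $\mathcal{O}(\delta_i^2)$ behaviour then follows verbatim from the second bound in Lemma \ref{lem: maximum-gap}.

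I do not expect a serious obstacle here: the proof is essentially an algebraic identity that reflects the geometric ``mirror'' relationship between the secant cord of a convex function and the tangent parallel to that cord. The only point requiring care is the observation that the range of the affine map $Z\mapsto Y_{ij}(Z)$ over the axis-aligned box $\Omega_Z$ is precisely the interval $[Y_{ij}^L,Y_{ij}^H]$, justifying the term-by-term reduction; this is routine since linear functions on a box attain their extremes at vertices, and the $\min/\max$ definition in \eqref{def: Y} exactly captures these extremes.
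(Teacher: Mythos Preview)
Your proposal is correct and is exactly the argument the paper has in mind: the paper's own proof is simply ``similar to the upper bound case and is omitted here,'' and you have carried out that program explicitly by reducing to monomials, exploiting convexity to locate the maximizer, and matching the resulting value $B_{ij}-A_{ij}+A_{ij}\log A_{ij}$ with the interior-critical-point value from Lemma~\ref{lem: maximum-gap}. The only caveat is that summing the term-by-term identities $\Delta_{ij}^{L^{*}}=\Delta_{ij}^{*}$ yields equality of the \emph{upper bounds} $\sum_{j}\Delta_{ij}^{L^{*}}=\sum_{j}\Delta_{ij}^{*}$ rather than of $\Delta_i^{L^{*}}$ and $\Delta_i^{*}$ themselves; this is harmless for the $\mathcal{O}(\delta_i^2)$ conclusion and is the same level of precision used in the paper's treatment of Lemma~\ref{lem: maximum-gap}.
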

\begin{proof}
The proof is similar to the upper bound case and is omitted here. 
\end{proof}

The following lemma shows that the optimal solutions to the two convex optimizations in (\ref{opt: upper-bound}), (\ref{opt: lower-bound}) are lower and upper bounds to the original non-convex problem in (\ref{opt: Z}).
\begin{lemma}
\label{lem: lower-upper-GGP}
Let $Z^{H^{*}}, Z^{*}$ and $Z^{L^{*}}$ denote the optimal solution to the optimization problems in (\ref{opt: upper-bound}), (\ref{opt: Z}) and (\ref{opt: lower-bound}) respectively, the optimal objective functions then satisfy 
\begin{align}
\label{ineq: obj-ggp}
\tilde{G}_{0}^{H}(Z^{H^{*}}) \leq \tilde{G}_{0}(Z^{*}) \leq \tilde{G}_{0}^{L}(Z^{L^{*}})
\end{align}
and the solution $Z^{L^{*}}$ is a suboptimal solution to the non-convex optimization problem in (\ref{opt: Z}). Let $\overline{\Delta}_{0}:=\tilde{G}_{0}(Z^{L^{*}})-\tilde{G}_{0}(Z^{*})$ denote the gap between the suboptimal and optimal solutions, this gap then has upper upper bound as $ \overline{\Delta}_{0} \leq \tilde{G}_{0}^{L}(Z^{L^{*}})-\tilde{G}_{0}^{H}(Z^{H^{*}})$.
\end{lemma}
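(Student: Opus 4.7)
The plan is to reduce the lemma to two elementary facts: a pointwise sandwich between the three families of functions, and a feasibility nesting among the three problems. The sandwich comes straight from the constructions \eqref{eq: A-B} and \eqref{eq: B_{i}^{L}}, which give $A_i Z + B_i^L \leq \tilde{G}_i^-(Z) \leq A_i Z + B_i$ for every index $i$ and every $Z \in \Omega_Z$. Subtracting this chain from $\tilde{G}_i^+(Z)$ reverses the inequalities and yields $\tilde{G}_i^U(Z) \leq \tilde{G}_i(Z) \leq \tilde{G}_i^L(Z)$ for $i = 0, 1, \dots, M$, where $\tilde{G}_i^U$ is the function used in \eqref{opt: upper-bound} (written $\tilde{G}_i^H$ in the lemma statement). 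I would be explicit about this sign flip, since it is the only subtle bookkeeping in the argument.

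Applied to the constraint functions ($i \geq 1$), the sandwich immediately gives the feasibility nesting. Namely, if $\tilde{G}_i^L(Z) \leq 0$ then $\tilde{G}_i(Z) \leq \tilde{G}_i^L(Z) \leq 0$; and if $\tilde{G}_i(Z) \leq 0$ then $\tilde{G}_i^U(Z) \leq \tilde{G}_i(Z) \leq 0$. Hence every feasible point of \eqref{opt: lower-bound} is feasible for \eqref{opt: Z}, and every feasible point of \eqref{opt: Z} is feasible for \eqref{opt: upper-bound}. The shared affine constraints $G_{\text{linear}}(Z) \leq 0$ and the common box $\Omega_Z$ transfer across the three problems without comment.

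With these two facts, the chain \eqref{ineq: obj-ggp} assembles in two lines. For the left inequality, $Z^{*}$ is feasible for \eqref{opt: upper-bound} by the nesting, so optimality of $Z^{H^{*}}$ together with the pointwise bound on the objective yields $\tilde{G}_0^U(Z^{H^{*}}) \leq \tilde{G}_0^U(Z^{*}) \leq \tilde{G}_0(Z^{*})$. For the right inequality, $Z^{L^{*}}$ is feasible for \eqref{opt: Z} by the nesting, so optimality of $Z^{*}$ and the pointwise bound give $\tilde{G}_0(Z^{*}) \leq \tilde{G}_0(Z^{L^{*}}) \leq \tilde{G}_0^L(Z^{L^{*}})$. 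Feasibility of $Z^{L^{*}}$ for the original problem is precisely the suboptimality assertion, and subtracting the two halves of the chain produces the gap bound $\overline{\Delta}_0 = \tilde{G}_0(Z^{L^{*}}) - \tilde{G}_0(Z^{*}) \leq \tilde{G}_0^L(Z^{L^{*}}) - \tilde{G}_0^U(Z^{H^{*}})$. No step is deep; the only care needed is tracking the sign flip noted in the first paragraph, which I would display as an aligned two-step derivation in the written-out proof to prevent the reader from having to rederive it.
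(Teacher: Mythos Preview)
Your proposal is correct and follows essentially the same approach as the paper: establish the pointwise sandwich $\tilde{G}_i^{U}(Z)\le \tilde{G}_i(Z)\le \tilde{G}_i^{L}(Z)$, read off the feasibility nesting $\mathcal{C}_v^{L}\subset \mathcal{C}_v\subset \mathcal{C}_v^{H}$, and combine the two to get \eqref{ineq: obj-ggp} and the gap bound. The paper's proof simply asserts the sandwich and the nesting and chains the inequalities, whereas you spell out how the sign flip in $\tilde{G}_i^{+}-(\cdot)$ produces the sandwich and how each constraint implication works; this extra detail is fine and does not change the argument.
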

\begin{proof}
Let $\mathcal{C}_{v}^{H}$, $\mathcal{C}_{v}$ and $\mathcal{C}_{v}^{L}$ denote the feasible sets that are generated by the constraints in optimization problems (\ref{opt: upper-bound}), (\ref{opt: Z}) and (\ref{opt: lower-bound}) respectively. Since $\mathcal{C}_{v}^{L} \subset \mathcal{C}_{v} \subset \mathcal{C}_{v}^{H}$ and $\tilde{G}_{0}^{H}(Z) \leq \tilde{G}_{0}(Z) \leq \tilde{G}_{0}^{L}(Z)$ hold for any $Z \in \Omega_{Z}$, then one has $\tilde{G}_{0}(Z^{L^{*}}) \leq \tilde{G}_{0}^{L}(Z^{L^{*}})$.
By the definition of $Z^{*}$ and $\mathcal{C}_{v}^{L} \subset \mathcal{C}_{v}$, one further has $
\tilde{G}_{0}(Z^{*}) \leq \tilde{G}_{0}(Z^{L^{*}}) \leq \tilde{G}_{0}^{L}(Z^{L^{*}})$.
The same argument can also be applied to prove $\tilde{G}_{0}^{H}(Z^{H^{*}}) \leq \tilde{G}_{0}(Z^{*}) $.  By Inequality (\ref{ineq: obj-ggp}), the final result holds.
\end{proof}
\subsection{Branch-Bound Algorithm}
This section presents a \emph{Branch-Bound} method under which the lower and upper bounds of the non-convex GGP Problem in \eqref{opt: Z} asymptotically approaches the optimal solutions.
\subsubsection{Branch Procedure}
The branch procedure involves partitioning the hyper-rectangular domain $\Omega_{Z}$ into two small sub-regions under which two convex optimization problems in (\ref{opt: lower-bound}) and (\ref{opt: upper-bound}) are solved. Let $\Omega_{Z}^{i,j}=\{Z \in \mathbb{R}^{n} | Z \in [Z^{L, ij}, Z^{H, ij}]\}$ denote the $j^{th}$ ($j=1,2$) sub-region at the $i^{th}$ stage, where $Z^{L, ij}$ and $Z^{H, ij}$ represent the boundaries of the rectangular constraint $\Omega_{Z}^{i, j}$. For the sub-region $\Omega_{Z}^{i,j}$, let $Z^{H^{*}, ij}, Z^{L^{*}, ij}$ denote the optimal solutions to the problems in (\ref{opt: upper-bound}) and (\ref{opt: lower-bound}). Then, $\tilde{G}_{0}(Z^{H^{*}, ij})$ and $\tilde{G}_{0}(Z^{L^{*}, ij})$ are the corresponding lower bound and upper bound on  $\tilde{G}_{0}(Z^{*})$. Clearly, the upper bound solutions $Z^{L^{*}, ij}$ are always feasible for the original GGP problem while the lower bounds $Z^{H^{*}, ij}$ are not necessarily feasible solutions. In order to obtain tight bounds, the upper and lower bounds are iteratively updated by
\begin{align}
\label{eq: upper-bound}
&\tilde{G}^{UB}_{0}=\begin{cases}
\min \Big\{\tilde{G}_{0}^{UB, i-1}, \tilde{G}_{0}^{LB, i-1}, \{\tilde{G}_{0}(Z^{L^{*}, ij})\}_{j=1,2}\Big\}, \\
\hfill \text{if $\tilde{G}^{LB, i-1}$  is feasible}\\
\min \Big\{\tilde{G}_{0}^{UB, i-1}, \{\tilde{G}_{0}(Z^{L^{*}, ij})\}_{j=1,2}\Big\}, \quad \text{Otherwise}
\end{cases} \\
\label{eq: lower-bounds}
&\tilde{G}^{LB, i}_{0}=\begin{cases}
\big\{\tilde{G}^{LB, i-1}_{0}, \tilde{G}_{0}(Z^{H^{*}, ij})\big\}, \quad \text{if $\tilde{G}_{0}(Z^{H^{*}, ij}) < \tilde{G}^{UB}_{0}, j=1, 2$} \\
\tilde{G}^{LB, i-1}_{0}, \quad \text{Otherwise}
\end{cases}
\end{align}  
The  upper bound $\tilde{G}^{UB}_{0}$ in \eqref{eq: upper-bound} is the minimum feasible solutions up to stage $i$. The $\tilde{G}^{LB, i}_{0}$ is a set of all possible lower bounds that could be used to approach the global optimum. At each stage, the branch procedure selects the region that has the minimum lower bounds, i.e.
\begin{align}
\label{eq: branch-selection}
(l, j)=\arg \min_{0 \leq l \leq i, j=1, 2} \tilde{G}^{LB, i}_{0}
\end{align}
where $(l, j)$ represents the index of the selected region. Thus, the  ``best'' lower bound up to stage $i$ is
\begin{align}
\tilde{G}^{LB}_{0}=\tilde{G}_{0}(Z^{H^{*}, l j})
\label{eq: lower-bound}
\end{align}
The selected region is then partitioned into two smaller regions $\Omega_{Z}^{i+1, j}, j=1,2$ by a bisection of the longest side of the hyper-rectangular.
Two convex optimization problems in (\ref{opt: lower-bound}) and (\ref{opt: upper-bound}) are then constructed based on the new regions $\Omega_{Z}^{i+1, j}, j=1,2$. The lower bound set $\tilde{G}_{0}^{LB, i}$ is further updated by removing current ''best'' lower bound $\tilde{G}_{0}(Z^{H^{*}, l j})$,
\begin{align}
\tilde{G}_{0}^{LB, i} \gets \tilde{G}_{0}^{LB, i} \setminus \tilde{G}_{0}(Z^{H^{*}, l j})
\end{align}
This branch procedure repeats until the gap between the lower and upper bounds is smaller than some specified threshold $\epsilon_{c}$. 
\subsubsection{Bound Procedure}
The bound procedure is to cut those branches that have no feasible solutions or do not contain the global optimum. The criteria to determine which branch can be safely fathomed are based on the monotonicity analysis for the structure of the constraint and objective functions in the relaxed GGP formulation (\ref{opt: lower-bound}). To be specific, consider the following lower bounds for the original $\tilde{G}_{i}^{U}(Z)$, $\forall i=0,1,\ldots, m$ and $\forall Z \in \Omega_{Z}^{\ell, j}$
\begin{align}
\tilde{G}_{i}^{U}(Z) \geq \underline{\tilde{G}}_{i}^{U}  \tilde{G}_{i}^{+}(Y_{i}^{L, \ell j})-\sum_{k \in L_{-i}}a_{ik}A_{ik}^{\ell j}Y_{i}^{H, \ell j}-B_{i}.  
\label{ineq: feasibility}
\end{align}
where $Y_{i}^{L, \ell j}, Y_{i}^{H, \ell j}$ and  $A_{ik}^{\ell j}$ are defined in (\ref{def: Y}) for the region $\Omega_{Z}^{\ell, j}$. $B_{i}$ is defined in (\ref{eq: A-B}). A branch associated with the above bounds can be removed if
\begin{itemize}
\item there exists any $\ell$ such that for any $i \in [1,2, \ldots, m], j \in \{1, 2\}$, the bounds in (\ref{ineq: feasibility}) are positive
\item there exists any $\ell$ such that $\underline{\tilde{G}}_{0}^{U} \geq \tilde{G}_{0}^{UB}$
\end{itemize} 
\begin{remark}
The first condition is used to test whether the convex domain generated by the branch procedure contains any feasible solutions, which is a necessary condition for feasibility test. The second condition is used to eliminate branches that do not contain the global optimum. 
\end{remark}
\subsubsection{Sub-optimality and Distance to Global Optimality}
Obtaining an exact global optimum for a non-convex optimization problem is generally NP-hard \cite{semprog}, which means that ``brute force'' type of searching algorithms are necessary to find global solutions. Hence, it is reasonable to expect suboptimal solutions but with certain performance guarantee. Here, the performance refers to the explicit distance characterization between optimal solutions and suboptimal solutions generated by the branch-bound method. Specifically, we show that the optimality gap can be predicted by measuring the maximum size of the super-rectangular where the sub-optimal solutions locate. This prediction gives rise to an upper bound on the maximum number of stages needed in the Branch-Bound algorithm to achieve the desired optimality gap. 
\begin{theorem}
\label{theorem: GGP}
Consider the non-convex GGP problem in (\ref{opt: Z}), relaxed convex problems in (\ref{opt: upper-bound}) and (\ref{opt: lower-bound}) and the Branch-Bound algorithm, let $Z^{*}$, $Z^{H^{*}}$ and $Z^{L^{*}}$ denote the optimal solutions for the optimization problems (\ref{opt: Z}), (\ref{opt: upper-bound}) and (\ref{opt: lower-bound}) respectively, let $
\delta \coloneqq \max_{1 \leq i \leq m, j \in L_{-i}}(Y_{ij}^{H}-Y_{ij}^{L} )$ denote the maximum size of the super-rectangular region, then the suboptimal solutions $Z^{H^{*}}$ and $Z^{L^{*}}$ asymptotically converge to optimal solution $Z^{*}$ as the maximum size $\delta \rightarrow 0$. Moreover, if the constraint qualification $\exists h_{i} \in \mathbb{R}^{n},  \nabla \tilde{G}_{i}(Z^{*}) h_{i} < 0,  \forall i=1, 2, \ldots, M$ holds at $Z^{*}$, then, one has
\begin{align}
|Z^{H^{*}}-Z^{*}|=\mathcal{O}(\delta) \quad \text{as} \ \delta \rightarrow 0 \label{eq: Z^{H^{*}}-Z^{*}}\\
|Z^{*}-Z^{L^{*}}|=\mathcal{O}(\delta) \quad \text{as} \ \delta \rightarrow 0 
\label{eq: Z^{*}-Z^{L^{*}}}
\end{align} 
Furthermore, let $D_{B}$ denote the depth of a full binary tree generated by the BB algorithm,  then the maximum $D_{B}$ to achieve a desired optimality gap $\delta^{*}$ is 
$
D_{B} \sim \log_{2}(\ceil[\Big]{\frac{\delta^{0}}{\delta^{*}}}^{n}+1)
$ where $\delta^{0}$ is the maximum size of the initial super-rectangular region.
\end{theorem}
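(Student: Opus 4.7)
The plan is to establish the three claims in sequence: asymptotic convergence of $Z^{H^{*}}$ and $Z^{L^{*}}$ to $Z^{*}$ as $\delta\to 0$; the rate $\mathcal{O}(\delta)$ under the stated constraint qualification; and the bound on the depth $D_B$ of the branch-bound tree.

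For the qualitative convergence, I would chain together Lemmas \ref{lem: maximum-gap}, \ref{lem: lower-bound} and \ref{lem: lower-upper-GGP}. The pointwise gap $|\tilde{G}_{i}^{-}(Z)-(A_{i}Z+B_{i})|$ and its lower-bound analogue are both $\mathcal{O}(\delta^{2})$ uniformly over the current hyper-rectangle, and the objective and constraint functions in the relaxed programs \eqref{opt: upper-bound} and \eqref{opt: lower-bound} differ from those in \eqref{opt: Z} only through these two linearization terms. Consequently the feasible sets satisfy $\mathcal{C}_{v}^{L}\subseteq \mathcal{C}_{v}\subseteq \mathcal{C}_{v}^{H}$ with Hausdorff distance shrinking as $\delta\to 0$, and the three optimal values are squeezed via $\tilde{G}_{0}^{H}(Z^{H^{*}})\leq \tilde{G}_{0}(Z^{*})\leq \tilde{G}_{0}^{L}(Z^{L^{*}})$ with $\tilde{G}_{0}^{L}(Z^{L^{*}})-\tilde{G}_{0}^{H}(Z^{H^{*}})=\mathcal{O}(\delta^{2})$. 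Since $\Omega_{Z}$ is compact and every function in sight is continuous, any limit point of $\{Z^{L^{*}}\}$ or $\{Z^{H^{*}}\}$ along a sequence $\delta_{k}\to 0$ must be feasible and optimal for \eqref{opt: Z}; a standard subsequence/continuity argument then gives convergence to $Z^{*}$.

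For the $\mathcal{O}(\delta)$ rate under the constraint qualification $\nabla \tilde{G}_{i}(Z^{*})h_{i}<0$, I would recognize the condition as Mangasarian--Fromovitz and invoke classical NLP sensitivity theory (Robinson's strong regularity, or the Lipschitz stability results of Bonnans--Shapiro). Viewing \eqref{opt: upper-bound} and \eqref{opt: lower-bound} as perturbations of \eqref{opt: Z} with constraint and objective perturbations of size $\mathcal{O}(\delta^{2})$ by Lemmas \ref{lem: maximum-gap}--\ref{lem: lower-bound}, MFCQ at $Z^{*}$ produces a stability modulus $\kappa>0$, independent of $\delta$ for $\delta$ small, such that the minimizers of the perturbed programs lie within $\kappa\cdot \mathcal{O}(\delta^{2})$ of $Z^{*}$. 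Since $\mathcal{O}(\delta^{2})\subseteq \mathcal{O}(\delta)$, \eqref{eq: Z^{H^{*}}-Z^{*}} and \eqref{eq: Z^{*}-Z^{L^{*}}} follow.

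For the depth estimate, I would exploit the bisect-the-longest-side rule. After $D$ bisections along a root-to-leaf path, in the worst case no coordinate has been halved more than $\lceil D/n\rceil$ times, so every leaf rectangle has maximum side length at least $\delta^{0}/2^{\lceil D/n\rceil}$. To drive this below the target gap $\delta^{*}$ one needs $\lceil D/n\rceil \geq \log_{2}(\delta^{0}/\delta^{*})$, i.e.\ $D\geq n\log_{2}(\delta^{0}/\delta^{*})$. Interpreting this as the depth of a full binary tree whose leaves cover the initial hyper-rectangle by sub-cubes of side $\delta^{*}$, one needs at least $\lceil \delta^{0}/\delta^{*}\rceil^{n}$ leaves, giving $2^{D_{B}}\geq \lceil \delta^{0}/\delta^{*}\rceil^{n}+1$ and hence $D_{B}\sim \log_{2}(\lceil \delta^{0}/\delta^{*}\rceil^{n}+1)$. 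The main obstacle will be the Lipschitz-stability step: verifying that Robinson's regularity hypotheses hold uniformly in the perturbation and that the MFCQ directions at $Z^{*}$ remain descent directions for the perturbed constraints once $\delta$ is sufficiently small; everything else reduces to bookkeeping around the explicit $\mathcal{O}(\delta^{2})$ estimates from Lemma \ref{lem: maximum-gap}.
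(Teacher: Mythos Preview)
Your proposal is correct and follows essentially the same route as the paper: cast the relaxed programs \eqref{opt: upper-bound} and \eqref{opt: lower-bound} as $\mathcal{O}(\delta^{2})$ perturbations of \eqref{opt: Z} via Lemmas \ref{lem: maximum-gap}--\ref{lem: lower-bound}, invoke Bonnans--Shapiro perturbation theory under the directional-regularity/MFCQ condition to obtain the $\mathcal{O}(\delta)$ solution rate, and count leaves of the bisection tree for the depth bound. One small caution: MFCQ alone generically yields only H\"older-$\tfrac{1}{2}$ stability of minimizers rather than the Lipschitz modulus $\kappa$ you invoke, so the clean route to $\mathcal{O}(\delta)$ is $|Z^{*}(u)-Z^{*}|=\mathcal{O}(|u|^{1/2})=\mathcal{O}(\delta)$, which is exactly how the paper's citation of Theorem~4.53 in \cite{bonnans2013perturbation} should be read.
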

\begin{proof}
The proof is provided in Appendix \ref{appendix: proof}.
\end{proof}

	\section{Simulation Results}
\label{sec: example}
This section uses the example of  a two-link planar elbow arm and a forklift truck to demonstrate the effectiveness of our co-design framework in assuring safety and efficiency for factory automation systems.  The \emph{almost sure safety} is demonstrated via Monte Carlo simulations using the sufficient conditions in Theorem \ref{thm: almost-sure-safety}. Under the safety constraint, optimal results regarding the power management for robotic arm system and decision making in forklift trucks are provided to show the system's efficiency as a whole and the necessity of the co-design paradigm. 
\begin{figure}[!tbp]
	\begin{subfigure}{.5\textwidth}
				\centering
	\includegraphics[width=0.4\linewidth]{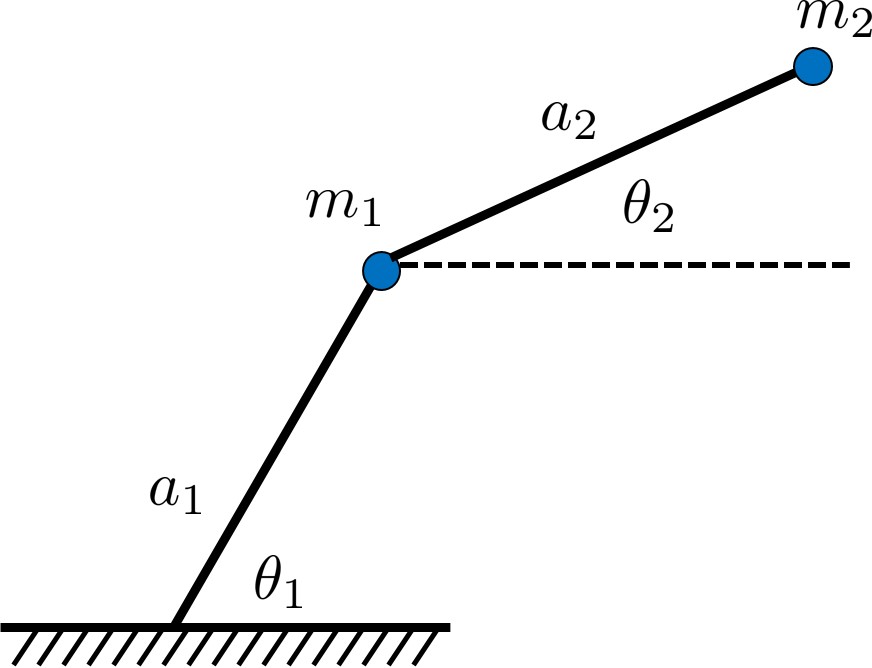}
	\caption{Two Link Planar Elbow Arm}
	\label{fig: arm}
	\end{subfigure}
	\begin{subfigure}{.5\textwidth}
				\centering
		\begin{tikzpicture}[->,>=stealth',shorten >=1pt,auto,node distance=2.2cm,
		  thick,main node/.style={circle,fill=blue!20,draw,font=\sffamily\Large\bfseries}]
		  \node[main node] (1) {$s_{1}$};
		  \node[main node] (2) [right of=1] {$s_{2}$};
		
		  \path[every node/.style={font=\sffamily\small}]
		    (1) edge [bend left] node {$1-u_{1}$} (2)
		        edge [loop above] node {$u_{1}$} (1)
		    (2) edge [bend left] node {$1-u_{2}$} (1)
		        edge [loop above] node {$u_{2}$} (2);
		\end{tikzpicture}
		\caption{The forklift truck: a two-state MDP}
		\label{fig: two-state-MDP}
	\end{subfigure}
	\caption{Simulation example of networked robotic manipulator and forklift truck}
	\label{fig: sim-example}
\end{figure}

Consider the system dynamics of a nonlinear two-link planar elbow arm as follows \cite{lewis2003robot},
\begin{align*}
&\underbrace{\left [ \begin{array}{c}
(m_{1}+m_{2})ga_{1}\cos{\theta_{1}}+m_{2}ga_{2}\cos(\theta_{1}+\theta_{2}) \\
m_{2}ga_{2}\cos(\theta_{1}+\theta_{2})
\end{array} \right ]}_{G(q)}+ \\
&\underbrace{\left [ \begin{array}{cc}
(m_{1}+m_{2})a_{1}^{2}+a_{2}m_{2}(a_2+2a_{1}\cos{\theta_{2}}) & m_{2}a_2(a_{2}+a_{1}\cos{\theta_{2}}) \\
m_{2}(a_{2}^{2}+a_{1}a_{2}\cos{\theta_{2}}) & m_{2}a_{2}^{2}
\end{array} \right ]}_{M(q)}\begin{bmatrix}    
\ddot{\theta}_{1} \\
\ddot{\theta}_{2}
\end{bmatrix} \\
&=\begin{bmatrix}
\tau_{1} \\
\tau_{2}
\end{bmatrix}-\underbrace{\left [ \begin{array}{c}
	-m_{2}a_{1}a_{2}(2\dot{\theta}_{1}\dot{\theta}_{2}+\dot{\theta}_{2}^{2})\sin{\theta_{2}} \\
	m_{2}a_{1}a_{2}\dot{\theta_{1}}^{2}\sin{\theta_{2}}
	\end{array} \right ]}_{V(q, \dot{q})}
\end{align*}
where $q=[\theta_{1};\theta_{2}]$ are the angles for the upper and lower links of the planar elbow arm as shown in Figure \ref{fig: arm} and $\dot{q}, \ddot{q}$ are the corresponding angular velocities and accelerations. The system inputs $\tau_{i}, i=1, 2$ are the external torque forces that are provided by either motors or hydraulic actuators \cite{lewis2003robot}. These forces $\tau_{i}, i=1,2$ are assumed to be generated by a remote controller, which uses the angular information $q, \dot{q}$ transmitted through a wireless communication channel. With the received angular information, the control objective of the robotic arm is to track a predefined desired trajectory.  

In the simulation, the length $a_{i}$ and mass weight $m_{i}, i=1,2$ for the upper and lower links are set to be $m_{1}=1, a_{1}=2, m_{2}=0.1, a_{2}=10$. The desired angular trajectories are defined as two sinusoidal signal: $q_{d}=[g_{1}\sin(2\pi f_{d}t); g_{2}\sin(2\pi f_{d}t)]$ with desired amplitude $g_{1}=g_{2}=.1$ and frequency $f_d=.5 s^{-1}$. The control input $F=[\tau_{1}; \tau_2]$ is computed by the following feedback linearization method \cite{lewis2003robot},
$
F=M(\hat{q})(\ddot{q}_d-K[\hat{q}-q_d; \hat{\dot{q}}-\dot{q}_d])+V(\hat{q}, \hat{\dot{q}}+G(\hat{q}))
$
where $\hat{q}, \hat{\dot{q}}$ are the estimates of the angular information depending on the real time channel conditions and $K$ is the controller matrix gain 
$
K=[
5, 0, 5, 0; 
0, 5, 0, 5
]
$.

The wireless communication channel used by the robotic arm is subject to shadow fading which is directly related to the physical position of the forklift truck. In the simulation,  the autonomous forklift system is modeled as a two state MDP as shown in Figure \ref{fig: two-state-MDP} where $s_{1}$ is the state representing the good channel region while the state $s_{2}$ characterizes the region causing shadow fading. $u_{i}, i=1,2$ are control strategies characterizing the probabilities of staying in state $s_{i}$ given the current state $s_{i}$, i.e. $u_{i}={\rm Pr}\{``\text{stay}"|s_{i}\}, i=1, 2$. With this state-dependent fading channel, the transmitter in the robotic arm can select high power $p_{H}$ level or low power $p_{L}$ level, to adjust the outage probability as shown in the channel model (\ref{eq: SDDC}). Table \ref{table: outage-probability} shows the outage probabilities $\theta(s, p)$ for different power levels and MDP states
\begin{table}
\centering
\begin{tabular}{c | c | c}
 (Power, State)  &  $s_{1}$ & $ s_{2}$ \\ \hline $ p_{L}$  &   0.4  &  0.9 \\  $p_{H}$  &  0.1  &  0.4  
\end{tabular}
\caption{Outage Probability $\theta(s, p)$ in SDDC (\ref{eq: SDDC})}
\label{table: outage-probability}
\end{table}
\begin{figure}[!tbp]
		\begin{subfigure}{.5\textwidth}
			\centering
			\includegraphics[width=.8\linewidth]{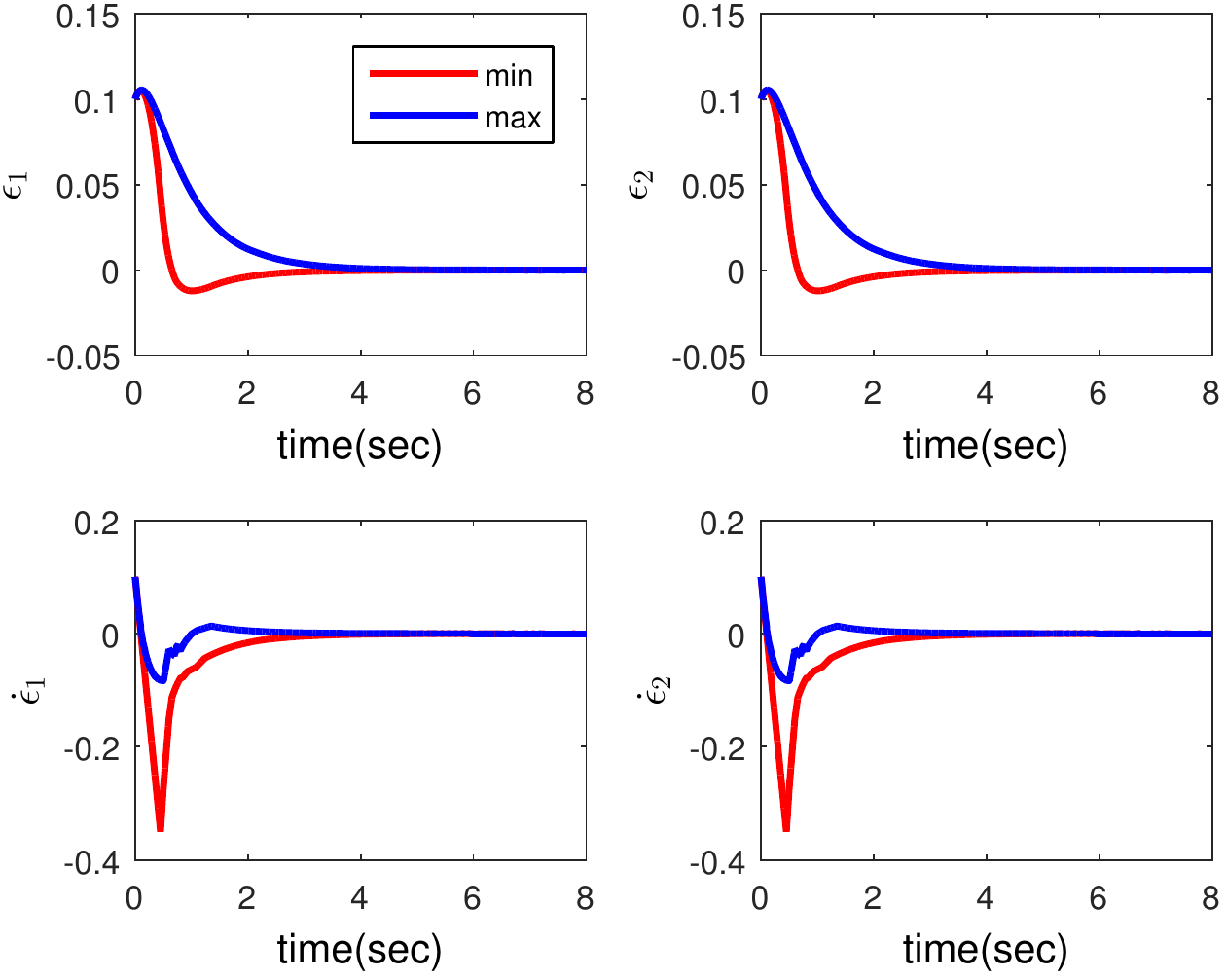}
			\caption{Max. and Min. value of tracking error: $\epsilon=q-q_{d}$}
			\label{fig: max-min-sample-path}
		\end{subfigure}
		\begin{subfigure}{.5\textwidth}
			\centering
			\includegraphics[width=.8\linewidth]{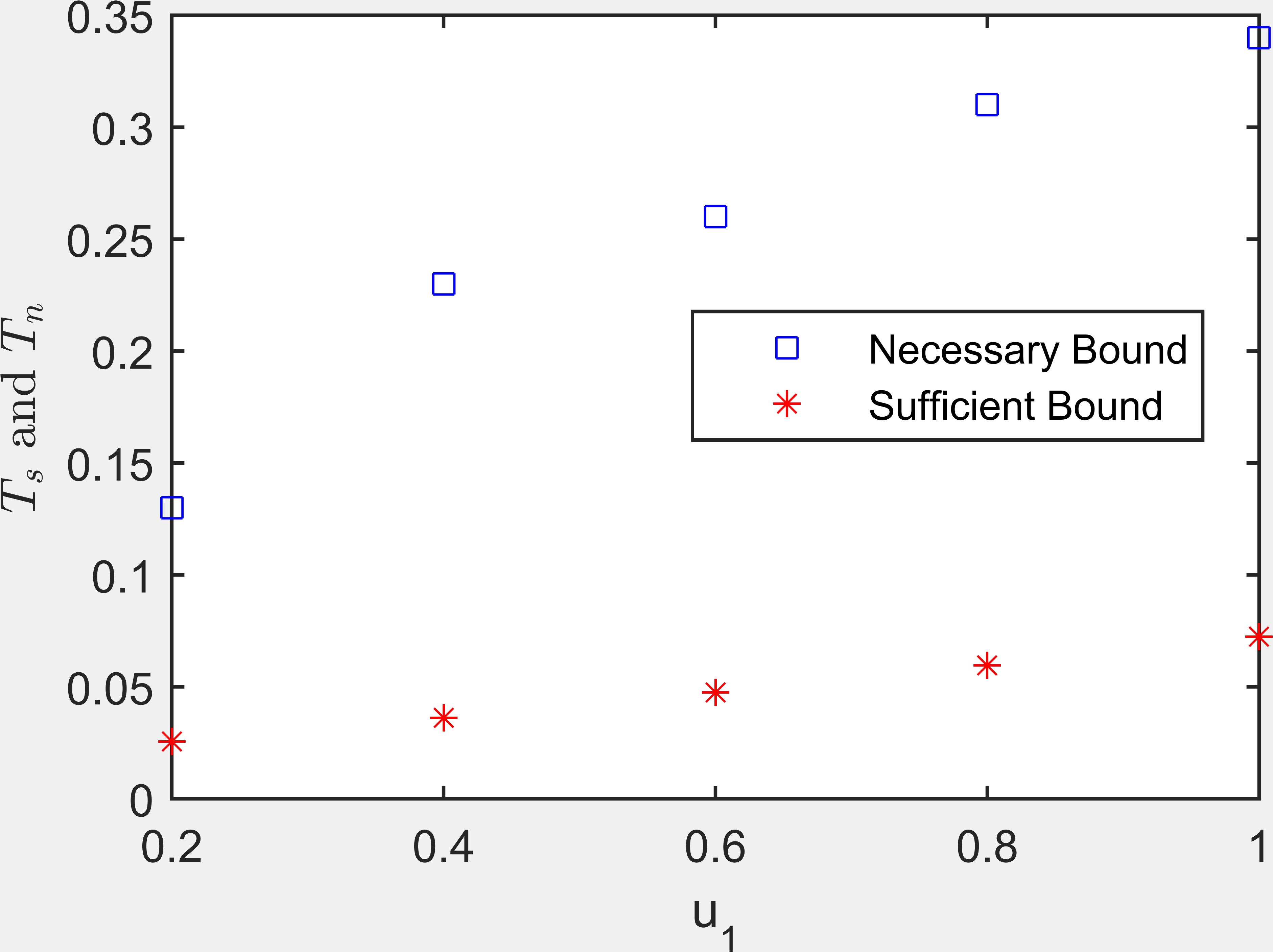}
			\caption{Sufficient~$T_s$ and necessary~$T_n$ bounds on MATI}
			\label{fig: sufficient-necessary}
		\end{subfigure}
			\caption{Almost surely convergence of tracking error on angular states (Left Figure \ref{fig: max-min-sample-path}); Comparison of sufficient and necessary MATI bounds under power and control strategies ${\rm Pr}\{p_{H} | s_{i}, i=1,2\}=1$  and  $u_{1}=0.2, 0.4, 0.6, 0.8, 1$~(Right Figure \ref{fig: sufficient-necessary}).}
\end{figure}
\subsection{Almost Sure Safety}
The first simulation result is to show almost sure safety for the two-link planar elbow arm system under the MATI in \eqref{eq: bound_T} as well as to investigate the tightness of the MATI. A Monte Carlo simulation method is used to generate $1000$ sample paths with each path being evolved over the same time interval from $0$ to $8$ seconds.  

The transmission time interval $T=0.05~s$ is selected to be smaller than the MATI bound $\tau^{*}$ under the control strategy $u_{1}=0.6, u_{2}=0.4$ and the power strategy ${\rm Pr}\{p_{H} | s_{i}, i=1,2\}=1$. Figure \ref{fig: max-min-sample-path} shows the maximum value marked by the blue line, and the minimum value marked by the blue line of the tracking errors ($e_{i}, \dot{e}_{i}, i=1, 2$) over the $1000$ sample paths. One can see from Figure \ref{fig: max-min-sample-path} that the maximum and minimum values of the tracking errors asymptotically converge to zero as time increases. This is precisely the behavior that one would expect if the system is almost surely asymptotically stable. These results, therefore, seem to confirm our sufficient condition in \eqref{eq: bound_T} for almost sure safety.

The tightness of the sufficient conditions is investigated by comparing them against ``necessary'' bounds, which are obtained by an exhaustive search method. This exhaustive search method is to find a lower bound on the MATI such that the system violates the almost sure safety property. The procedure of this searching method starts with the sufficient MATI $\tau^{*}$ bound in \eqref{eq: bound_T}, and then increases the value of $\tau^{*}$ until the maximum and minimum value of the sample path fail to converge to zero. The maximum value of $\tau$ that guarantees almost sure convergence in this search procedure would be the heuristic ''necessary'' bounds.

Figure \ref{fig: sufficient-necessary} shows the comparison of the sufficient MATI bounds~(red stars) obtained by \eqref{eq: bound_T} and necessary MATI bounds~(blue squares) generated by the exhaustive search method under different control strategies $u_{1}=0.2, 0.4, 0.6, 0.8, 1$ and $u_{2}=1-u_{1}$. As shown in the plot, the theoretical sufficient bounds are approximately $5$ times conservative than the heuristic necessary bounds. This performance gap is reasonably close provided that the robotic arm networked system is highly nonlinear. In fact, similar conservativeness (around $6-8$ times) were also reported for deterministic networked systems in \cite{nevsic2004input}. Our results can apply to a more general stochastic networked system but with similar gaps.

Note that $u_{1}$ represents the probability of staying in the good channel region while $u_{2}$ is the probability for the bad channel region where shadow fading occurs. Figure \ref{fig: sufficient-necessary} also shows that the MATI increases when the probability of a good channel increases. This observation matches precisely with the argument that the control strategies from one system do have strong impacts on the network reliability of other systems. This finding motivates our co-design paradigm where the sufficient bound derived in \eqref{eq: bound_T} is a safety constraint that both the robotic arm system and the forklift system must satisfy, to achieve system safety and efficiency.

\subsection{Safety and Efficiency: A Co-design Paradigm}
\label{simulation: efficiency}
This section demonstrates the effectiveness of the co-design paradigm by solving the constrained optimization problem in (\ref{opt2}). In particular, the simulations in this section consist of two parts. The first part is to show that the optimal solutions of the constrained optimization problem (\ref{opt2}) can be achieved asymptotically by solving relaxed convex GGPs using the branch-bound algorithm. The second part of the simulation is to show the necessity of our proposed co-design framework to achieve both system safety and efficiency by comparing it against the separation design framework proposed in \cite{gatsis2014optimal}. 

In the simulation setup, the costs $c(s, a)$ and $c_{p}(p)$ defined in the constrained optimization problem (\ref{opt2}) are shown in Table \ref{table: cost} to simulate a nontrivial scenario where the forklift truck is driven to the bad channel region $s_2$ to achieve its best interest. 
The co-design framework for the example of forklift and robotic arm is formulated as follows,

\begin{table}
\begin{minipage}{.5\textwidth}
\centering
\begin{tabular}{c|c|c}
(Action, State) & $s_{1}$ & $ s_{2}$ \\ \hline Stay &   1.5  &  1. \\  Go  &  .5  &  1 
\end{tabular}
\end{minipage}
\begin{minipage}{.5\textwidth}
\centering
\begin{tabular}{ccc}
Power Level & $p_{H}$ & $p_{L}$ \\ \hline Cost & 2 & .5 
\end{tabular}
\end{minipage}
\caption{Control and Power Cost}
\label{table: cost}
\end{table}

\begin{subequations}
	\label{opt: two-state-MDP}
	\begin{align}
		& \underset{\{x_{i}\}_{i=1}^{4}, \{y_{i}\}_{i=1}^{4}}{\textbf{Minimize}}
		& & \sum_{i=1}^{4}c_{i}x_{i}+2\lambda \sum_{j=1}^{2}(y_{2j-1}c_{p}(p_H) \nonumber \\
		& & & \quad + y_{2j}c_{p}(p_L))(x_{2j-1}+x_{2j}) \\
		& \textbf{subject to}
		& & \begin{cases}
			&(1-\alpha)x_{1}+x_{2}-\alpha x_{4}=(1-\alpha)\delta_{0} \\
			&x_{1}+x_{2}+x_{3}+x_{4}=1,\\
			& y_{2j-1}+y_{2j}=1, j=1, 2\\
			&y_{i} \geq 0, \, x_{i}\geq 0, \quad i=1,2,3,4
		\end{cases} \\
		& & & \begin{cases}
			&\frac{x_{1}y_{1}}{x_{1}+x_2}\theta(s_{1})+\frac{x_{4}y_3}{x_3+x_4}\theta(s_2) \leq c(T) \\
			&\frac{x_{1}y_2}{x_{1}+x_2}\theta(s_{1})+\frac{x_{4}y_4}{x_3+x_4}\theta(s_2) \leq c(T) \\
			&\frac{x_{2}y_1}{x_{1}+x_2}\theta(s_{1})+\frac{x_{3}y_3}{x_3+x_4}\theta(s_2) \leq c(T) \\
			&\frac{x_{2}y_2}{x_{1}+x_2}\theta(s_{1})+\frac{x_{3}y_4}{x_3+x_4}\theta(s_2) \leq c(T)
		\end{cases}
		\label{ineq: safety-constraint}
	\end{align}	
\end{subequations}
where $\{x_{i}\}$ and $\{y_{i}\}$ represent the decision variables related to the control and transmit power policies defined in Problem \ref{problem: CNOP}
$
x_{1}:=\delta(s_{1}, ``\text{Stay}") ,  y_{1}:={\rm Pr}\{p_{H} \vert s_{1}\},
x_{2}:=\delta(s_{1}, ``\text{Go}"), y_{2}:={\rm Pr}\{p_{L} \vert s_{1}\}, 
x_{3}:=\delta(s_{2}, ``\text{Stay}"),  y_{3}:={\rm Pr}\{p_{H} \vert s_{2}\},
x_{4}:=\delta(s_{2}, ``\text{Go}"), y_{4}:={\rm Pr}\{p_{L} \vert s_{2}\}.
$
The inequalities \eqref{ineq: safety-constraint} are the safety constraints and $\theta(s_{i})=\theta(s_{i}, p_{H})+\theta(s_{i}, p_{L})$ is the dropout probability at state $s_{i}$ whose value is shown in Table \ref{table: outage-probability}. As discussed in Section \ref{sec: efficiency}, the parameter $c(T)$ is a function of the transmission time interval $T$ and system parameters in the arm system (See Remark \ref{remark: safety-constraint}). Once $T$ is selected, $c(T)$ is a fixed value. $c_{i}$ is the system cost induced by the state in $x_{i}$, e.g. $c_{2}=c(s_{1}, ``\text{Go}")$ (See Table \ref{table: cost}). The other parameters in the simulation are: $\delta_0={\rm Pr}\{s(0)=s_{1}\}=0.5$, $\alpha=0.8$, and $\lambda=1$. 

By using the GGP formulation and the branch-bound algorithm discussed in Section \ref{sec: GGP}, Figure \ref{fig: lower_bounds}  shows that the lower bounds (blue dashed line) obtained by solving the relaxed convex GGP problem asymptotically approaches the optimal point (red dashed line) as the number of the iteration increases. This result confirms the arguments made in Theorem \ref{theorem: GGP} which state that the global optimal solution is asymptotically achieved by the branch-bound algorithm. 
\begin{figure}
\centering
\includegraphics[width=.8\linewidth]{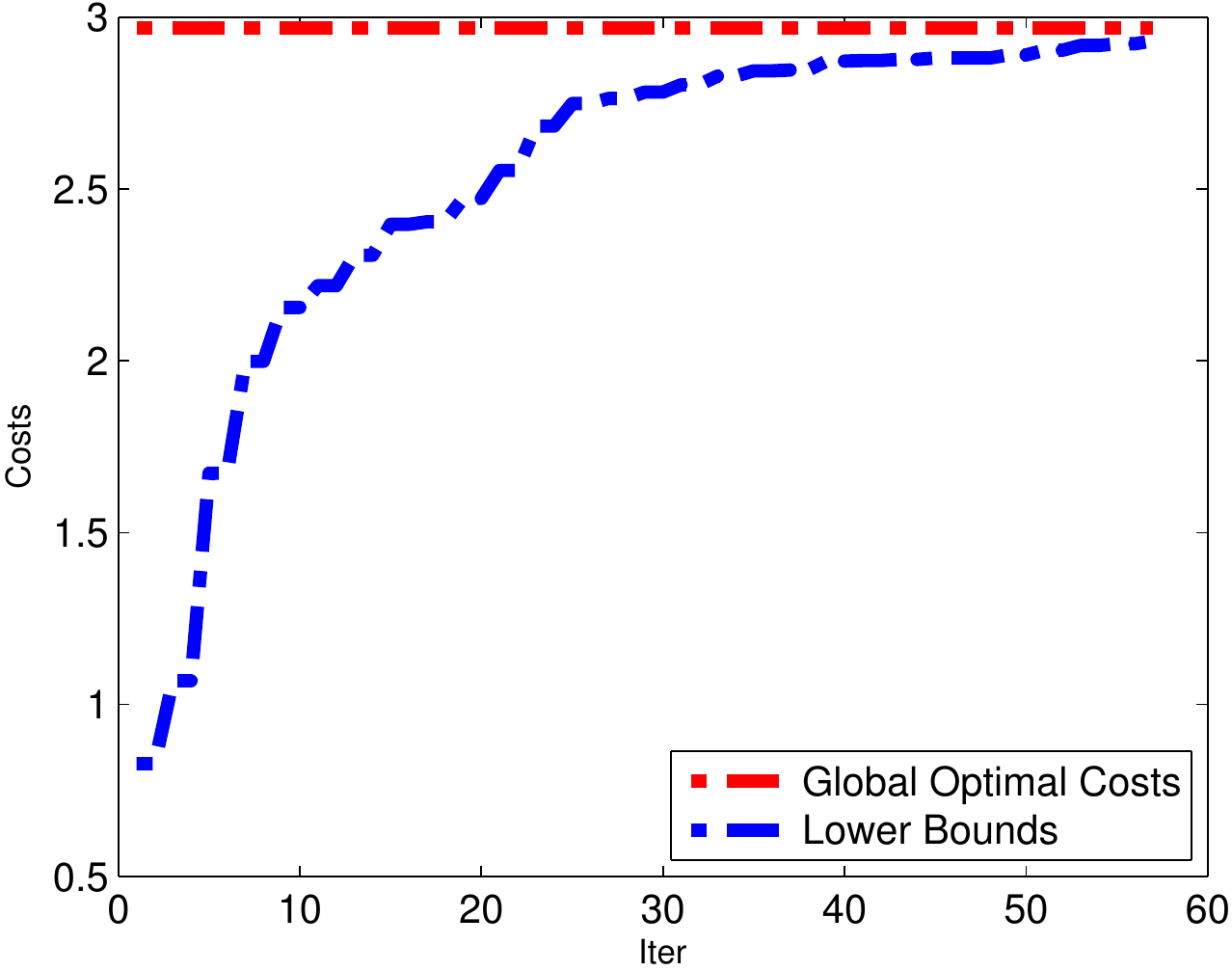}
\caption{Asymptotic Convergence of Lower Bounds to Global Optimum with $T=0.01$ sec}
\label{fig: lower_bounds}
\end{figure}
\begin{figure}
\centering
\includegraphics[width=.8\linewidth]{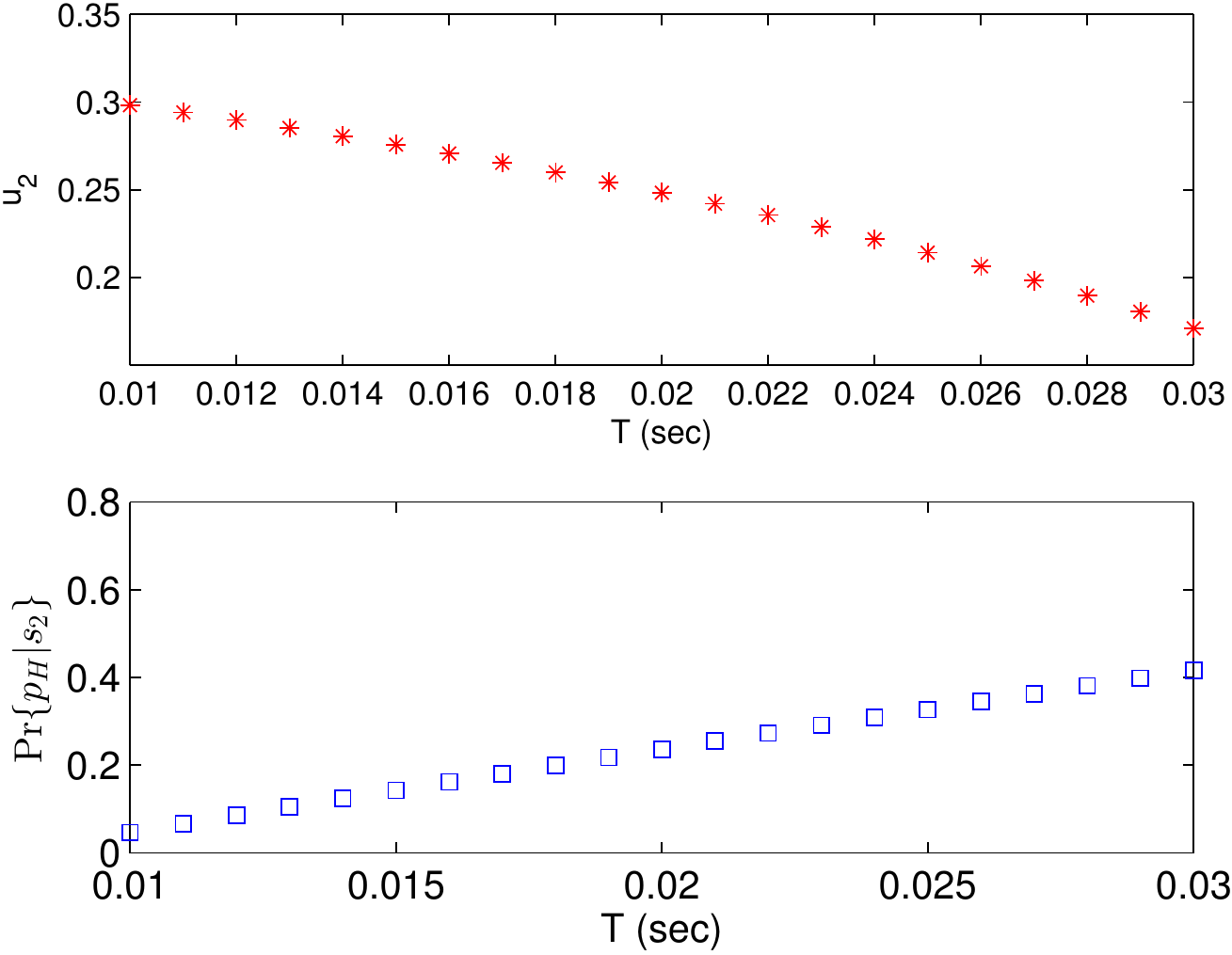}
\caption{Optimal control ($u_{2}:={\rm Pr}\{``\text{Stay}" | s_{2}\}$) and power policies (${\rm Pr}\{p_{H} | s_{2}\}$) at shadowing state $s_{2}$ for different transmission time intervals $T=0.01:0.001:0.03$.}
\label{fig: control_power_T}
\end{figure}

The feasible region enclosed by the safety constraints \eqref{ineq: safety-constraint} shows a tight coupling between the control policies, the transmit power policies, and the transmission time interval $T$. Figure \ref{fig: control_power_T} shows the changes of the optimal control and power strategies as a function of the transmission time interval $T$. In particular, the upper plot of Figure \ref{fig: control_power_T} shows that the optimal control policies $u_2:={\rm Pr}\{``\text{Stay}"| s_{2}\}$~(marked by red stars) for the forklift truck to stay at shadowing state $s_2$ monotonically decreases as the transmission time interval $T$ increases. The bottom plot of Figure \ref{fig: control_power_T} shows that the optimal probabilities~(marked by blue squares) of using high-level transmission power at bad channel region (${\rm Pr}\{p_{H} | s_{2}\}$) increase monotonically as the transmission time interval $T$ increases. These results imply that the overall system safety and efficiency is obtained by active coordinations between communication and control strategies.  

Figure \ref{fig: performance-comparison} shows the performance comparison between the proposed co-design framework and the separation design method under different transmission time intervals $T$~(Figure \ref{fig: comparision-codesign-T}) and different fading levels~(Figure \ref{fig: comparison-codesign-theta}). In this separation design framework, the control and communication polices are designed separately to optimize their own individual interests. In particular, the optimal control policies $u_{i}^{*}, i=1,2$ for the forklift truck are obtained by solving \replaced{ a linear program that is generated by eliminating the decision variables $y_{i}, i=1,2,3,4$ and safety constraint \eqref{ineq: safety-constraint} in the optimization problem \eqref{opt: two-state-MDP}}{the following optimization problem}
and the optimal solutions are $x_{1}^{*}=0, x_{2}^{*}=0.1, x_{3}^{*}=0.9, x_{4}^{*}=0$. Thus, the optimal control policies are $u_{2}^{*}=1, u_{1}^{*}=0$ with the optimal cost $0.55$. On the other hand, the optimal power policies for the robotic arm system are obtained by solving a linear programming problem as below that assumes the worst case impact of the forklift truck system,
	\begin{align}
	& \underset{\{y_{i}\}_{i=1}^{4}}{\textbf{Minimize}}
	& & (0.2y_{1}+1.8y_{3})c_{p}(p_{H})+(0.2y_{2}+1.8y_{4})c_{p}(p_{L}) \nonumber \\
	& \textbf{subject to}
	& & \begin{cases}
	& y_1+y_2=1 \\
	& y_3+y_4=1\\
	& y_1\theta(s_1)+ y_3 \theta(s_2) \leq c(T)\\
	& y_2\theta(s_1)+y_4 \theta(s_2) \leq c(T) \\
	& y_i \geq 0, \quad i=1,2,3,4
	\end{cases}
		\label{opt: separation-power}
	\end{align}	
Note that the safe region generated by the constraints \eqref{opt: separation-power} in the separation design problem is two times smaller than that generated by the co-design framework \eqref{ineq: safety-constraint}. Indeed, the selected transmission time interval $T$ in the co-design framework must satisfy $4c(T) > \theta(s_1)+\theta(s_2)$ to assure that the safe region is nonempty while the condition for the safe region to be nonempty in separation design is $2c(T) > \theta(s_1)+\theta(s_2)$. From the optimization's standpoint, although the co-design framework will for sure lead to better system performance than the separation design method due to its larger safe region, we are interested in investigating how the performance gap evolves as a function of  $T$ and the outage probability $\theta(s_2)$ under the co-design and separation design framework. The sensitivity analysis for these two frameworks is critical to ensuring a robust system design. 
 
Figure \ref{fig: comparision-codesign-T} shows the overall optimal performance (power costs + system costs in MDP) achieved by the co-design~(marked by red dashed line) and separation design~(marked by blue dashed line) methods under the transmission time intervals $T$ ranged from $0.001~sec$ to $0.006~sec$. As expected, the optimal costs generated by the co-design method over the entire time interval are smaller than that under the separation method. Moreover, the performance gap between these two methods increases as the $T$  increases from $0.001$~sec to $0.006$~sec. These results imply that the optimal performance achieved by the co-design method is less sensitive to the changes of $T$ than that achieved by the separation design method. It is worth noting that the constrained optimization problem in \eqref{opt: separation-power} for the separation design method will be infeasible if $T$ is larger than $0.006$ sec.  

\begin{figure}[!tbp]
	\begin{subfigure}{.5\textwidth}
	\centering
	\includegraphics[width=.7\linewidth]{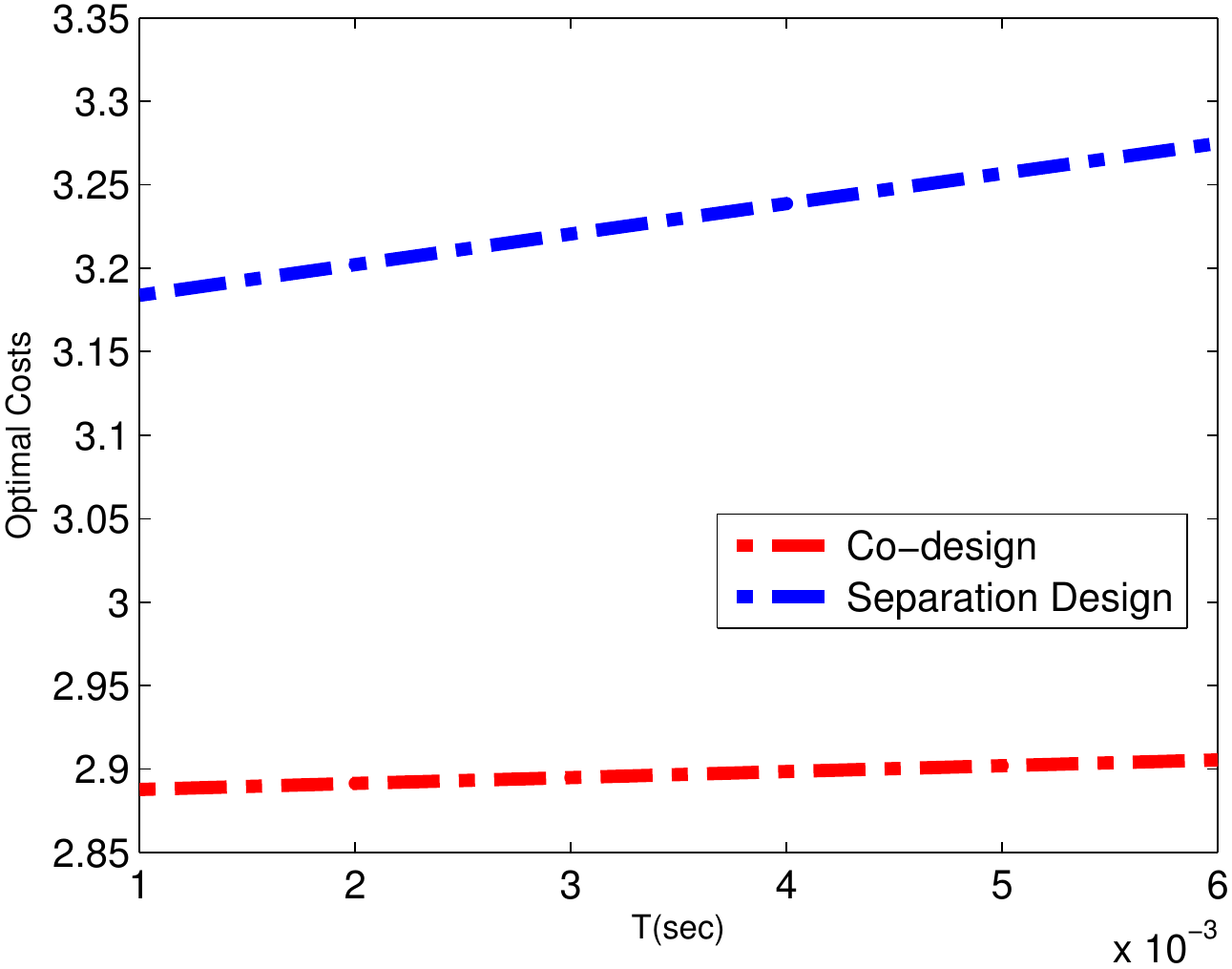}
	\caption{Optimal costs under different $T$}
	\label{fig: comparision-codesign-T}
	\end{subfigure}
		\begin{subfigure}{.5\textwidth}
		\centering
		\includegraphics[width=.7\linewidth]{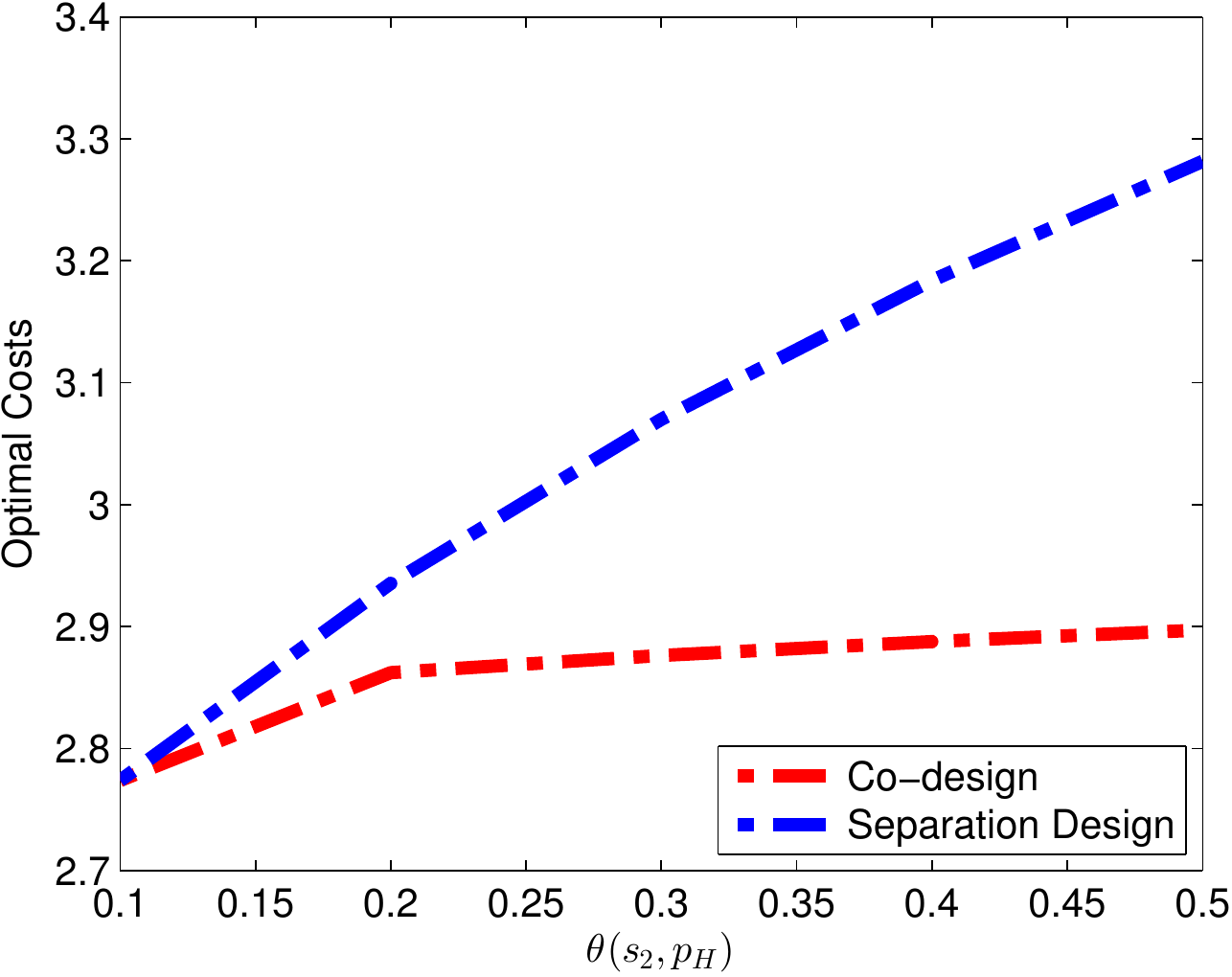}	
		\caption{Optimal costs under  different fading levels $\theta(s_{2}, p_{H})$}
		\label{fig: comparison-codesign-theta}	
		\end{subfigure}
		\caption{The comparison of the optimal performance  achieved by co-design and separation design frameworks under different transmission time intervals $T$ from $0.001$~sec to $0.006$~sec~(Figure \ref{fig: comparision-codesign-T}) and shadow fading levels $\theta(s_{2}, p_{H})=0.1:0.1:0.5$~(Figure \ref{fig: comparison-codesign-theta})} 
		\label{fig: performance-comparison}
\end{figure}

Figure \ref{fig: comparison-codesign-theta} shows the optimal performance comparison under different fading levels. In particular, the shadow fading level is categorized by different outage probability at the shadow state $s_2$. The value of the outage probability $\theta(s_2, p_H)$ at state $s_2$ is selected from $0.1$ to $0.5$ to simulate different levels of shadow fading. As shown in Figure \ref{fig: comparison-codesign-theta}, the optimal costs achieved by the co-design framework (marked by red dashed line) are smaller than the those obtained by the separation design method (marked by blue dashed line) under all fading levels. Furthermore,  the performance gap between these two methods is enlarged as the outage probability $\theta(s_2)$ in the bad channel region $s_2$ increases. In particular, the increase in optimal costs under the co-design framework flattens out even when the fading levels increases dramatically from $0.2$ to $0.5$.  This simulation result suggests that the co-design method is more robust against the shadow fading than the separation method, and is resilient to significant communication degrations. The resilience of the co-design framework is particularly important and useful in factory automation systems where serious shadow fading is often present in wireless links. 
	\section{Conclusion}
	\label{sec: conclusion}
	This paper examines the safety and efficiency of FANs in the presence of a \emph{shadow fading channel} that varies as a function of the physical states. Sufficient conditions on MATI are presented to assure \emph{almost sure asymptotic stability} without external disturbance and \emph{stochastic stability in probability} with non-vanishing external disturbance. These safety conditions are shown to be dependent on the transmission power and the control policies. This observation motivates us to develop a co-design paradigm to ensure system efficiency under the safety constraint. The problem of safety-efficiency co-design is then addressed by solving a two-player constrained cooperative game. Furthermore, we show that the optimal solution to the constraint cooperative game is equivalent to the solution of a non-convex GGP problem. Two relaxed convex GGP were formulated to provide upper and lower bounds on the optimal solution. These bounds asymptotically converge to the global optima by using a branch-bound algorithm. The simulation results of a networked robotic arm and a forklift truck are used to illustrate our findings. 
	
	\added{Our current paper focuses on the safety guarantee for the networked control system~($\mathcal{G}$ system) by co-designing efficient power policies and motion planning policies. It is, however, beyond the scope of this paper, if the objective of the co-design problem also includes ensuring system performance more than safety, e.g.,  optimal tracking control for the networked robotic arms. It is an important and interesting topic that will be pursued in our future work.}
	\appendix
\label{appendix: proof}
\begin{IEEEproof}[Proof of Theorem \ref{thm: almost-sure-safety}]
The techniques used to prove the main results are based on the small gain theorem \cite{isidori1995nonlinear} and Markovian jump system theory \cite{costa2006discrete}. One may view the stochastic hybrid system (\ref{eq: G_hat}) as two interconnected subsystems ($e$ and $x$) modulated with a stochastic jump process ($\{e(t_{k})\}$).  Let $\mathbb{I}_{k} \coloneqq [t_{k}, t_{k+1})$ denote the $k^{th}$ transmission time interval and $T_{k} \coloneqq \tau_{k+1}-\tau_{k}, \forall k \in \mathbb{N}^{+}$ denote the transmission time interval for $\mathbb{I}_{k}$. Consider the error dynamics over $\mathbb{I}_{k}$ and suppose Assumption \ref{assumption: e} holds, one can use comparison principle to bound the error trajectory as
$
W(e(t)) \leq e^{L_{1}(t-t_{k})} W(e(t_{k}^{+})) + \int_{t_{k}}^{t}e^{L_{1}(t-s)}L_{2}|x|ds.
$
Then, one has
\begin{align}
W(e(t_{k+1})) &\leq e^{L_{1}(t_{k+1}-t_{k})} W(e(t_{k}^{+})) + \int_{t_{k}}^{t_{k+1}}e^{L_{1}(t_{k+1}-s)}L_{2}|x|ds \nonumber\\
                      &\leq e^{L_{1}T_{k}}W(e(t_{k}^{+}))+ L_{2}|x|_{[t_{k}, t_{k+1})}\int_{t_{k}}^{t_{k+1}}e^{L_{1}(t_{k+1}-s)} ds \nonumber\\
                      &= e^{L_{1}T_{k}}W(e(t_{k}^{+}))+ \frac{L_{2}}{L_{1}}(e^{L_{1}T_{k}}-1)|x|_{[t_{k}, t_{k+1})}
\label{ineq: W(e(t_{k+1}))}
\end{align}
The second inequality holds because $|x|_{[t_{k}, t_{k+1})} \coloneqq \sup_{t_{k} \leq t < t_{k+1}}|x| \geq |x(\tau)|, \forall \tau \in \mathbb{I}_{k}$. Note that the inequality (\ref{ineq: W(e(t_{k+1}))}) holds for any given initial value $e(t_{k}^{+})$. Moreover, $\{e(t_{k}^{+})\}$ is a stochastic jump process that is governed by stochastic variations on the fading channel. Since the fading channel state in (\ref{eq: SDDC}) depends on the probability measure of MDP state $s$ and power state $p$, let $\mathbbm{1}_{A}$ denote the indicator function that takes value $1$ when sample value falls in set $A$ and takes value $0$ otherwise, then define the operator $W_{k+1}(s, p)\overset{\text{def}}{=}\mathbb{E}\{W(e(t_{k+1})) \mathbbm{1}_{s_{k+1}=s, p_{k+1}=p}\}$ as the expectation of the $W(e(t_{k+1}))$ over the set $\{s_{k+1}=s, p_{k+1}=p\}$. Since $W(e) \geq 0,  \forall e \in \mathbb{R}^{n}$, one can take this expectation operator on both sides of (\ref{ineq: W(e(t_{k+1}))}) without changing the sign,
\begin{align}
&W_{k+1}(s, p) \nonumber\\
&\leq e^{L_{1}T_{k}} \mathbb{E}\{W(e(t_{k}^{+})) \mathbbm{1}_{s, p} \} + \frac{L_{2}}{L_{1}}(e^{L_{1}T_{k}}-1) \mathbb{E}\{|x|_{[t_{k}, t_{k+1})} \mathbbm{1}_{s, p}\} \nonumber\\
&=e^{L_{1}T_{k}} \sum_{s' \in S, p' \in \Omega_{p}} \mathbb{E} \{W(e(t_{k}^{+})) \mathbbm{1}_{s', p'}\} {\rm Pr}\{s, p | s', p'\} \nonumber \\
&\quad +\frac{L_{2}}{L_{1}}(e^{L_{1}T_{k}}-1)|x|_{[t_{k}, t_{k+1})} \mathbb{E}\{\mathbbm{1}_{s, p}\} \label{eq: markovian-property} \\
&=e^{L_{1}T_{k}} \sum_{s' \in S, p' \in \Omega_{p}} W_{k}(s', p') \theta(s', p') {\rm Pr}\{s, p | s', p'\} \nonumber\\
&\quad + \frac{L_{2}}{L_{1}}(e^{L_{1}T_{k}}-1)|x|_{[t_{k}, t_{k+1})} \mathbb{E}\{\mathbbm{1}_{s, p}\} \label{eq: markovian-system}
\end{align}
The first Equality (\ref{eq: markovian-property}) holds due to the Markovian property of the MDP and power processes. The second Equality (\ref{eq: markovian-system}) holds as a result of Proposition \ref{proposition: exp-W} and
$
\mathbb{E} \{W(e(t_{k}^{+})) \mathbbm{1}_{s_k=s', p_k=p'}\}=\mathbb{E}\{W(e(t_{k}^{+})) | s_k=s, p_{k}=p'\} {\rm Pr}\{s_k=s', p_{k}=p'\}.
$
Let $W_{k}\coloneqq [W_{k}(s_{1}, p_{1}), W_{k}(s_{1}, p_{2}), \ldots, W_{k}(s_{|S|, |\Omega_{p}|})]^{T}$, then
\begin{align}
\label{ineq: W-system}
W_{k+1} \leq & e^{L_{1}T_{k}}P_{m}\text{diag}(\theta(s, p)) W_{k} \nonumber \\
&+ \frac{L_{2}}{L_{1}}(e^{L_{1}T_{k}}-1)|x|_{[t_{k}, t_{k+1})} [\mathbb{E}\{\mathbbm{1}_{s, p}\}]
\end{align}
where 
\begin{align*}
&\text{diag}(\theta(s, p)) \coloneqq \begin{bmatrix}
\theta(s_{1}, p_{1}) & \cdots & 0 & \cdots & 0 \\
\vdots & \ddots & \vdots & \ddots & \vdots \\
0 & \cdots & \theta(s_{i}, p_{j}) & \cdots & 0 \\
\vdots & \ddots & \vdots & \ddots & \vdots \\
0 & \cdots & 0 & \cdots & \theta(s_{N}, p_{M})
\end{bmatrix} \\
&P_{m}(\pi_{\infty}^{m}, \pi_{\infty}^{p})\coloneqq \begin{bmatrix}
{\rm Pr}(s_{1}, p_{1} | s_{1}, p_{1}) & \cdots & {\rm Pr}(s_{1}, p_{1} | s_{N}, p_{M}) \\
{\rm Pr}(s_{1}, p_{2} | s_{1}, p_{1}) & \cdots & {\rm Pr}(s_{2}, p_{1} | s_{N}, p_{M}) \\
\vdots & \vdots & \vdots \\
{\rm Pr}(s_{N}, p_{M} | s_{1}, p_{1}) & \cdots & {\rm Pr}(s_{N}, p_{M} | s_{N}, p_{M}) \\
\end{bmatrix}
\end{align*}
and $[\mathbb{E}\{\mathbbm{1}_{s, p}\}] \coloneqq [\mathbb{E}\{\mathbbm{1}_{s_{1}, p_{1}}\} \cdots 	\mathbb{E}\{\mathbbm{1}_{s_{i}, p_{j}}\} \cdots \mathbb{E}\{\mathbbm{1}_{s_{|S|}, p_{|\Omega_{p}|}}\} ]^{T}$.
Since both sides of (\ref{ineq: W-system}) are positive, taking the $\infty$-norm on both sides of (\ref{ineq: W-system}) leads to
\begin{align}
&|W_{k+1}| \nonumber \\
&\leq  e^{L_{1}T_{k}}\underbrace{\|P_{m} \text{diag}(\theta(s, p))\|}_{P_{\infty}}|W_{k}| \nonumber \\
& \quad + \frac{L_{2}}{L_{1}}(e^{L_{1}T_{k}}-1)|\underbrace{[\mathbb{E}\{|x|_{[t_{k}, t_{k+1})} \mathbbm{1}_{s, p}\}]}_{X_{[k, k+1)}}| \nonumber\\
& \leq  \frac{L_{2}}{L_{1}}(e^{L_{1}T^{*}}-1)\Big(|X_{[k, k+1)}|+e^{L_{1}T^{*}}P_{\infty} |X_{[k-1, k)}|+ \cdots \nonumber \\
& \quad + \big(e^{L_{1}T^{*}}P_{\infty}\big)^{k} |X_{[0, 1)}|\Big)+\Big(e^{L_{1}T^{*}}P_{\infty}\Big)^{k+1}|W_{0}| \nonumber \\
&\leq \frac{L_{2}}{L_{1}}(e^{L_{1}T^{*}}-1)\sum_{i=0}^{\infty}\Big(e^{L_{1}T^{*}}P_{\infty}\Big)^{i}|X_{[0, k+1)}| +\Big(e^{L_{1}T^{*}}P_{\infty}\Big)^{k+1}|W_{0}| \nonumber\\
& =\frac{L_{2}}{L_{1}} \frac{e^{L_{1}T^{*}}-1}{1-e^{L_{1}T^{*}}P_{\infty}}|X_{[0, k+1)}|+\Big(e^{L_{1}T^{*}}P_{\infty}\Big)^{k+1}|W_{0}|
\label{ineq: ISS-W}
\end{align}
where $T^{*}=\max_{0 \leq i \leq k}{T_k}$. Clearly, (\ref{ineq: ISS-W}) shows that the $W$ system is input to state stable with respect to $X_{[0, k+1]}$ with linear gain $\frac{L_{2}}{L_{1}}(e^{L_{1}T^{*}}-1) \frac{1}{1-e^{L_{1}T^{*}}P_{\infty}}$ if $e^{L_{1}T^{*}}P_{\infty} < 1$. 

Since $\underline{w}|e| \leq W(e) \leq \overline{w}|e|$, it is straightforward to conclude that the error dynamic system is also input to state stable in expectation as follows, 
\begin{align}
|E_{k+1}| \leq & \frac{L_{2}}{L_{1}\overline{w}}(e^{L_{1}T^{*}}-1) \frac{1}{1-e^{L_{1}T^{*}}P_{\infty}}|X_{[0, k+1)}| \nonumber \\
&+\frac{\underline{w}\Big(e^{L_{1}T^{*}}P_{\infty}\Big)^{k+1}}{\overline{w}}|E_{0}|
\label{ineq: E}
\end{align}
where $E_{k+1}\coloneqq [E_{k+1}(s_{1}, p_{1}), \ldots , E_{k+1}(s_{|S|}, p_{|\Omega_{p}|})]$ with $E_{k+1}(s_{i}, p_{j})=\mathbb{E}\{|e(t_{k+1})| \mathbbm{1}_{s_{i}, p_{j}}\}$.

Similarly, let $X_{t}(s, p) \coloneqq \mathbb{E}\{|x(t)| \mathbbm{1}_{s, p}\}$ denote the expectation of $|x(t)|$ over the set ${s, p}$. By Assumption \ref{assumption: iss} and $\gamma_{1}(s) \leq \overline{\gamma}_{1} s, \forall s > 0$, one has
\begin{align*}
X_{t}(s, p) &\leq \mathbb{E}\{\beta(|x(t_{0})|, t-t_{0})\mathbbm{1}_{s, p}\}+\overline{\gamma}_{1}\mathbb{E}\{|e|_{[t_{0}, t)} \mathbbm{1}_{s, p}\} \\
& \leq \beta(\mathbb{E}\{|x(t_{0})|\mathbbm{1}_{s, p}\}, t-t_0)+\overline{\gamma}_{1}\mathbb{E}\{|e|_{[t_{0}, t)} \mathbbm{1}_{s, p}\} \\
& = \beta(X_{0}(s,p), t-t_0)+\overline{\gamma}_{1}E_{[t_{0}, t)}(s, p)
\end{align*}
then similar to the derivation of (\ref{ineq: E}), one has 
\begin{align}
|X_{t}| \leq \beta(|X_{0}|, t-t_0)+\overline{\gamma}_{1} |E_{[t_{0}, t)}|
\label{ineq: X}
\end{align}
Consider the ISS characterizations of subsystem $X$ in (\ref{ineq: X}) and subsystem $E$ in (\ref{ineq: E}), from the well-established small gain theorem \cite{jiang1994small}, the interconnected system $X$ and $E$ is asymptotically stable if the small gain condition
$\frac{L_{2}}{L_{1}\overline{w}}(e^{L_{1}T^{*}}-1) \frac{1}{1-e^{L_{1}T^{*}}\|P_{m}\text{diag}(\theta(s,p))\|}\overline{\gamma}_{1} < 1 $ holds. 
It is easy to show that the small-gain condition leads to the sufficient condition in (\ref{eq: bound_T}). Since $\mathbb{E}\{|x(t)|\} \leq |X_{t}|, \forall t \geq 0$ and the subsystem $X$ is asymptotically stable, there exists a class $\mathcal{KL}$ function $\overline{\beta}(s, t)$ such that $\mathbb{E}\{|x(t)|\} \leq \overline{\beta}(|x(0)|, t)$. The proof is complete.
\end{IEEEproof}
\begin{IEEEproof}[Proof of Theorem \ref{thm: as-es}]
Under the Exp-ISS assumption in Assumption \ref{assumption: iss}, by following the same argument used in proving Theorem \ref{thm: almost-sure-safety}, one can show that the networked control system $\mathcal{G}$ is exponentially stable in expectation with respect to origin, i.e., there exists a class Exp-$\mathcal{KL}$ function $\beta(s, t)=K_{1}\exp(-K_2 t)s$ such that $\forall x(0) \in \Omega_{s}$, $\mathbb{E}[|x(t)|] \leq K_{1} \exp(-K_{2}t) |x(0)|, \forall t \in \mathbb{R}_{\geq 0}$. Let $\tau' > \tau \geq 0$ denote any time instants such that $\tau \leq t < \tau'$ holds, then for any given $\epsilon > 0$ and the safe set $\Omega_{s}=\{x \in \mathbb{R}^{n_{x}+n_{c}} | |x| \leq r \}$ with $r \geq 0$, consider the following probability bound
\begin{align*}
{\rm Pr}\{& \sup_{\tau \leq t < \tau'} |x(t)| \geq \epsilon+r\} \leq {\rm Pr}\bigg\{\int_{\tau}^{\tau'} |x(t)| dt \geq \epsilon+r \bigg\} \\
& \stackrel{(a)}{\leq} \mathbb{E}\bigg\{ \int_{\tau}^{\tau'} |x(t)| dt \bigg\}/(\epsilon+r) \stackrel{(b)}{\leq} \int_{\tau}^{\tau'} \mathbb{E}\{|x(t)|\} dt /(\epsilon+r) \\
&\leq \int_{\tau}^{\tau'} K_{1} \exp(-K_2t)|x(0)| dt / (\epsilon+r) \\
&\leq \frac{K_1 |x(0)|}{K_2 \epsilon'}[\exp(-K_2 \tau)-\exp(-K_2 \tau')]
\end{align*}
where inequality $(a)$ holds due to the Markov inequality and inequality $(b)$ holds by exchanging the expectation and integration due to the measurability and boundedness of $|x(t)|$ over time interval $[\tau, \tau')$.  Let $\tau' \rightarrow +\infty$, then one has 
\begin{align*}
{\rm Pr}\{\sup_{\tau < t} |x(t)| \geq \epsilon+r \} \leq \frac{K_1 |x(0)|}{K_2 (\epsilon+r)}\exp(-K_2 \tau) \leq \frac{K_1 |x(0)|}{K_2 (\epsilon+r)}.
\end{align*}
Let $\epsilon':= \frac{K_1 |x(0)|}{K_2 (\epsilon+r)}$, and then there exists a function $\delta(\epsilon, \epsilon', r)=\frac{\epsilon' K_2 (\epsilon+r)}{K_1}$ such that 
\begin{align*}
{\rm Pr}\{\sup_{\tau \leq t} |x(t)| \geq \epsilon+r \} \leq \epsilon',  \forall |x(0)| \leq \delta(\epsilon, \epsilon', r).
\end{align*}
Since $\tau \geq 0$ is arbitrarily chosen, by taking $\tau \rightarrow +\infty$, the networked system $\mathcal{G}$ is almost surely asymptotically stable due to 
\begin{align*}
\lim_{\tau \rightarrow \infty}{\rm Pr}\{\sup_{\tau \leq t} |x(t)| \geq \epsilon+r \} \leq \lim_{\tau \rightarrow \infty} \frac{K_1 |x(0)|}{K_2 (\epsilon+r)}\exp(-K_2 \tau)=0.
\end{align*}
The proof is complete.
\end{IEEEproof}
\begin{IEEEproof}[Proof of Theorem \ref{thm: practical-stability-in-expectation}]
Following the argument and notation in the proof of Theorem \ref{thm: almost-sure-safety}, similar to inequalities (\ref{ineq: E}) and (\ref{ineq: X}) one has the interconnected systems with $|w|_{\mathcal{L}_{\infty}} \leq M_w$ defined as follows
\begin{align*}
&|E_{k+1}| \leq \frac{\underline{w}\Big(e^{L_{1}T^{*}}P_{\infty}\Big)^{k+1}}{\overline{w}}|E_{0}|+\frac{L_{2}(e^{L_{1}T^{*}}-1) }{L_{1}\overline{w}(1-e^{L_{1}T^{*}}P_{\infty})}|X_{[0, k+1)}|\\
& \quad +\frac{L_{3}(e^{L_{1}T^{*}}-1) }{L_{1}\overline{w}(1-e^{L_{1}T^{*}}P_{\infty})}|[\mathbb{E}\{|w|_{[t_{k}, t_{k+1})} \mathbbm{1}_{s, p}\}]| \\
&|X_{t}| \leq \beta(|X_{0}|, t-t_0)+\overline{\gamma}_{1} |E_{[t_{0}, t)}|+|[\mathbb{E}\{\gamma_{2} |w|_{[t_{k}, t_{k+1})}\mathbbm{1}_{s, p}\}]|
\end{align*}
Since the small gain condition holds for any transmission time interval $T^{*} \leq \tau^{*}$ where $\tau^{*}$ is defined in (\ref{eq: bound_T}), one can apply the argument in \cite{jiang1994small} to conclude that the composite system state $C_{k} \coloneqq [E_{k}, X_{k}]$ is input to state stable with respect to $w$, i.e., there exists a class $\mathcal{KL}$ function $\overline{\beta}(\cdot, \cdot)$ and a class $\mathcal{K}$ function $\kappa(\cdot)$ such that
$
|C_{k}| \leq \overline{\beta}(|C_{0}|, kT^{*}) + \kappa(M_w)
$.
Given a safe set $\Omega_{s} \coloneqq \{x \in \mathbb{R}^{n}| |x| \leq r \}$, then for any $\epsilon > 0$, the stochastic safety in probability can be characterized as
\begin{align*}
{\rm Pr}\{|x(t)| \geq r+\epsilon\} &\leq \frac{\mathbb{E}(|x(t)|)}{r+\epsilon} \leq \frac{|S||\Omega_{p}||C_{k}|}{r+\epsilon}\\
&\leq |S||\Omega_{p}|\frac{\overline{\beta}(|C_{0}|, kT^{*}) + \kappa(M_w)}{r+\epsilon}
\end{align*}
The first Inequality holds due to the Markov's inequality. The second inequality holds because $\mathbb{E}(|x(t)|)=\sum_{s \in S, p \in \Omega_{p}}X_{t}(s, p) \leq |S||\Omega_{p}||X_{t}|$ and $|X_{t}| \leq |C_{t}|$. One thus has
$
\lim_{t \rightarrow \infty}{\rm Pr}\{|x(t)| \geq r+\epsilon\} \leq |S||\Omega_{p}| \frac{\kappa(M_w)}{r+\epsilon}
$. The proof is complete.
\end{IEEEproof}
\begin{IEEEproof}[Proof of Lemma \ref{lem: maximum-gap}]
\subsubsection{Proof of First Part}: Let $\Delta_{i}=A_{i}Z+B_{i}-\tilde{G}_{i}^{-}(Z)$ denote the gap and $\Delta_{ij}=\overline{A}_{ij}Z+\overline{B}_{ij}-\tilde{G}_{ij}^{-}(Z)$ denote the gap for each term in $\tilde{G}_{i}^{-}(Z)$ where $\overline{A}_{ij}:=a_{ij}A_{ij}[b_{ij1}, \ldots, b_{ijn}], \overline{B}_{ij}:=a_{ij}B_{ij}$ and $\tilde{G}_{ij}^{-}(Z):=a_{ij}\exp{\sum_{l=1}^{n}b_{ijl}z_{l}}, \forall j \in L_{i}^{-}$. Since
\begin{align*}
A_{i}&=\sum_{j \in L_{i}^{-}}\overline{A}_{ij}, \quad B_{i}=\sum_{j \in L_{i}^{-}}\overline{B}_{ij} \\
\tilde{G}_{i}^{-}(Z)&=\sum_{j \in L_{i}^{-}}\tilde{G}_{ij}^{-}(Z), \quad \Delta_{i}=\sum_{j \in L_{i}^{-}} \Delta_{ij}
\end{align*}
and
$
\Delta_{i}^{*}:=\max_{Z \in \Omega_{Z}}\Delta_{i}=\max_{Z \in \Omega_{Z}}(A_{i}Z+B_{i}-\tilde{G}_{i}^{-}(Z)) \leq \sum_{j \in L_{i}^{-}} \max_{Z \in \Omega_{Z}}\Delta_{ij}:= \sum_{j \in L_{i}^{-}}\Delta_{ij}^{*}
$, one can evaluate the maximum gap between the posynomial function and its linear approximation by examining the maximum gap for each term in the posynomial function. Specifically, the maximum point in $\Delta_{ij}$ can be obtained by
$
Z^{*}=\arg \max_{Z \in \Omega_{Z}} \Delta_{ij}(Z) \Leftrightarrow \frac{\partial \Delta_{ij}}{\partial Z}=0
$.
with 
$
\frac{\partial \Delta_{ij}}{\partial Z}=\overline{A}_{ij}-\frac{\partial \tilde{G}_{ij}^{-}(Z)}{\partial Z}, 
\frac{\partial \tilde{G}_{ij}^{-}(Z)}{\partial Z}=a_{ij}[b_{ij1}, b_{ij2}, \ldots, b_{ijn}]e^{\sum_{j \in L_{i}^{-}}b_{ijl}z_l}
$ and $\overline{A}_{ij}:=a_{ij}A_{ij}[b_{ij1}, \ldots, b_{ijn}]$.
Since
$
\sum_{j \in L_{i}^{-}}b_{ijl}z_{l}^{*}=\log{A_{ij}},
\Delta_{ij}^{*}=a_{ij}\bigg(A_{ij} (\log{A_{ij}}-1)+B_{ij}\bigg)
$
with
$
A_{ij}=\frac{\exp(Y_{ij}^{L}+\delta_{ij})-\exp{Y_{ij}^{L}}}{\delta_{ij}}, B_{ij}=\frac{(Y_{ij}^{L}+\delta_{ij})\exp(Y_{ij}^{L})-Y_{ij}^{L}\exp(Y_{ij}^{L}+\delta_{ij})}{\delta_{ij}}
$, one has
$
\Delta_{ij}^{*}=e^{Y_{ij}^{L}}\bigg(1-\Theta_{ij}+\Theta_{ij}\log(\Theta_{ij})\bigg)$ with $\Theta_{ij}=\frac{e^{\delta_{ij}}-1}{\delta_{ij}}$. 
Because $\Delta_{i}^{*} \leq \sum_{j \in L_{i}^{-}} \Delta_{ij}^{*}$, one finally has
\begin{align*}
\Delta_{i}^{*} &\leq \sum_{j \in L_{i}^{-}}e^{Y_{ij}^{L}}\bigg(1-\Theta(\delta_{ij})+\Theta(\delta_{ij})\log(\Theta(\delta_{ij}))\bigg) \\
                     &\leq |L_{i}^{-}| e^{Y_{i}^{L}}\bigg(1-\Theta(\delta_{i})+\Theta_{i}\log(\Theta(\delta_{i})\bigg)
\end{align*} 
where $e^{Y_{i}^{L}}=\max_{j \in L_{i}^{-}} e^{Y_{ij}^{L}}$ and $\Theta(\delta)=\frac{e^{\delta}-1}{\delta}$. The second inequality holds because $\Theta(\delta)$ is a monotonically increasing function with respect to any $\delta \in \mathbb{R}_{\geq 0}$ and $\Theta(\delta_{ij}) \leq \Theta(\delta_{i})$ due to $\delta_{i}=\max_{j \in L_{i}^{-}}\delta_{ij}$. The first part of the proof is complete. 

\subsubsection{Proof of Second Part}: Note that $\Theta_{ij} \rightarrow 1 \Leftrightarrow \delta_{ij} \rightarrow 0$ and the Taylor expansion of function $\log(\Theta_{ij})$ at $\Theta_{ij}=1$ is
$
\log(\Theta_{ij})=(\Theta_{ij}-1)-\frac{1}{2}(\Theta_{ij}-1)^{2}+\frac{1}{3}(\Theta_{ij}-1)^{3}-\cdots
$,
then
$ 1-\Theta_{ij}+\Theta_{ij}\log(\Theta_{ij})=(\Theta_{ij}-1)^{2}\underbrace{(1-\frac{1}{2}\Theta_{ij})}_{>0 \ \text{around} \ \Theta_{ij}=1}
+\Theta_{ij}(\Theta_{ij}-1)^{3}\underbrace{(\frac{7}{12}-\frac{1}{4}\Theta_{ij})}_{ > 0 \ \text{around} \ \Theta_{ij}=1}+ \cdots$.
Taking the Taylor expansion for function $e^{\delta_{ij}}$ at point $0$, one further has
$
\Theta_{ij}(\delta_{ij})-1=\frac{1}{2!}\delta_{ij} +\frac{1}{3!}\delta_{ij}^{2}+\cdots
$.
Thus,  
$
\Delta_{ij}^{*} \sim \mathcal{O}(\delta_{ij}^{2})
$.
Since $\Delta_{i}^{*} \leq \sum_{j \in L_{i}^{-}}\Delta_{ij}^{*}$, then
$
\Delta_{i}^{*} \sim \mathcal{O}(\delta_{i}^{2})
$.
The second part of the proof is complete. 
\end{IEEEproof}
\begin{IEEEproof}[Proof of Theorem \ref{theorem: GGP}]
The proof is based on the perturbation analysis for the non-convex optimization problem \cite{bonnans2013perturbation}. Let the non-convex GGP problem in (\ref{opt: Z}) denote the unperturbed nominal optimization and the relaxed convex problems (\ref{opt: upper-bound}) and (\ref{opt: lower-bound}) denote the perturbed optimization defined as follows,
\begin{equation}
\label{opt: perturbed-optimization}
\begin{aligned}
& \underset{Z}{\text{minimize}} 
&& \tilde{G}_{0}(Z, u_{0})\\
& \text{subject to}
&& \tilde{G}_{i}(Z, u_{i}) \leq 0, \quad i=1, \ldots, M \\
&&& Z \in \Omega_{Z}
\end{aligned}
\end{equation}
where $u_{i}=\tilde{G}_{i}^{s}(Z)-\tilde{G}_{i}(Z), s=H, L$ represents the perturbation term and $|u_{i}| \leq \Delta_{i}^{*}(\delta_{i})$ with $\Delta_{i}^{*}$ defined in Lemma \ref{lem: maximum-gap}. Let $\nu(u)$ denote the optimal value of the perturbed optimization problem in \eqref{opt: perturbed-optimization} which is a function of $u$. By Lemma \ref{lem: maximum-gap} and \ref{lem: lower-bound}, let $\Phi(\delta)=\{u \in \mathbb{R}^{M+1} \big\vert| |u| \leq \Delta(\delta)\}$ denote a compact set with $\delta =\max_{0 \leq i \leq M}\delta_{i}$, the objective is to show how optimal solutions $Z^{*}(u)$ of the perturbed optimization problem in \eqref{opt: perturbed-optimization} and optimal value $\nu(u)$ for any $u \in \Phi(\delta)$ converge to the optimal solutions of the unperturbed problem when $\delta \rightarrow 0$. First, it is easy to show that $Z^{*}(0) = \lim_{\delta \rightarrow 0} Z^{*}(u)$ and $\nu(0)=\lim_{\delta \rightarrow 0}\nu(u)$ since $\delta \rightarrow 0 \Longrightarrow u \rightarrow 0$ and $\tilde{G}_{i}(Z, u_{i}), \forall i=0,1, \ldots, M$ is smooth with respect to both $Z$ and $u$. Furthermore, one knows that $|u| = \mathcal{O}(\delta^{2})$ as $\delta \rightarrow 0$ by Lemma \ref{lem: maximum-gap}. Thus, one can define the perturbation path, along a direction $d \in \mathbb{R}^{M+1}$, in the parameter space $\Phi(\delta)$ as
$
u(\delta)=u_{0}+\delta^{2}d+\mathcal{O}(\delta^{3})
$.
The vector $d$ characterizes the perturbation directions for the constraint functions and objective function in optimization problem in \eqref{opt: perturbed-optimization}. For example, $d=[0, 0, \ldots, \underbrace{+1}_{i+1}, \ldots, M]^{T}$ represents the positive perturbation occurring at $i^{th}$ constraint function. Note that the branch procedure defined in the branch-bound algorithm defines the perturbation direction $d$. Let $Z^{*}$ denote the optimal solution for the unperturbed problem and $\tilde{G}_{0}(Z^{*}, u_{0})$ denote the optimal value.  Let $h_{i}$ denote any feasible direction such that the directional regularity $DG_{i}(Z^{*}, u_{0})(h_{i}, d_{i}) < 0, i=1,2,\ldots, M$ holds (see Section 4.2 in  \cite{bonnans2013perturbation}). Since $u_{0}=0$ represents the unperturbed problem,  for any perturbed problem along the path $u(t)=\delta^{2}d+\mathcal{O}(\delta^{3})$, one has $|Z^{*}(u)-Z^{*}|=\mathcal{O}(\delta)$ by Theorem 4.53 in \cite{bonnans2013perturbation}. It is easy to verify that the lower and upper bounds $Z^{H^{*}}$ and $Z^{L^{*}}$ generated by the branch-bound algorithm correspond to the cases $Z^{*}(u)$ when perturbation directions $d$ are selected oppositely. 

The branch-bound algorithm is a bisection method whose data structure forms a binary tree. By (\ref{eq: Z^{H^{*}}-Z^{*}}) or (\ref{eq: Z^{*}-Z^{L^{*}}}), one can define desired optimality gap as $\delta^{*}$, then for a given initial gap $\delta_{0} > \delta^{*}$, the maximum number of bisections $Nb$ that are needed to achieve $\delta^{*}$ for a $n$-dimensional variable $Z$ is $Nb=\ceil[\Big]{\frac{\delta^{0}}{\delta^{*}}}^{n}$. Note that the relationship between binary tree depth and the number of bisections is $Nb=2^{DB}-1$. One has $DB=\log_{2}(\ceil[\Big]{\frac{\delta^{0}}{\delta^{*}}}^{n}+1)$. Since optimality gap satisfies (\ref{eq: Z^{H^{*}}-Z^{*}}) and (\ref{eq: Z^{*}-Z^{L^{*}}}), one has the final conclusion and the proof is complete. 
\end{IEEEproof}

	
	\bibliographystyle{IEEEtran}
	\bibliography{tase2017}
	\begin{IEEEbiography}[]{Bin Hu}
received the B.S. degree in
automation from Hefei University of Technology,
Hefei, China, in 2007, the M.S. degree in control
and system engineering from Zhejiang University,
Hangzhou, China, in 2010, and the Ph.D. degree
in electrical engineering from the University of
Notre Dame, Notre Dame, IN, USA in 2016. His research interests include stochastic
networked control systems, information theory, switched control systems,
distributed control and optimization, and human machine interaction.
	\end{IEEEbiography}

	\begin{IEEEbiography}[]{Yebin Wang}
		received the B.Eng. degree in
Mechatronics Engineering from Zhejiang University,
China, in 1997, M.Eng. degree in Control Theory
\& Engineering from Tsinghua University, China,
in 2001, and Ph.D. in Electrical Engineering from
the University of Alberta, Canada, in 2008. Dr.
Wang has been with Mitsubishi Electric Research
Laboratories in Cambridge, MA, USA, since 2009,
and now is a Senior Principal Research Scientist.
From 2001 to 2003 he was a Software Engineer,
Project Manager, and R\&D Manager in industries,
Beijing, China. His research interests include nonlinear control and estimation,
optimal control, adaptive systems and their applications including mechatronic
systems.
	\end{IEEEbiography}

	\begin{IEEEbiography}[]{Philips Orlik}
		was born in New York, NY in 1972.
He received the B.E. degree in 1994 and the M.S.
degree in 1997 both from the State University of
New York (SUNY) at Stony Brook. In 1999 he
earned his Ph. D. in electrical engineering also from
SUNY Stony Brook. He is currently the Group
Manager of Electronics \& Communications at Mitsubishi
Electric Research Laboratories Inc. located
in Cambridge, MA. His primary research focus is
on advanced wireless and mobile cellular communications,
sensor networks, ad-hoc networking and
UWB. Other research interests include vehicular/car-to-car communications,
mobility modeling, performance analysis, and queuing theory.
	\end{IEEEbiography}

	\begin{IEEEbiography}[]{Toshiaki Koike-Akino}
(M'05-SM'11)	received the
B.S. degree in electrical and electronics engineering,
M.S. and Ph.D. degrees in communications and
computer engineering from Kyoto University, Kyoto,
Japan, in 2002, 2003, and 2005, respectively. During
2006–2010, he has been a Postdoctoral Researcher
at Harvard University, and joined Mitsubishi Electric
Research Laboratories, Cambridge, MA, USA, since
2010. His research interest includes digital signal
processing for data communications and sensing.
He received the YRP Encouragement Award 2005,
the 21st TELECOM System Technology Award, the 2008 Ericsson Young
Scientist Award, the IEEE GLOBECOM'08 Best Paper Award in Wireless
Communications Symposium, the 24th TELECOM System Technology Encouragement
Award, and the IEEE GLOBECOM'09 Best Paper Award in
Wireless Communications Symposium.
\end{IEEEbiography}

	\begin{IEEEbiography}[]{Jianlin Guo}
	is a Senior Principal Research Scientist
at Mitsubishi Electric Research Laboratories
in Cambridge, Massachusetts, USA. He received
his Ph.D. in Applied Mathematics in 1995 from
University of Windsor, Windsor, Ontario, Canada.
His research interests include routing and resource
management in wireless IoT networks, coexistence
of the heterogeneous wireless networks, control over
wireless networks, wireless sensor networks, smart
grid networks, safety and handover in vehicular
communications, nonlinear stability of convection in
porous medium.
\end{IEEEbiography}
		
\end{document}